\documentclass[12pt]{amsart}

\usepackage{graphicx}
\usepackage{amsmath, comment}
\usepackage{amscd}
\usepackage{amsfonts}
\usepackage{amssymb}
\usepackage{xcolor}
\usepackage[normalem]{ulem}
\definecolor{MyDarkGreen}{rgb}{0.0,0.5,0.0}

\usepackage[
  hypertexnames=false,
  colorlinks,
  citecolor=MyDarkGreen,
  linkcolor=blue,
  urlcolor=black,
  pagebackref
]{hyperref}
\usepackage{mathtools}
\usepackage[all]{xy}
\usepackage{tikz-cd}
\usepackage[capitalise]{cleveref}
\usepackage{enumitem}
\usepackage{xparse} 

\usepackage[lining,tabular,scale=.9]{FiraSans}
\usepackage[tt=false,semibold]{libertine}
\usepackage[libertine,timesmathacc,smallerops]{newtxmath}
\usepackage[lining,scaled=.8]{FiraMono}
\usepackage[cal=boondox,frak=euler]{mathalpha}

\numberwithin{equation}{section}
\newtheorem{theorem}{Theorem}[section]
\newtheorem{lemma}[theorem]{Lemma}

\newtheorem{proposition}[theorem]{Proposition}

\newtheorem{claim}[theorem]{Claim}
\newtheorem{corollary}[theorem]{Corollary}

\newtheorem{question}[theorem]{Question}

\theoremstyle{remark}

\newtheorem{remark}[theorem]{Remark}
\newtheorem{definition}[theorem]{Definition}

\newcommand{\ol}{\overline}

\newcommand{\eps}{\varepsilon}
\newcommand{\vphi}{\varphi}

\newcommand{\Ad}{\operatorname{Ad}}
\newcommand{\im}{\operatorname{im}}

\newcommand{\Gal}{\operatorname{Gal}}
\newcommand{\alg}{\operatorname{alg}}
\newcommand{\Aut}{\operatorname{Aut}}

\newcommand{\End}{\operatorname{End}}
\newcommand{\PGL}{\operatorname{PGL}}

\newcommand{\wt}{\operatorname{wt}}

\newcommand{\Br}{\operatorname{Br}}

\newcommand{\SL}{\operatorname{SL}}

\newcommand{\SO}{\operatorname{SO}}
\newcommand{\Spin}{\operatorname{Spin}}
\newcommand{\HSpin}{\operatorname{HSpin}}

\newcommand{\diag}{\operatorname{diag}}

\newcommand{\ind}{\operatorname{ind}}

\newcommand{\Spec}{\operatorname{Spec}}
\newcommand{\Span}{\operatorname{Span}}

\newcommand{\rank}{\operatorname{rank}}

\newcommand{\Char}{\operatorname{char}}

\newcommand{\lin}{\operatorname{lin}}

\newcommand{\on}{\operatorname}

\newcommand{\depth}{\operatorname{depth}}
\newcommand{\Glin}{G^{\mathrm{sm}, \, \lin}}

\newcommand{\bF}{\mathbb{F}}

\newcommand{\bZ}{\mathbb{Z}}

\begin{document}
\author{Danny Ofek}   
\address{Department of Mathematics\\
	University of British Columbia\\
	Vancouver, BC V6T 1Z2\\Canada}
    \email{dannyofe@math.ubc.ca}
\thanks{Danny Ofek was supported by the Natural Sciences and Engineering Research Council of Canada (NSERC), (reference number 601234).}

\author{Zinovy Reichstein}
\address{Department of Mathematics\\
	University of British Columbia\\
	Vancouver, BC V6T 1Z2\\Canada}
 \email{reichst@math.ubc.ca}
\thanks{{Zinovy Reichstein was partially supported by 
        an Individual Discovery Grant RGPIN-2023-03353 from the
	Natural Sciences and Engineering Research Council of Canada.}
}

\author{Federico Scavia}
\address{CNRS\\
	Institut Galil\'ee\\
	Universit\'e Sorbonne Paris Nord\\
	99 avenue Jean-Baptiste Cl\'ement, 93430\\ 
	Villetaneuse, France}
\email{scavia@math.univ-paris13.fr}

\subjclass[2020]{20G07, 20G10, 20G41, 12J10}

%
%

\keywords{algebraic group, maximal torus, abelian subgroup, torsor, splitting field, Grothendieck torsion index, torsion prime, iterated Laurent series field, splitting by genus $1$ curve}

\title{Finite abelian subgroups of algebraic groups}

\begin{abstract}  Let $k$ be an algebraically closed field, and let $G$ be an algebraic $k$-group. We study finite abelian $k$-subgroups $A \subset G$ whose order is not divisible by the characteristic of $k$. This is a classical topic in the theory of algebraic groups going back to the work of Borel in the early 1960s. 
We sharpen previously known results on the structure of $A$. 
In particular, we show that there exists a maximal torus $T$ of $G$ such that the index $[A: (A \cap T)]$ divides the Grothendieck torsion index $t(G)$. We also show that there exists a maximal torus $T$ such that the quotient group $A/(A \cap T)$ is ``small'' in a suitable sense. As applications of these results, we (i) give a positive answer to a question of Totaro for $G$-torsors over 
fields $k_r = k(\!(t_1)\!)(\!(t_2)\!) \ldots (\!(t_r)\!)$ of iterated Laurent series, (ii) prove a variant of the ``hypoth\`ese optimiste'' of Tits about splitting fields of $E_8$-torsors, and (iii) show that certain torsors over $k_r$ cannot be split by the function field of a genus $1$ curve.
\end{abstract}
 
\maketitle
\tableofcontents

\section{Introduction} 

Let $k$ be an algebraically closed field, and let $G$ be an algebraic $k$-group, that is, a group scheme of finite type over $k$. In this paper we study the finite abelian $k$-subgroups of $G$. A maximal torus of $G$ contains many abelian $k$-subgroups, called \emph{toral}. 
We are interested in ``how far'' an arbitrary finite abelian $k$-subgroup $A$ of $G$ is from being toral. This is a classical topic in the theory of algebraic groups, going back to the work of Borel~\cite{borel-tohoku}. Borel studied the closely related setting where $G$ is replaced by a connected compact Lie group $C$. A prime number $p$ is called a torsion prime of $C$ if the integral cohomology ring $H^*(C, \mathbb Z)$ has non-trivial $p$-torsion elements. Borel showed that $p$ is a torsion prime for $C$ if and only if $H^*(BC, \mathbb Z)$ has $p$-torsion if and only if $C$ has a non-toral elementary abelian $p$-group; see~\cite[Theorem 4.5]{borel-tohoku}. 

\subsection{The Grothendieck torsion index}

Recall that, for a field $K$ and a $K$-scheme of finite type $X$, the index $\ind(X)$ is the greatest common divisor of the degrees of the closed points of $X$ over $K$, or equivalently of the degrees $[L:K]$ of all finite field extensions $L/K$ such that $X(L) \neq \emptyset$.

Suppose that $G$ is an affine algebraic group defined over $k$. The \emph{Grothendieck torsion index} $t(G)$ of the algebraic $k$-group $G$ is defined as follows: 
\begin{equation}\label{eq:t}
    t(G) \coloneqq \mathrm{lcm}\{\ind(X)\mid \text{$K/k$ a field extension, $X$ a $G$-torsor over $K$}\}.
\end{equation} 
Grothendieck~\cite{grothendieck-special} showed that $t(G)$ is a well-defined positive integer. Moreover, if $G$ is a complex reductive group and $C$ is its maximal compact subgroup, then the prime factors of $t(G)$ are exactly the torsion primes of $C$. For a more detailed discussion of the Grothendieck torsion index and torsion primes, see Section~\ref{sect.prelim}. 

For a finite abelian $k$-subgroup $A$ of $G$, we let $|A|$ be the order of $A$, that is, the dimension of the finite-dimensional $k$-vector space $\mathcal{O}(A)$; for a maximal torus $T$ of $G$, we let $A_T\coloneqq A\cap T$. When $\Char(k)=0$, the second author and Youssin showed that every finite abelian $p$-subgroup $A \subset G$ has a toral subgroup $A_T$ of index dividing $t(G)$; see \cite[Theorem 1.1]{reichstein-youssin2}. In particular, if $p$ is not a torsion prime, then $A = A_T$, i.e., $A$ itself is toral, and one recovers the above-mentioned result of Borel. In this paper we strengthen \cite[Theorem 1.1]{reichstein-youssin2} as follows.

\begin{theorem} \label{thm.t(G)} Let $k$ be an algebraically closed field, let $G$ be an affine $k$-group and let $A \subset G$ be a finite abelian $k$-subgroup such that $\Char(k)\nmid |A|$. Then there exists a maximal torus $T$ of $G$ such that $[A:A_T]$ divides the Grothendieck torsion index $t(G)$.
\end{theorem}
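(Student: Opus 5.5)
Following the strategy of \cite{reichstein-youssin2}, I will use a generic torsor argument over an iterated Laurent series field. First I reduce to $G$ connected reductive. If $G$ is not connected, replace it by $G^\circ$ as far as tori go: maximal tori of $G$ lie in $G^\circ$. I expect $t(G)$ as defined in \eqref{eq:t} to control the components as well. Indeed, $G\to G/G^\circ$ induces torsors under the finite group $\pi_0(G)$, which have index divisible by the relevant orders. Likewise, quotienting by the unipotent radical $R_u(G)$ changes neither $t(G)$ nor the problem: $A$ has order prime to $\Char k$, so $A$ maps isomorphically into $G/R_u(G)$, and maximal tori lift. From now on I assume $G$ reductive, and I handle the component group separately at the end.

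Write $A\cong \mu_{n_1}\times\cdots\times\mu_{n_r}$, which is possible since $\Char(k)\nmid |A|$. Let $K=k_r=k(\!(t_1)\!)\cdots(\!(t_r)\!)$. Each $t_i$ defines a class in $H^1(K,\mu_{n_i})$ by Kummer theory, so we get a $\mu$-torsor and hence an $A$-torsor $\tau\in H^1(K,A)$. Push it forward along $A\hookrightarrow G$ to obtain a $G$-torsor $X$ over $K$. By the definition of $t(G)$, the index $\ind(X)$ divides $t(G)$. So there are finite extensions $L_j/K$ splitting $X$ with $\gcd_j[L_j:K]$ dividing $t(G)$. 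Since $K$ is henselian with residue field $k$ algebraically closed, every finite extension $L/K$ of degree prime to $\Char k$ is again an iterated Laurent series field. Its value group $\Gamma_L\supset\Gamma_K=\bZ^r$ has index $[L:K]$, at least in the tamely ramified case. Each $p$-part can then be treated separately using one $L$ with $p$-part of $[L:K]$ minimal.

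The key step, and the main obstacle, is this. Suppose the image of $\tau$ in $H^1(L,G)$ is trivial. Then I want a maximal torus $T$ such that $A_T$ contains the subgroup $A'$ of $A$ corresponding to $\Gamma_L$ under the natural pairing. Concretely, $A'$ is the image of $\mathrm{Hom}(\Gamma_K/\ldots)$, with $[A:A']$ dividing $[\Gamma_L:\Gamma_K]=[L:K]$. The tool I would use is the description of $H^1$ of a reductive group over $k_r$: torsors come from finite abelian subgroups, and triviality forces the corresponding subgroup to lie in a torus. This would be proved inductively on $r$, using Bruhat–Tits theory or the Chernousov–Gille–Reichstein result that $H^1(k_r,G)$ is the image of $H^1(k_r,N)$ for $N$ a normalizer of a torus. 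I will restrict $\tau$ to $L$, where it becomes the torsor attached to $A'\subset A$ via the new uniformizers. Triviality of this torsor should then force $A'$ into some maximal torus. For this I would invoke a ``toral iff split'' criterion: an abelian subgroup whose associated Laurent torsor is trivial is toral. This I would prove by reduction to the connected centralizer $C_G(A'')^\circ$ and induction on $r$.

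Finally, combine the primes. Since $A$ is abelian, $A=\prod_p A_p$. Choose $T$ working simultaneously for all $p$. To do this, pass to the centralizer of the toral part: $A_T$ is contained in a torus $S$, so we may work in $C_G(S)$. This is reductive and its torsion index divides $t(G)$, a standard fact about Levi-type subgroups that I expect to need. Iterating yields a single $T$ with $[A:A_T]$ dividing $t(G)$.
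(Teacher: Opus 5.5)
Your per-prime argument matches the paper's: the connected $A$-torsor $E$ over $k_r$ satisfies $\ind(E*_AG)\mid t(G)$, and a splitting field $L$ forces $\vphi(\Gal(L))$ into a torus. The step you flag as the main obstacle is actually the easy one. The paper proves the ``toral iff split over $k_r$'' criterion (Proposition~\ref{prop.special_case}) for arbitrary, even disconnected, $G$. It extends the $A$-equivariant map $\Spec(K)\to G/B$ over the $A$-stable valuation ring of $K$, which produces an $A$-fixed point of $G/B$.

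\textbf{The gap is your last paragraph.} This is precisely where the theorem goes beyond the $p$-group case of \cite{reichstein-youssin2}. For each prime $p$ you obtain a torus $T_p$ such that $A'_p=A_p\cap T_p$ has small index in $A_p$. You then need a single torus containing $\prod_p A'_p$, i.e.\ that commuting toral subgroups of pairwise coprime orders generate a toral subgroup. Your mechanism does not give this, for three reasons.
\begin{itemize}
\item For a torus $S\supset A_T$, the Levi subgroup $C_G(S)$ contains $A$ only if $S\subset C_G(A)$. In the essential case where $C_G(A)$ is finite (e.g.\ every maximal finite abelian subgroup of $E_8$), this forces $S=1$, so nothing is gained.
\item The subgroup that does contain $A$ is $C_G(A_T)$. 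But this is a pseudo-Levi subgroup, possibly disconnected, with no injectivity of $H^1(K,C_G(A_T))\to H^1(K,G)$ available to compare indices. Its torsion index need not divide $t(G)$. For example, $t(\Spin_{11})=2$, while the centralizer $(\SL_4\times\operatorname{Sp}_4)/\mu_2$ of a semisimple element of $\Spin_{11}$ has torsors inducing division biquaternion algebras, so its torsion index is divisible by $4$.
\item Even inside $C_G(A'_{p_1})^\circ$, controlling the $p_2$-part is the same problem again.
\end{itemize}

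What is missing is the paper's Proposition~\ref{prop1}: if $A=A_1\times A_2$ with $\gcd(|A_1|,|A_2|)=1$ and both $A_i$ toral, then $A$ is toral. Its proof runs as follows.
\begin{itemize}
\item Reduce to $G$ reductive.
\item By Steinberg's theorem applied to $C_G(A_1)^\circ$, pick a maximal torus $T\supset A_1$ normalized by $A_2$.
\item By Proposition~\ref{prop.special_case}, it suffices to show that $i_*(\alpha_1,\alpha_2)$ is trivial in $H^1(K,G)$ for all $K$.
\item Twisting by $\alpha_2$ reduces this to the vanishing of a class in $H^1(K,{}_{\alpha_2}T)$. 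That class is split by extensions of degrees dividing $|A_1|$ and $|A_2|$, hence trivial by the positive answer to Serre's question for tori.
\end{itemize}
This yields a torus with $[A:A_T]=\depth(A)$ (Corollary~\ref{cor.of_prop1}), and $\depth(A)=\ind(E*_AG)$ divides $t(G)$.

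A smaller issue: you cannot simply replace $G$ by $G^\circ$, since $A$ need not lie in $G^\circ$. The promised separate treatment of the component group never appears. The valuative-criterion proof avoids this issue entirely.
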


Theorem~\ref{thm.t(G)} answers a question of Totaro and Wang. This question was motivated by applications to the Mori Program; see~\cite{wang-jiahe}. The main novel aspect of Theorem~\ref{thm.t(G)} -- the one the application in~\cite{wang-jiahe} and most of the applications in this paper rely on -- is that $A$ is not required to be a $p$-group.  Note also that the proof of~\cite[Theorem 1.1]{reichstein-youssin2} uses resolution of singularities and thus is only valid in characteristic $0$. Here we allow  $\Char(k)$ to be positive, as long as it does not divide $|A|$. We will deduce Theorem~\ref{thm.t(G)} as a corollary to part (1) of Theorem~\ref{thm.main} below; see Section~\ref{sect.t(G)}.

\subsection{Torsors over fields of iterated Laurent series}
For any $r \geqslant 0$, we denote the field of iterated Laurent series in the variables $t_1, \ldots, t_r$ with coefficients in $k$ by  $k_r = k(\!(t_1)\!)\dots (\!(t_r)\!)$. 

\begin{theorem}\label{thm.main}
Let $k$ be an algebraically closed field, and let $G$ be an algebraic $k$-group. Let $A \subset G$ be a finite abelian $k$-subgroup such that $\Char(k) \not\mid |A|$, let $r\geqslant 0$ be an integer, let $E$ be a connected $A$-torsor over $k_r$, and let $E*_AG$ be the $G$-torsor over $k_r$ induced by $E$. Suppose a finite field extension $L/k_r$ splits $E*_AG$. 

 \begin{enumerate}
 \item There exists a maximal torus $T \subset G$ such that the index
 $[A: A_T]$ divides $[L : k_r]$.

 \item Assume further that there is a tower $k_r \subset  K \subset L$ of field extensions such that $[K : k_r]$ is prime to $|A|$ and $L/K$ is Galois. Then there exists a maximal torus $T \subset G$ such that (i) $A/A_T$ is isomorphic to a quotient of $\Gal(L/K)$ and (ii) $A/A_T$ is isomorphic to a subgroup of $\Gal(L/K)$. 
 \end{enumerate}
 \end{theorem}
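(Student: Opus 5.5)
Throughout, $\Gamma_r \coloneqq \Gal(k_r^{\mathrm{sep}}/k_r)$; the tame part of $\Gamma_r$ is abelian, topologically generated by $r$ commuting elements, and equals $\widehat{\mathbb Z}'^{\,r}$ (the prime-to-$p$ completion of $\mathbb{Z}^r$). Since $\Char(k)\nmid |A|$, the $A$-torsor $E$ over $k_r$ corresponds to a continuous homomorphism $\phi\colon \Gamma_r \to A$ factoring through this tame quotient. Connectedness of $E$ means $\phi$ is surjective. Thus $E$ is split by the abelian Galois extension $M/k_r$ with group $A$, and $E*_AG$ corresponds to the class of $\phi$ composed with $A\hookrightarrow G(k)$. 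The plan is to understand what it means for $L$ to split $E*_AG$ in terms of $\phi$ and then use the structure of $k_r$ to produce a torus.

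\textbf{Step 1: reduce to a Galois extension.} Replace $L$ by a tower: the inertia and ramification structure of finite extensions of $k_r$ is computed iteratively. A splitting $L$ of degree $n$ gives, after passing to its maximal tamely ramified-and-relevant part, an extension whose Galois closure is abelian and tame. For (1) we may restrict $\phi$ to $\Gamma_L\coloneqq\Gal(k_r^{\mathrm{sep}}/L)$. The torsor $E_L*_AG$ is trivial, and $E_L$ is an $A$-torsor over $L$ whose image $\phi(\Gamma_L)=:B$ has index $[A:B]$ dividing $[L:k_r]$. The divisibility follows because $\Gamma_L$ has index $[L:k_r]$ in $\Gamma_r$ and $\phi$ is surjective onto an abelian group. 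Concretely, $L\cap M$ has degree $[A:B]$, which divides $[L:k_r]$.

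\textbf{Step 2: the key claim.} Over $L$, which is again a field of iterated Laurent series (up to the wild part, which is irrelevant since $|B|$ is prime to $p$), the torsor $E_L$ is a connected $B$-torsor whose induced $G$-torsor is trivial. The claim is that this forces $B$ to be contained in a maximal torus. This is the main obstacle. My approach is induction on $r$ via the residue map. A $G$-torsor over $k_r=k_{r-1}(\!(t_r)\!)$ coming from a loop $\gamma\in\Gamma$ is trivial only if the "residue" is conjugate into $G$ appropriately. Concretely, I would use the Bruhat–Tits/Gille description: $H^1(k_r,G)$ is classified by conjugacy classes of commuting $r$-tuples of finite-order elements up to the action of $G$, i.e. $H^1(k_r,G)\cong \mathrm{Hom}(\widehat{\mathbb Z}'^{\,r},G)/G$ for the tame part. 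Under this bijection, triviality means the $r$-tuple $(\phi(\sigma_1),\dots,\phi(\sigma_r))$ is conjugate in $G$ to one generating a toral group. More precisely, it is conjugate to the trivial cocycle after twisting by loops in $T(k_r)$. A trivial torsor is represented by the image of $\mathrm{Hom}(\widehat{\mathbb Z}'^{\,r},T)$ modulo $N(T)$, because cocycles differing by a coboundary from $G(k_r)$ correspond, via valuation on $T(k_r)\cong T(k)\times X_*(T)^r$-type decomposition, to commuting tuples in a common torus. I would try first to prove that $\mathrm{Hom}(\widehat{\mathbb Z}'^{\,r},G)/G\to H^1(k_r,G)$ is injective and that the trivial class contains exactly the toral tuples. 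I would attempt this by induction on $r$, using Hensel-type lifting over the complete field $k_{r-1}(\!(t_r)\!)$ to pass to centralizers $Z_G(\phi(\sigma_r))$, which are reductive of smaller complexity.

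\textbf{Step 3: conclude (1) and (2).} Given the claim, $B=\phi(\Gamma_L)\subset T$ for some maximal torus, so $B\subset A_T$ and $[A:A_T]$ divides $[A:B]$, which divides $[L:k_r]$. For (2), apply the same argument with $K$ in place of $k_r$. Since $[K:k_r]$ is prime to $|A|$, $\phi|_{\Gamma_K}$ is still surjective onto $A$. Then $A/B$ is a quotient of $\Gamma_K/\Gamma_L=\Gal(L/K)$, giving (i). For (ii), $A/A_T$ is a quotient of the finite abelian group $\Gal(L/K)^{\mathrm{ab}}$. It is therefore isomorphic to a subgroup of $\Gal(L/K)^{\mathrm{ab}}$ by duality of finite abelian groups. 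To realize it as a subgroup of $\Gal(L/K)$ itself, I would use that the image of the tame Galois group of $K$ in $\Gal(L/K)$ is an abelian subgroup surjecting onto $A/B$, via the tame abelian structure. Picking a suitable abelian subgroup of $\Gal(L/K)$ containing the tame quotient gives the embedding.
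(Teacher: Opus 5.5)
\textbf{Gap: Step 2 is unproved, and the route sketched for it rests on a false statement.} Your overall architecture matches the paper's. You restrict $\phi$ to $\Gamma_L$, use that $L$ is again isomorphic to $k_r$ over $k$, show that $B=\phi(\Gamma_L)$ is toral, and read off (1) and (2)(i) from $[A:A_T]\mid[A:B]$ and the surjectivity of $\phi|_{\Gamma_K}$. But the step that carries all the content is Step 2: a connected $B$-torsor over $k_r$ whose induced $G$-torsor is split forces $B$ to be toral. You do not prove it.

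\emph{The bijection is false.} $H^1(k_r,G)$ is not in bijection with $\mathrm{Hom}(\widehat{\mathbb Z}'^{\,r},G(k))/G(k)$, and that map is not injective. Already for $r=1$ and $G=\mathbb G_m$, distinct roots of unity give distinct classes of homomorphisms, while $H^1(k(\!(t)\!),\mathbb G_m)=0$ by Hilbert 90. Your two intended claims (injectivity, and ``the trivial class consists exactly of the toral tuples'') are in fact mutually incompatible once $G$ has a nontrivial torus and $r\geqslant 1$.

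\emph{The valuation justification begs the question.} The trivializing element lives in $G(k_r^{\mathrm{sep}})$, not in a torus, so it cannot be analyzed through $T(k_r)$. Showing that the cocycle $\sigma\mapsto g^{-1}\sigma(g)$ nevertheless has toral image is exactly the point.

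\emph{The induction is not carried out and does not cover the theorem.} The induction on $r$ via Hensel lifting to centralizers presupposes $G$ reductive. The theorem allows an arbitrary algebraic $k$-group, possibly non-affine, non-smooth or disconnected. The centralizer of $\phi(\sigma_r)$ also need not be ``smaller'', for instance if $\phi(\sigma_r)$ is central.

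\textbf{The missing idea is a fixed-point argument on a proper homogeneous space.} Let $B_0$ be a Borel subgroup of the smooth linear part $\Glin$; then $G/B_0$ is proper over $k$. A trivialization of $E*_AG$ yields an $A$-equivariant morphism $E=\Spec(M)\to G/B_0$. The rank-$r$ valuation on $k_r$ is Henselian, so it extends uniquely to $M$. Hence its valuation ring $V$, which has residue field $k$, is $A$-stable. By the valuative criterion, the morphism extends to an $A$-equivariant map $\Spec(V)\to G/B_0$. The image of the closed point is an $A$-fixed $k$-point $gB_0$, so $A\subset gB_0g^{-1}$. A finite abelian subgroup of order prime to $\Char(k)$ in a connected solvable group lies in a maximal torus.

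\textbf{Minor point in (2)(ii).} ``The image of the tame Galois group of $K$ in $\Gal(L/K)$'' does not make sense as stated, since the tame group is a quotient of $\Gal(K)$, not a subgroup. What you need is the following. Since $K\cong k_r$ and $\Gal(k_r)$ is an extension of $\widehat{\mathbb Z}'(1)^r$ by a pro-$l$ group, where $l$ is the characteristic exponent, one gets $\Gal(L/K)=\mathcal P\rtimes\mathcal A$. Here $\mathcal P$ is a normal $l$-group and $\mathcal A$ is abelian of prime-to-$l$ order. The surjection $\Gal(L/K)\to A/A_T$ kills $\mathcal P$, so it restricts to a surjection from $\mathcal A$. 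Then $A/A_T$ embeds in $\mathcal A$ by the structure theorem for finite abelian groups.
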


Since $A$ is finite \'etale over $k$, the $A$-torsor $E$ is connected if and only if $E=\Spec(M)$, where $M$ is a Galois field extension of $k_r$ with Galois group $A(k)$.

In the case where $G = \PGL_{p^r}$, $A = \mu_p^{2r}$, part (2) of Theorem~\ref{thm.main} reduces to~\cite[Proposition 2.1]{genus1} and, if one further assumes that $K = k_r$, to \cite[Theorem 5]{amitsur91}. 

\subsection{Variants of the torsion index}
Let $G$ be an algebraic $k$-group. For any finite abelian $k$-subgroup $A\subset G$ we define the \emph{depth} of $A$ as the greatest common divisor of the indices $[A:A_T]$, where $T$ ranges over the maximal tori of $G$.
Suppose that $G$ is affine. In view of Theorem~\ref{thm.main}, it is natural to define the variants $t_2(G)$ and $t_3(G)$ of the Grothendieck torsion index as follows:
\begin{align}
t_2(G) &\coloneqq \mathrm{lcm}\{\mathrm{ind}(X)\mid r\in \mathbb{Z}_{>0},\ 
X \text{ a } G\text{-torsor over } k_r\}, \label{eq:t2}\\
t_3(G) &\coloneqq \mathrm{lcm}\{\mathrm{depth}(A)\mid
A\subset G \text{ finite abelian, $\Char(k)\nmid |A|$}\}. \label{eq:t3}
\end{align}
We have $t_3(G) \, | \, t_2(G)$ by Theorem~\ref{thm.main}(1) and
$t_2(G) \, | \, t(G)$ by the definition of $t(G)$.
We will show that, in fact, under a mild assumption on $\Char(k)$, $t_2(G)$ is always equal to $t_3(G)$; see Proposition~\ref{prop.torsion-a}. We will use this fact to prove the following.

\begin{theorem} \label{thm.torsion-b} Let $k$ be an algebraically closed field of characteristic zero, and let $G$ be an exceptional simple $k$-group of type $E_8$. Then $t_2(G) =  60$.
\end{theorem}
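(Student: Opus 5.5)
Since $\Char(k)=0$, Proposition~\ref{prop.torsion-a} gives $t_2(G)=t_3(G)$, so it suffices to show $t_3(G)=60$. This amounts to an upper bound and a lower bound. We use the classical fact that $t(E_8)=2^6\cdot 3^2\cdot 5$ (Totaro). Together with $t_2(G)\mid t(G)$, this shows that $t_3(G)$ has prime factors only among $2,3,5$, with $v_2\leqslant 6$, $v_3\leqslant 2$, $v_5\leqslant 1$. The plan is to prove $v_2(t_3)=2$, $v_3(t_3)=1$ and $v_5(t_3)=1$.

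\emph{Lower bounds.} It suffices to exhibit, for each $p\in\{2,3,5\}$, a finite abelian subgroup $A\subset E_8$ whose depth is divisible by $4$, $3$, $5$ respectively.
\begin{itemize}
\item For $p=5$ we take the non-toral elementary abelian subgroup $(\bZ/5)^3\subset E_8$ of Griess. Every toral subgroup of $A$ has centralizer of positive dimension. A Borel-type argument (the rank-$3$ group is ``rigid'') then shows that $A\cap T$ has index divisible by $5$ for every $T$.
\item For $p=3$ we use the non-toral $(\bZ/3)^3$ (or $(\bZ/3)^5$) in $E_8$, arguing the same way.
\item For $p=2$ we need $[A:A_T]$ divisible by $4$ for all $T$. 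A natural candidate is a $2$-subgroup $(\bZ/2)^9$ or $(\bZ/2)^5$ coming from the Adams--Griess classification of elementary abelian $2$-subgroups of $E_8$. Alternatively, one could take a subgroup of $\Spin_{16}/\mu_2\subset E_8$ whose maximal toral subgroups all have index at least $4$; computing this index via the Spin-cover obstruction seems the most concrete approach.
\end{itemize}

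\emph{Upper bounds.} Let $A\subset G$ be finite abelian. Write $A=\prod_p A_p$ as a product of its Sylow subgroups. The depth should then be computed prime-by-prime, and a single $p$-part only constrains $v_p$. I would prove that $v_p(\depth A)=v_p(\depth A_p)$ by replacing $G$ with the centralizer of the prime-to-$p$ part. For this I would choose $T$ inside the connected reductive group $C_G(A_{p'})^\circ$ and use induction on $\dim G$.

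The bound $v_5\leqslant 1$ is immediate from $v_5(t(G))=1$, so it remains to show $v_3\leqslant 1$ and $v_2\leqslant 2$. For $p=3$: the elementary abelian $3$-subgroups of $E_8$ have been classified (Griess), and every non-toral one contains a toral subgroup of index $3$. For general abelian $3$-groups, pass to the centralizer of a toral element of order $3$; these are groups such as $SL_9/\mu_3$, $E_6\times SL_3/\mu_3$, and similar. For $p=2$, similarly use the centralizers $\Spin_{16}/\mu_2$ and $E_7\times SL_2/\mu_2$, and apply induction together with the known torsion data of these smaller groups to get index dividing $4$.

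\emph{Main obstacle.} I expect the hardest step to be the $2$-primary part. There is a large gap between $t(E_8)_2=64$ and the claimed value $4$. Closing it requires a detailed analysis of abelian $2$-subgroups of $E_8$, specifically bounding how far an arbitrary such subgroup is from its largest toral subgroup. I would first attempt this via Griess's classification of maximal elementary abelian $2$-subgroups, and then extend from elementary abelian groups to general abelian groups by a Frattini-quotient argument.
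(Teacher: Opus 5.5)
\textbf{Gap: the upper bound.} Your reduction to $t_3(G)$ and the a priori bounds from $t(G)=2^6\cdot 3^2\cdot 5$ are fine. However, the real content of the theorem is not established: you must show that no finite abelian subgroup has depth with $3$-part larger than $3$ or $2$-part larger than $4$. The mechanism you propose for this cannot work. If $x\in A$ has finite order, the maximal tori of $C_G(x)$ are maximal tori of $G$, so you only learn that the depth of $A$ in $G$ divides its depth in $C_G(x)$. But $C_G(x)$ is a pseudo-Levi subgroup, not a Levi subgroup, and the depth can drop strictly when you pass back to $G$, so the torsion data of $C_G(x)$ is too weak.

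Concretely, $\SL_9/\mu_3$ is the centralizer of an element of order $3$ in $E_8$. Let $X,Z\in\SL_3$ be the shift and clock matrices. The image in $\SL_9/\mu_3$ of $\langle X\otimes 1, Z\otimes 1, 1\otimes X, 1\otimes Z\rangle$ is $(\bZ/3)^4$, and it maps isomorphically onto a subgroup of $\PGL_9$ of non-toral $3$-rank $2$. So its depth in $\SL_9/\mu_3$ is $9$, and the $3$-part of $t(\SL_9/\mu_3)$ is at least $9$. Yet by the theorem you are proving, its depth in $E_8$ divides $3$.

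The missing idea is the dichotomy of Lemma~\ref{lem.reduction_levis_and_maximals}. Either $C_G(A)$ is finite, and then $A$ lies in one of the seven maximal finite abelian subgroups classified by Draper and Elduque, whose depths must be bounded one by one (Proposition~\ref{prop.torsion-d}). Or $C_G(A)^\circ$ contains a non-trivial torus $S$, and then $A$ lies in the Levi subgroup $C_G(S)$ of a proper parabolic, which can be enlarged to the Levi subgroup $L_P$ of a maximal parabolic. For Levi subgroups the depth does not change: $H^1(K,L_P)\to H^1(K,G)$ is injective, and Lemma~\ref{lem.index_equals_depth}(2) identifies the depth with the index of the induced torsor. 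Moreover $t(L_P)\mid t(L_P')\mid 12$ by inspecting the eight maximal Levi subgroups. Using centralizers of tori rather than of finite-order elements is exactly what makes this reduction lossless.

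\textbf{Gap: the lower bound at $p=2$, and smaller points.} You do not identify a $2$-subgroup of depth $4$. Your candidate $(\bZ/2)^9$ is generated by $T[2]$ and one further element, so its depth is exactly $2$. What you need is the self-centralizing $\mu_2^8$ of \cite[Proposition 5.3]{reichstein-youssin2}, which has non-toral $2$-rank $2$, i.e.\ depth $4$. At $p=3,5$ no rigidity argument is needed: a non-toral $p$-group automatically has depth divisible by $p$.

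Your prime-by-prime step needs no induction on $\dim G$. Since $[A:A_T]=\prod_p[A_p:A_p\cap T]$, you get $\depth(A)=\prod_p\depth(A_p)$ directly from the definition.

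A ``Frattini-quotient argument'' cannot pass from elementary abelian to arbitrary abelian subgroups. The group $A/\Phi(A)$ is not a subgroup of $G$, and torality of $A[p]$ does not imply torality of $A$. For example, the clock-and-shift $(\bZ/4)^2\subset\PGL_4$ is non-toral although its $2$-torsion subgroup is toral. This is why the paper handles the non-elementary maximal subgroups separately: $\mu_4\times\mu_2^6$ via self-dual codes in $\Spin_{16}$, and $\mu_4^3\times\mu_2^2$ via a fixed-point computation on $\mathfrak e_8$ combined with Lemma~\ref{lem.lie-algebra}.
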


 Tits~\cite[Proposition 9]{tits1992} showed that for $G$ as in Theorem~\ref{thm.torsion-b}, $t(G)$ is divisible by $60$; for an alternative proof, see \cite[Corollary 5.5]{reichstein-youssin2}.
  Tits asked if, perhaps, $t(G) = 60$. He called this an ``hypoth\`ese optimiste''; see~\cite[Section 5.2]{tits92}. Totaro showed that $t(G)$ is, in fact, much larger, more specifically $t(G) = 2^6 3^2 5$; see~\cite{totaro-E8}. Theorem~\ref{thm.torsion-b} may be viewed as salvaging Tits' ``hypoth\`ese optimiste'' with $t(G)$ replaced by $t_2(G)$. In particular, it shows that $t_2(G)$ may be strictly smaller than $t(G)$.

We also note that Tits~\cite{tits92} and Totaro~\cite{totaro-E8, totaro-spin} computed
$t(G)$ for the simply connected simple groups $G$ over the algebraically closed field $k$. In a similar spirit, it would be interesting to compute $t_2(G)$ for all simple groups. Theorem~\ref{thm.torsion-b} is a step in this direction.
The case of $G = \Spin_n$ looks particularly intriguing, because of the connection between finite abelian subgroups of $\Spin_n$ and self-dual binary error-correcting codes of length $n$; see~\cite{wood}.

\subsection{Zero-cycles vs. rational points}
The following natural question was raised by Totaro in 
\cite[Question 0.2]{totaro-splitting}.

\begin{question}\label{Totaro_question}
    Let $K$ be a field, let $G$ be a smooth affine connected $K$-group, and let $X$ be a $G$-homogeneous space over $K$. If $X$ has a zero-cycle of degree $d$ over $K$, does it have a point of degree dividing $d$? In other words, if
    $X(L_i) \neq \emptyset$ for finite field extensions $L_1,\dots,L_n$ of $K$ and $\gcd([L_1:K],\dots,[L_n:K]) = d$, does it follow that there exists a field extension $L/K$ whose degree $[L:K]$ divides $d$ such that $X(L) \neq \emptyset$? 
\end{question}

In the case where $d = 1$ and $X$ is projective, Question~\ref{Totaro_question} was asked earlier by Veisfeiler~\cite{veisfeiler}. In the case where $d=1$ and $X$ is a principal homogeneous space (i.e., a $G$-torsor) over $K$, Question~\ref{Totaro_question} goes back to Serre~\cite[Section 2.4, Question 2]{serre-pp}.

Counterexamples to Question~\ref{Totaro_question} (with $d = 1$) were first constructed by Florence~\cite{florence-zero-cycles}. One of these counterexamples is over the field of iterated Laurent series $\mathbb{C}_2 = \mathbb{C}(\!(t_1)\!)(\!(t_2)\!)$. Our next result gives a positive answer to Question~\ref{Totaro_question} in the case where $X$ is a principal homogeneous space (i.e., a torsor) over the field  $k_r = k(\!(t_1)\!)\dots(\!(t_r)\!)$ of iterated Laurent series.

\begin{theorem}\label{thm.serre_quest}
    Let $k$ be an algebraically closed field, and let $G$ be a smooth affine $k$-group.
    Assume that the characteristic of $k$ is good for $G$; see Definition~\ref{def.good_char}. Then, for all $d,r\geqslant 1$, every $G$-torsor $X$ over $k_r$ with a zero-cycle of degree $d$ has a closed point whose degree divides $d$.
\end{theorem}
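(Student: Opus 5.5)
We plan to reduce to torsors induced from finite abelian subgroups and then apply Theorem~\ref{thm.main}(2). The key input, which we take as available under the good-characteristic hypothesis, is a structure result for $G$-torsors over $k_r$: every $G$-torsor $X$ over $k_r$ is induced from a torsor under a finite abelian $k$-subgroup $A \subset G$ with $\Char(k)\nmid |A|$. That is, $X \simeq E*_A G$ for some $A$-torsor $E$ over $k_r$. This is the analogue of the Chernousov--Gille--Reichstein reduction, and we may replace $A$ by the subgroup through which $E$ is connected. Indeed, if $E$ is not connected, the Galois group of its component is a subgroup $A'\subset A$ and $E$ is induced from a connected $A'$-torsor. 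So we may assume $E=\Spec(M)$ with $M/k_r$ Galois with group $A(k)$. Good characteristic is what ensures the order of $A$ is prime to $\Char(k)$, and that $G^\circ$-torsors over $k_r$ behave as in characteristic zero; the component group of $G$ is handled within the same reduction.

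Next, let $d$ be the gcd of the degrees $[L_i:k_r]$ of finite extensions splitting $X$. Since $k$ is algebraically closed, the absolute Galois group of $k_r$ is an extension of $\hat{\mathbb Z}^r$ (its tame part) by a pro-$p$ group, with $p=\Char(k)$. For each $L_i$ we pass to its Galois closure and take the maximal subextension $K_i$ of degree prime to $|A|$. More concretely, we first replace $L_i$ by its compositum with nothing of degree divisible by primes outside $|A|$. For each prime $\ell$ dividing $|A|$, some $L_i$ has $v_\ell([L_i:k_r])=v_\ell(d)$. Since tame extensions of $k_r$ are abelian, the $\ell$-part of $L_i$ yields a tower $k_r\subset K\subset L$ with $[K:k_r]$ prime to $\ell$, $L/K$ abelian Galois of $\ell$-power degree $\ell^{v_\ell(d)}$ or less, and $L$ splitting $X$. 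Theorem~\ref{thm.main}(2), applied to the $\ell$-primary part of $A$ (after replacing $k_r$ by $K$, or by working $\ell$-locally), gives a maximal torus $T_\ell$ with $A_\ell/(A_\ell\cap T_\ell)$ embedding in $\Gal(L/K)$.

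Finally, we want a single extension of degree dividing $d$. Rather than the tori $T_\ell$, the cleaner route is direct. $X$ is split by the fixed field in $M$ of $A_T=A\cap T$ composed with a splitting of the induced $T$-torsor. A $T$-torsor over $k_r$ is trivial by Hilbert 90, and the $N_G(T)$-twisting is absorbed as well. So $X$ becomes trivial over $M^{A_T}$, of degree $[A:A_T]$. We therefore need one torus $T$ with $[A:A_T]\mid d$. Since $A=\prod_\ell A_\ell$, we would like to choose $T$ simultaneously for all $\ell$. This is the main obstacle: Theorem~\ref{thm.main}(1) gives a torus for each splitting field $L_i$, but not one compatible across all $i$. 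We plan to overcome it by applying the construction to the compositum-free description. Since $k_r$ has abelian tame Galois group, take $L$ to be the extension of degree $\prod_\ell \ell^{v_\ell(d)}$ inside $M$ obtained from the $\ell$-Sylow data. We then show that $X$ splits over $L$ by checking splitting $\ell$-locally, via restriction--corestriction on $H^1$ for the finite group $A$ acting through $\Gal(M/k_r)$. After that, a single application of Theorem~\ref{thm.main}(1) to $L$ finishes the argument.
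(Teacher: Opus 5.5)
Your reduction to $X\cong E*_AG$ with $E=\Spec(M)$ connected matches the paper. So does your observation that it suffices to find one maximal torus $T$ with $[A:A_T]\mid d$, since $X$ splits over $M^{A_T}$ (Lemma~\ref{lem.index_equals_depth}(1)). The per-prime data are also available, and more simply than you suggest. Theorem~\ref{thm.main}(1), applied to a splitting field $L_i$ with $v_\ell([L_i:k_r])=v_\ell(d)$, already gives a torus $T_\ell$ such that $A'_\ell\coloneqq A_\ell\cap T_\ell$ has index dividing $\ell^{v_\ell(d)}$ in the $\ell$-Sylow subgroup $A_\ell$. No appeal to part (2) or to tame Galois theory is needed. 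Also, your final application of Theorem~\ref{thm.main}(1) to $L$ is superfluous: once $X$ splits over some $L$ with $[L:k_r]\mid d$, you are done.

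The gap is the gluing step. Put $A'=\prod_\ell A'_\ell$ and $L=M^{A'}$, so $[L:k_r]=[A:A']$ divides $d$. Over $L\cong k_r$, $X_L$ is induced from the connected $A'$-torsor $\Spec(M)\to\Spec(L)$. By Proposition~\ref{prop.special_case}, $X_L$ is split if and only if $A'$ is toral. Your $\ell$-local check only shows that, for every $\ell$, $X_L$ splits over $M^{A'\cap T_\ell}$, which has degree prime to $\ell$ over $L$. In other words, $X_L$ has index $1$.

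Passing from index $1$ to triviality is exactly the case $d=1$ of the theorem you are proving, so the argument is circular. Restriction--corestriction does not break the circle. $H^1(L,G)$ is only a pointed set and has no corestriction. Corestriction on $H^1(L,A')$ is useless, because the class of the $A'$-torsor there is nonzero; only its image in $H^1(L,G)$ is supposed to vanish.

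The missing ingredient is Proposition~\ref{prop1}: commuting toral subgroups of pairwise coprime orders generate a toral subgroup. The paper proves it as follows.
\begin{itemize}
\item It reduces to $G$ reductive and to two factors $A_1\times A_2$.
\item Springer--Steinberg gives a maximal torus $T\supset A_1$ normalized by $A_2$.
\item Twisting by $\alpha_2$ turns the comparison of $i_*(\alpha_1,\alpha_2)$ with $i_*(0,\alpha_2)$ into the vanishing of a class in $H^1(K,{}_{\alpha_2}T)$.
\item That class is split by extensions of degrees dividing $|A_1|$ and $|A_2|$. Because ${}_{\alpha_2}T$ is commutative, the coprime-degree argument (Gordon--Sarney) legitimately applies here and only here.
\end{itemize}
With this in hand, Corollary~\ref{cor.of_prop1} gives a torus $T$ with $[A:A_T]=\depth(A)$. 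Lemma~\ref{lem.index_equals_depth}(2) gives $\depth(A)=\ind(X)$, which divides $d$.
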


When $G$ is not connected, there are counterexamples to Question~\ref{Totaro_question}  even in the case where $X$ is a principal homogeneous space and $d = 1$; see \cite[p.~192]{serre-gc}. Theorem~\ref{thm.serre_quest} shows that these counterexamples do not occur over $K=k_r$.

\subsection{Splitting torsors by genus 1 curves}
	By a genus $1$ curve over a field $K$, we shall mean a smooth projective geometrically connected curve of geometric genus $1$ over $K$. We shall say that a Brauer class $\alpha\in \on{Br}(K)$ is split by a smooth integral $K$-variety $Y$ if $\alpha$ pulls back to zero in $\on{Br}(Y)$. By a theorem of Grothendieck, this is equivalent to $\alpha$ pulling back to zero in $\on{Br}(K(Y))$. The following question was asked independently by Clark \cite{clark2008open} and Saltman \cite{ruozzi2011rage}.
	
	\begin{question} \label{question-clark-saltman}
		Let $K$ be a field, and let $\alpha\in \Br(K)$ be a Brauer class. Does there exist a genus $1$ curve $C$ over $K$ such that $\alpha$ is split by $C$?
	\end{question}

This question was recently answered in the negative by the second and third authors~\cite{genus1}. It is natural to generalize Question~\ref{question-clark-saltman} in two ways. 

\smallskip
(i) Any genus $1$ curve $C$ is a torsor over its Jacobian. Fixing an integer $g \geqslant 1$, we can ask a similar question in the case where $C$ is allowed to be a torsor over a $g$-dimensional abelian variety. 

\smallskip
(ii) Recall that
a Brauer class $\alpha \in \Br(K)$ is uniquely represented by a $\PGL_n$-torsor
$X$ over $K$, where $n$ is the Schur index of $\alpha$. Moreover, the genus $1$ curve $C$ splits $\alpha$ if and only if it splits $X$. It is thus natural to replace the Brauer class $\alpha$ by a torsor under some smooth affine connected $k$-group $G$, not necessarily $\PGL_n$. 

\smallskip
\noindent Note that (i) was already considered in~\cite{genus1}, whereas (ii) was not. Combining (i) and (ii), we arrive at the following more general question.

	\begin{question} \label{question-clark-saltman2}
    Let $K$ be a field, let $G$ be a smooth affine connected $K$-group, and let
    $X$ be a $G$-torsor over $K$. Fix an integer $g \geqslant 1$.  Does there exist a $g$-dimensional abelian variety $\Lambda$ over $K$ and a $\Lambda$-torsor $C$ over $K$ such that $C$ splits $X$?
    \end{question}

 The following result, generalizing \cite[Theorem 1.2]{genus1}, shows that the answer to Question~\ref{question-clark-saltman2} is negative when $K$ is a field extension of an algebraically closed field $k$ and $G$ is both defined over $k$ and contains an elementary abelian $p$-group of sufficiently large depth.
 
\begin{theorem}\label{thm.non-toral-obstruction}
Let $k$ be an algebraically closed field, let $p$ be a prime not equal to
$\operatorname{char}(k)$, let $G$ be an algebraic $k$-group, let $A$ be an elementary abelian $p$-subgroup of $G$, let $r$ be the rank of $A$, and let $E$ be a connected $A$-torsor over $k_r$. Let $g\geqslant 1$ be an integer, and suppose that for every maximal torus $T\subset G$,
\[
        \mathrm{rank}(A/A_T) \geqslant 
        \begin{cases}
        5g^2+2g, & \text{if } p>2 \text{ and } g>1,\\[2mm]
        9g^2+2g-1, & \text{if } p=2 \text{ and } g>1,\\[2mm]
        6, & \text{if } p>2 \text{ and } g=1,\\[2mm]
        7, & \text{if } p=2 \text{ and } g=1.
        \end{cases}
\]
Then the induced $G$-torsor $E*_AG$ over $k_r$ cannot be split by any torsor under any $g$-dimensional abelian variety over $k_r$.
\end{theorem}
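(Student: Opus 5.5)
Suppose, for contradiction, that $C$ is a torsor over a $g$-dimensional abelian variety $\Lambda$ over $k_r$, and that $C$ splits $X \coloneqq E*_AG$. The plan is to reduce to a statement about Galois groups of finite extensions and then apply Theorem~\ref{thm.main}(2); this mirrors the strategy of \cite{genus1} for $\PGL_{p^r}$.

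\textbf{Step 1: from $C$ to a structured finite splitting field.} Since $X$ is split over $k_r(C)$ and $X$ is a torsor under a group of finite type, a standard specialization argument gives something stronger. It should yield closed points $c\in C$ over which $X$ is split, with residue fields $L=k_r(c)$ of controlled shape. Specifically, I would use the structure of $C$ over $k_r$: it has a closed point of some degree $m$, and it is split by an extension of $k_r$. Then I would use the arithmetic of abelian varieties over iterated Laurent series fields, via semistable reduction and the Néron model, in which $\Lambda(k_r)$ modulo $p$-divisible parts is controlled by the reduction. The goal is a point $c$ with $L=k_r(c)$ containing a subfield $K$ such that $[K:k_r]$ is prime to $p$ and $L/K$ is Galois with $\Gal(L/K)$ a quotient of an abelian $p$-group of bounded rank. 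The bound should come from the $p$-torsion of $\Lambda$ together with the tame Galois group $\widehat{\mathbb Z}'^{\,r}$ of $k_r$.

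\textbf{Step 2: bounding the rank.} The Galois group should be an extension built from $\Lambda[p^n]$, of rank $\leqslant 2g$, acted on by the tame inertia. Here the unipotent monodromy of a degenerating abelian variety contributes additional abelian subquotients. I expect the quadratic terms $5g^2+2g$ and $9g^2+2g-1$ to arise from bounding the rank of the maximal elementary abelian quotient and subgroup of such a group, meaning an extension of $(\mathbb Z/p^n)^{\leqslant 2g}$ by a group involving the $g\times g$ toric and monodromy part. The case $p=2$ gets a worse constant because of $\pm1$ issues with the Weil pairing and the action on the component group. Special care is needed for $g=1$, where the bounds $6$ and $7$ should be sharp: there a Tate curve analysis gives $\Gal(L/K)$ of rank at most $5$, respectively $6$.

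\textbf{Step 3: conclude.} Let $A$ be elementary abelian of rank $r$ and let $E$ be connected over $k_r$, so that $E$ corresponds to the maximal elementary abelian $p$-extension $M$. Apply Theorem~\ref{thm.main}(2) to $L\supset K\supset k_r$. This gives a maximal torus $T$ such that $A/A_T$, an elementary abelian $p$-group, is simultaneously a quotient and a subgroup of $\Gal(L/K)$. By Step 2, its rank is then strictly smaller than the stated threshold, contradicting the hypothesis.

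The main obstacle is Steps 1–2. The difficulty is producing a splitting point whose residue field has a Galois subextension of prime-to-$p$ cobase and $p$-rank bounded in terms of $g$ alone, independent of $r$. I would first try to reuse \cite{genus1}'s analysis: pass to the prime-to-$p$ closure of $k_r$, over which $C$ acquires a point after an extension governed by the $p$-primary part of $H^1(k_r,\Lambda)$. I would then bound that part using the filtration coming from the Raynaud extension $0\to T'\to \widetilde\Lambda\to B\to 0$ over the iterated Laurent series field.
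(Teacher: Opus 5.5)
Your Step 3 is correct and is exactly how the paper concludes. Once you have a finite extension $K/k_r$ of degree prime to $p$ and a Galois extension $L/K$ with $C(L)\neq\emptyset$, Theorem~\ref{thm.main}(2) embeds $A/A_T$ into $\Gal(L/K)$, and any bound on $\rank_p\Gal(L/K)$ below the threshold gives the contradiction.

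The gap is that Steps 1--2, which carry all the content, are not an argument. You never construct $K$ and $L$, and you never bound $\rank_p\Gal(L/K)$. You only predict that an $r$-independent bound will come out of N\'eron models, monodromy and Raynaud extensions over $k_r$. Your account of the constants is also wrong. They do not come from toric or monodromy parts, component groups, or a Tate curve analysis. They are bounds for $\rank_p\SL_{2g}(\bZ/p^e\bZ)$, namely $5g^2-1$ for $p$ odd and $9g^2-2$ for $p=2$ when $g\geqslant 2$, and $3$, resp.\ $4$, when $g=1$. To these one adds $2g$ for a subgroup of $(\bZ/p^e\bZ)^{2g}$.

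The missing idea is that no reduction theory is needed. The construction is purely Galois-cohomological and works over any field containing $\mu_{p^e}$.

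(i) Kill the prime-to-$p$ part of the period by a prime-to-$p$ extension. If $[C]$ has period $p^em$ with $p\nmid m$, the torsor representing $p^e[C]$ has period $m$. It therefore contains a torsor under $\Lambda[m]$, which is a finite subscheme of degree $m^{2g}$, so its index is prime to $p$. Hence it has a closed point of prime-to-$p$ degree, and over its residue field $K$ the class $[C_K]$ has period dividing $p^e$ \cite[Lemma 6.1]{genus1}.

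(ii) Build the splitting field. Lift $[C_K]$ through the Kummer sequence to $\xi\in H^1(K,\Lambda[p^e])$. Let $L$ be the fixed field of the kernel of the homomorphism $\Gal(K)\to\Lambda[p^e](\ol{K})\rtimes\mathrm{GL}(\Lambda[p^e](\ol{K}))$, $\sigma\mapsto(\xi_\sigma,\sigma|_{\Lambda[p^e]})$. Then $\xi$ vanishes over $L$, so $C(L)\neq\emptyset$. Moreover, $\Gal(L/K)$ is an extension of a subgroup of $\SL_{2g}(\bZ/p^e\bZ)$ by a subgroup of $(\bZ/p^e\bZ)^{2g}$. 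The determinant is trivial because it is a power of the mod-$p^e$ cyclotomic character and $\mu_{p^e}\subset k$ \cite[Proposition 3.1]{genus1}.

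(iii) Bound the rank. Subadditivity of the $p$-rank and the bound on $\rank_p\SL_{2g}(\bZ/p^e\bZ)$ from \cite[Corollary 4.4]{genus1} give $\rank_p\Gal(L/K)\leqslant 5g^2+2g-1,\ 9g^2+2g-2,\ 5,\ 6$ in the four cases. Each is one less than the corresponding hypothesis.

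The only input specific to $k_r$ is Theorem~\ref{thm.main}(2). It requires exactly that $[K:k_r]$ be prime to $|A|=p^r$ and that $L/K$ be Galois.
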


Our proof of Theorem~\ref{thm.non-toral-obstruction} is based on Theorem~\ref{thm.main}(2), which generalizes \cite[Proposition 2.1]{genus1}. We give some applications of Theorem~\ref{thm.non-toral-obstruction} in Section~\ref{sect.genus1}.

\section{Notation and Preliminaries}
\label{sect.prelim} 

\subsection{Notational conventions}
\label{subsect.notation}
Throughout this article, we fix an algebraically closed field $k$. For every integer $r\geqslant 0$, we let $k_r \coloneqq k(\!(t_1)\!)\dots (\!(t_r)\!)$ be the field of iterated Laurent series in the variables $t_1, \ldots, t_r$, with the convention $k_0=k$.

Let $G$ be an algebraic $k$-group, that is, a group scheme of finite type over $k$. By definition, the \emph{smooth linear part} $\Glin\subset G$ of $G$ is the largest smooth affine and connected $k$-subgroup of $G$; see \cite[Section 2.1]{bouthier2025generically}. Moreover, $\Glin$ is normal in the maximal reduced subgroup $G_{\mathrm{red}} \subset G$ and the quotient $G_{\mathrm{red}}^{\circ}/\Glin$ is an abelian variety. If $H\subset G$ is a finite-index closed $k$-subgroup of $G$, we let $[G:H]$ denote the index of $H$ in $G$. Recall that by definition, the algebraic $k$-group $G$ is reductive if it is smooth, affine, connected and its unipotent radical $R_u(G)$ is trivial.

Let $A$ be a finite abelian $k$-group.  We let $|A|$ be the order of $A$, that is, the dimension of the finite-dimensional $k$-vector space $\mathcal{O}(A)$. If $A$ is \'etale (for example, if $|A|$ is invertible in $k$), then $|A|=|A(k)|$. If $A\subset G$, for every $k$-torus $T\subset G$ we let $A_T\coloneqq A\cap T$, and we say that $A$ is \emph{toral} if $A$ is contained in a $k$-torus of $G$.

 \subsection{The Grothendieck torsion index}
The Grothendieck torsion index $t(G)$ of an affine algebraic $k$-group $G$ was defined in \eqref{eq:t}.

\begin{proposition} \label{prop:tG-well-defined}
   Let $G$ be an affine $k$-group, let $E \to Z$ be a versal $G$-torsor, where $Z$ is an integral $k$-scheme of finite type. Then
\[t(G) = \operatorname{ind}(E_{k(Z)}).\]
In particular, $t(G)$ is an integer.
\end{proposition}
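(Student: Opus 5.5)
We will show two divisibilities: $\operatorname{ind}(X)$ divides $\operatorname{ind}(E_{k(Z)})$ for every $G$-torsor $X$ over a field $K\supseteq k$, and $\operatorname{ind}(E_{k(Z)})$ divides $t(G)$. Together these give equality. The second divisibility is immediate from the definition \eqref{eq:t}, since $k(Z)$ is a field extension of $k$ and $E_{k(Z)}$ is a $G$-torsor over it. Integrality then follows because $E_{k(Z)}$ is a torsor under an affine group of finite type, so it is a nonempty $k(Z)$-scheme of finite type. It therefore has a closed point, whose residue field is a finite extension, and so $\operatorname{ind}(E_{k(Z)})$ is a positive integer.

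For the first divisibility, let $L/k(Z)$ be a finite extension with $E(L)\neq\emptyset$. We spread this out over a dense open subset. Choose a dense open $U\subset Z$ and a finite flat (even finite étale in the separable case, though flatness suffices) morphism $U'\to U$ of degree $[L:k(Z)]$ with function field $L$, such that $E_{U'}\to U'$ admits a section. We may shrink $U$ to make these finitely many equations and inverted denominators well defined.

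Now let $X$ be a $G$-torsor over an arbitrary field $K\supseteq k$. Versality means that the $K$-points $u\in U(K)$ with $E_u\simeq X$ are dense; in particular, for any dense open $U$ there is at least one such point. This is where the hypothesis that $k$ is algebraically closed, hence infinite, and the standard formulation of versality (existence for every dense open) are used.

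Pull back $U'\to U$ along $u\colon\Spec K\to U$. We obtain a finite flat $K$-algebra $B=U'\times_U \Spec K$ of dimension $[L:k(Z)]$ over which $X_B\simeq E_{U'}\times_{U'}\Spec B$ has a section. Writing $B$ as a product of local Artinian algebras with residue fields $K_i$, each $X(K_i)$ is nonempty and $\sum_i \operatorname{length}(B_i)[K_i:K]=[L:k(Z)]$. Hence $\operatorname{ind}(X)=\gcd$ of the $[K_i:K]$ divides $[L:k(Z)]$. Taking the gcd over all such $L$ gives $\operatorname{ind}(X)\mid \operatorname{ind}(E_{k(Z)})$.

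The main obstacle is the spreading-out step: producing a finite flat $U'\to U$ of the correct degree with a section of $E$ over it. We will take the integral closure of $\mathcal O(U)$ in $L$ (or simply an affine model of generators of $L$), and use generic flatness to shrink $U$ so that it becomes finite locally free of rank $[L:k(Z)]$. The generic section extends over an open subset of the model, and we shrink $U$ further so that it is defined on all of $U'$. This last shrinking is possible because removing the image of a closed subset under a finite map still leaves a dense open $U$.
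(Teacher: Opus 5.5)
Your proof is correct and follows essentially the same route as the paper: spread a finite splitting field $L/k(Z)$ out to a finite flat cover $U'\to U$ of a dense open over which $E$ is trivial, pull back along a $K$-point supplied by versality, and read off $\operatorname{ind}(X)\mid [L:k(Z)]$ from the length decomposition $\sum_i m_i[K_i:K]$ of the resulting Artinian $K$-algebra, with the reverse divisibility immediate from the definition. One small wording slip: $\operatorname{ind}(X)$ need only divide $\gcd_i [K_i:K]$ and need not equal it, but divisibility is all your argument uses.
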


For the definition of a versal torsor, see \cite[Definition 5.1]{serre-ci}. In the case where $G$ is assumed to be connected, Proposition~\ref{prop:tG-well-defined} follows from~\cite[Theorem 2]{grothendieck-special}. See also \cite[Theorem 1.1]{totaro-spin}, where $G$ is assumed to be reductive. Here $G$ is an arbitrary affine $k$-group.

\begin{proof}
It is clear that $\operatorname{ind}(E_{k(Z)}) \mid t(G)$. Conversely, let $K/k$ be a field extension, and let $X$ be a $G$-torsor over $K$. Let $L/k(Z)$ be a finite field extension which splits $E_{k(Z)}$, and let $d\coloneqq [L:k(Z)]$. By generic flatness, after replacing $Z$ by a non-empty open subset, the extension $L/k(Z)$ spreads out to a finite flat morphism $Z'\to Z$ of degree $d$, such that the pullback of $E$ to $Z'$ is trivial.

By the versality of $E\to Z$, there exists $b\in Z(K)$ such that $b^*E\cong X$. Consider the finite $K$-algebra $A \coloneqq K \times_Z Z'$. By functoriality, $X_A$ is a trivial $G$-torsor over $A$. Since $Z' \to Z$ is finite flat of degree $d$, we have $\dim_K(A)=d$.  Let $\kappa_1,\dots,\kappa_r$ be the residue fields of the Artinian $K$-algebra $A$, and let $m_i$ be the corresponding lengths. Then
\[
d = \dim_K A = \sum_{i=1}^r m_i [\kappa_i:K].
\]
Since $X_A$ is trivial, $X_{\kappa_i}$ is trivial for every $i$. Therefore $\operatorname{ind}(X) \mid [\kappa_i:K]$ for every $i$. Hence $\operatorname{ind}(X)$ divides $\sum_{i=1}^r m_i[\kappa_i:K] = d=[L:k(Z)]$. Since this holds for every splitting field $L$ of $E_{k(Z)}$, we deduce that $\ind(X)\mid \ind(E_{k(Z)})$. Thus $t(G)\mid \ind(E_{k(Z)})$ and hence $t(G)=\ind(E_{k(Z)})$, as desired.
\end{proof}

\begin{lemma} \label{lem.t(G)}
Let 
\[1 \longrightarrow G_1 \longrightarrow G \longrightarrow G_2 \longrightarrow 1\] be a short exact sequence of affine algebraic $k$-groups. Then $t(G)$ divides $t(G_1) t(G_2)$.
\end{lemma}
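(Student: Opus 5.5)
Fix a field extension $K/k$ and a $G$-torsor $X$ over $K$; by definition \eqref{eq:t} it suffices to show that $\ind(X)$ divides $t(G_1)t(G_2)$. Equivalently, for every prime $p$ we will produce a finite extension $K''/K$ splitting $X$ whose degree has $p$-adic valuation at most $v_p(t(G_1))+v_p(t(G_2))$. Since $\ind(X)$ is a gcd of degrees of splitting fields, these extensions together give the claim.

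The first step is to split the image of $X$ in $G_2$. Push $X$ forward along $G\to G_2$ to get a $G_2$-torsor $X/G_1$ over $K$. By definition of $t(G_2)$, $\ind(X/G_1)$ divides $t(G_2)$. Hence there are finite extensions $K'_1,\dots,K'_m$ of $K$ that split $X/G_1$ and satisfy $\gcd_j [K'_j:K] \mid t(G_2)$. For a fixed $p$ we choose $K'$ among them with $v_p([K':K])\le v_p(t(G_2))$.

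Next we reduce the structure group to $G_1$ over $K'$. Since $(X/G_1)(K')\neq\emptyset$, the fiber of $X_{K'}\to (X/G_1)_{K'}$ over a $K'$-point is a $G_1$-torsor $Y$ over $K'$, and it is contained in $X_{K'}$. So $Y(L)\neq\emptyset$ implies $X(L)\neq\emptyset$ for any $L\supset K'$. This is the standard fact that the fiber of $H^1(K',G)\to H^1(K',G_2)$ over the trivial class is the image of $H^1(K',G_1)$. It holds for fppf torsors because $G\to G_2$ is faithfully flat with kernel $G_1$, so $G\to G/G_1\cong G_2$ is a $G_1$-torsor. By definition of $t(G_1)$, we have $\ind(Y)\mid t(G_1)$. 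So we can pick a finite extension $K''/K'$ splitting $Y$, hence $X$, with $v_p([K'':K'])\le v_p(t(G_1))$. Then $v_p([K'':K])\le v_p(t(G_1))+v_p(t(G_2))$, and ranging over all primes gives $\ind(X)\mid t(G_1)t(G_2)$.

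The only step that needs care is the second: identifying $X/G_1$ as a $G_2$-torsor and its fiber as a $G_1$-torsor when the groups may be non-smooth. This uses the fppf quotient $G/G_1\cong G_2$, which holds because the sequence is exact, and the fact that contracted products and quotients of torsors by normal subgroups behave well in the fppf topology. Everything else is bookkeeping of degrees, multiplied through the tower $K\subset K'\subset K''$. Alternatively, one could run the argument at the level of a versal torsor using Proposition~\ref{prop:tG-well-defined}, but the direct prime-by-prime argument above avoids needing versality.
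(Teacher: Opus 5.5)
Your proposal is correct and follows essentially the same route as the paper: push $X$ forward to $G_2$, split it, reduce the structure group to $G_1$ over that extension, split the resulting $G_1$-torsor, and multiply degrees through the tower. The only difference is bookkeeping. You check the final divisibility prime by prime using $p$-adic valuations, while the paper shows that the ideal $(t(G_1)t(G_2))\subset\mathbb{Z}$ is contained in the ideal generated by all the products $d_i d_{ij}$ of the degrees.
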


\begin{proof} Let $K$ be a field containing $k$, and let $E$ be a $G$-torsor over $K$.  
Letting $E_2=E/G_1$ be the $G_2$-torsor over $K$ induced by $E$, by definition of $t(G_2)$ there exist field extensions $K_1/K, \ldots, K_m/K$ of degrees $d_i\coloneqq [K_i : K]$ such that $E_2(K_i)\neq\emptyset$ for all $i=1,\dots,m$ and 
\begin{equation} \label{e.t(G_2)}
\text{$\gcd(d_1, \ldots, d_m)$ divides $t(G_2)$.}
\end{equation}
For all $i=1,\dots,m$, since $E_2(K_i)\neq\emptyset$, the $G$-torsor $E_{K_i}$ admits reduction of structure to a $G_1$-torsor $\tilde{E}_i$ over $K_i$. By definition of $t(G_1)$, for all $i = 1, \ldots, m$ there exist field extensions $K_{i\, 1}/K_i, \ldots, K_{i\,n_i}/K_i$ of degrees $d_{ij} \coloneqq [K_{ij}: K_i]$ such that each $\tilde{E}_i(K_{ij})\neq\emptyset$, and
\begin{equation} \label{e.t(G_1)}
\text{$\gcd(d_{i\, 1}, \ldots, d_{i\, n_i})$ divides $t(G_1)$.}
\end{equation} 
For all $i,j$, since $\tilde{E}_i(K_{ij})\neq\emptyset$, we have that $E(K_{ij})\neq\emptyset$, and hence $\ind(E)$ divides  
\[ [K_{ij}: K] = [K_{ij}: K_i] \cdot [K_i: K] = d_{ij} \cdot d_i.\]
Therefore, in order to finish the proof of Lemma~\ref{lem.t(G)}, it suffices to show that the ideal $\Lambda$ of $\mathbb Z$ generated by $d_i d_{ij}$ for every $i = 1, \ldots, m$ and $j = 1, \ldots, n_i$ contains $t(G_1) t(G_2)$. To prove this, let $I \coloneqq (d_1, \ldots, d_m) $ be the ideal of $\mathbb Z$ generated by $d_1, \ldots, d_m$ and $J_i \coloneqq (d_{i\, 1}, \ldots, d_{i\, n_i})$ be the ideal of $\mathbb Z$ generated by $d_{i\, 1}, \ldots, d_{i\, n_i}$. Then $(t(G_2)) \subset I$ by~\eqref{e.t(G_2)}, and $(t(G_1)) \subset J_i$ for each $i$  by~\eqref{e.t(G_1)}. Now
\begin{align*}
(t(G_1)t(G_2))
&= (t(G_1)) \cdot (t(G_2)) \\
&\subset (t(G_1)) \cdot I \\
&= (t(G_1)) \cdot (d_1) + (t(G_1)) \cdot (d_2) + \ldots
   + (t(G_1)) \cdot (d_m) \\
&\subset J_1 \cdot (d_1) + J_2 \cdot (d_2) + \ldots + J_m \cdot (d_m) \\
&= \Lambda,
\end{align*}
as desired.     
\end{proof}

\begin{remark} The same argument shows that $t_2(G)$ divides $t_2(G_1)t_2(G_2)$. The only difference is that instead of taking $K$ to be an arbitrary field containing $k$, we take $K$ to be $k_r$ for some $r \geqslant 0$. 
\end{remark}

\begin{remark} \label{rem.chow} 
    Let $G$ be a reductive $k$-group, let $B$ be a Borel subgroup of $G$, and let $T \subset B$ be a maximal torus. In \cite[Chapter 4]{grothendieck-special}, Grothendieck defined the torsion index $t(G)$ in terms of the characteristic map 
\[ c_G \colon \operatorname{Sym}(X(T)) \longrightarrow \operatorname{CH}(G/B)\]
and showed that this definition is equivalent to \eqref{eq:t}; see \cite[Theorem 2]{grothendieck-special}.
\end{remark}

\subsection{Torsion primes}
Let $G$ be a reductive $k$-group. (Recall that this implies that $G$ is connected.) There are several definitions of a torsion prime for $G$ in the literature.

\smallskip
(i) Grothendieck's definition~\cite[Definition 3]{grothendieck-special}. A prime integer $p$ is called a torsion prime for $G$ if $p$ divides the Grothendieck torsion index $t(G)$.

\smallskip
(ii) Steinberg's definition~\cite[Definition 2.1]{steinberg-torsion}. Let $H$ be a closed subgroup of $G$. Assume that $H$ is regular in $G$, i.e., $H$ contains a maximal torus of $G$. Then we can identify the root system of $H$ as a subsystem of the root system of $G$. We will say that a regular subgroup $H$ is admissible if its root system is integrally closed in the root system of $G$. A prime $p$ is called a torsion prime for $G$ if it divides the order of the fundamental group $F(H')$ of $H'$ for some regular admissible subgroup $H$ of $G$. Here $H'$ denotes the derived subgroup of $H$.

\smallskip
(iii) The root system definition. Let $\Sigma$ be the root system of $G$, $\Sigma^* = \{ \alpha^* = \frac{2 \alpha}{(\alpha, \alpha)} \, | \, \alpha \in \Sigma \}$ be the dual root system and $L(\Sigma^*)$ be the lattice generated by $\Sigma^*$. 
A prime $p$ is called a torsion prime for $G$ if $L(\Sigma^*)/L(\Sigma_1^*)$ has non-trivial $p$-torsion for some closed subsystem $\Sigma_1 \subset \Sigma$ or if $p$ divides the order of the fundamental group $F(G')$, where $G'$ is the derived subgroup of $G$.

\smallskip
(iv) Borel's definition~\cite{borel-tohoku}. Here we assume that $k =\mathbb{C}$. Let $K$ be a maximal compact subgroup of $G$. Then $p$ is called a torsion prime for $G$ if $H^*(K, \mathbb Z)$ has non-trivial additive $p$-torsion.

\begin{lemma} \label{lem.torsion-primes}
Let $G$ be a split reductive group scheme over $\bZ$. Specializing $G$ to the algebraically closed field $k$, we obtain the reductive $k$-group $G_k = G \times_{\bZ} k$.

\begin{enumerate}
    \item The sets of torsion primes for $G_k$ in the sense of definitions {\rm (i)}-{\rm (iii)}  coincide. If $k=\mathbb C$, then this common set also coincides with the set of torsion primes obtained in the sense of definition {\rm (iv)}.
    \item Let $k'$ be an algebraically closed field. Then a prime $p$ is a torsion prime for $G_k$ if and only if it is a torsion prime for $G_{k'}$.  
\end{enumerate}
\end{lemma}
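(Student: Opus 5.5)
This lemma is essentially a compilation of known results, so the plan is to assemble them rather than prove anything from scratch. For part (1), I would compare definitions (i) and (ii) using Grothendieck's characteristic-map description of $t(G)$ (Remark~\ref{rem.chow}). Via Demazure's computation of the Chow ring of $G/B$, $t(G)$ can be read off from the cokernel of
\[
c_G\colon \operatorname{Sym}(X(T))\otimes\bZ_{(p)}\longrightarrow \operatorname{CH}(G/B)\otimes \bZ_{(p)}.
\]
Demazure and Grothendieck show that this map is surjective exactly when $p$ is not a torsion prime in Steinberg's sense, and Steinberg's paper itself proves that (ii) and (iii) agree. So the plan for (i)$\Leftrightarrow$(ii)$\Leftrightarrow$(iii) is:
\begin{enumerate}
\item Quote \cite[Theorem 2]{grothendieck-special} to identify (i) with the failure of $p$-local surjectivity of $c_G$.
\item Quote Demazure, or Grothendieck's \S 4 computation, identifying that failure with the existence of a regular admissible subgroup $H$ such that $p \mid |F(H')|$.
\item Quote \cite{steinberg-torsion}, in particular Lemma~2.5 and Corollary~1.13 there, for (ii)$\Leftrightarrow$(iii).
\end{enumerate}
For $k=\mathbb C$, Borel's theorem \cite{borel-tohoku} shows that $H^*(K,\bZ)$ has $p$-torsion if and only if $H^*(BK,\bZ)$ does. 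Combined with the topological interpretation $H^*(G/B,\bZ)=\operatorname{CH}(G/B)$, this gives (iv)$\Leftrightarrow$(i).

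For part (2), the argument follows from (1): definition (iii) depends only on the root datum of $G$, namely the root system $\Sigma$, the coroot lattice and $F(G')$. The root datum of $G_k$ is the same for every algebraically closed field $k$, because $G$ is a split group scheme over $\bZ$ and its root datum is defined over $\bZ$. Hence the sets of torsion primes in sense (iii) for $G_k$ and $G_{k'}$ coincide, and by (1) so do the sets in sense (i).

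The main obstacle is step 2 in positive characteristic, and in particular when $\Char(k)$ is itself a torsion prime. There one has to check that Grothendieck's equivalence between \eqref{eq:t} and the characteristic-map definition holds over any algebraically closed field. Remark~\ref{rem.chow} asserts exactly this. The remaining concern is that $\operatorname{CH}(G/B)$ and $c_G$ are independent of $k$. I would handle that by noting that both are defined over $\bZ$ through the Bruhat cell decomposition, so the cokernel of $c_G$ is characteristic-free. This yields part (1) for all $k$ at once.
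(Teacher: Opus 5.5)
\textbf{Gap in step 2.} Your step 2 is the only link in your chain between (i) (and, through (i), (iv)) and Steinberg's definitions (ii) and (iii), and the citation you give does not supply it. Step 1, relating $p\mid t(G)$ to the failure of surjectivity of $c_G\otimes\bZ_{(p)}$, is fine. But neither Demazure nor Grothendieck identifies that failure with the existence of a regular admissible subgroup $H$ with $p\mid |F(H')|$. Steinberg's notion postdates both papers. Moreover, the standard proof that the prime divisors of $t(G)$ are the root-theoretic torsion primes goes through the topology of a compact form, not through a direct algebraic analysis of $\operatorname{coker}(c_G)$ over an arbitrary $k$. Without step 2, your argument gives no relation at all between (i) and (iv) on one side and (ii) and (iii) on the other. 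Separately, your sketch of (iv)$\Leftrightarrow$(i) from ``$H^*(K,\bZ)$ has $p$-torsion iff $H^*(BK,\bZ)$ does'' says nothing by itself about the cokernel of $c_G$. The result you actually need there is \cite[Theorem 4]{grothendieck-special}.

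\textbf{How to close it.} The paper's route fixes this, and you already have most of the ingredients. You correctly note that (i) is independent of $k$ (via the characteristic-free description of $c_G$ and Remark~\ref{rem.chow}) and that (iii) depends only on the root datum. Together with (ii)$\Leftrightarrow$(iii) from \cite[Lemma 2.5]{steinberg-torsion}, this reduces part (1) to proving (i)$\Leftrightarrow$(ii) for $k=\mathbb C$. Over $\mathbb C$, (i)$\Leftrightarrow$(iv) is \cite[Theorem 4]{grothendieck-special}. The equivalence (iv)$\Leftrightarrow$(ii) comes from comparing two characterizations by elementary abelian subgroups. By \cite[Theorem 2.28]{steinberg-torsion}, $p$ is a torsion prime in sense (ii) iff $G$ has a non-toral elementary abelian $p$-subgroup. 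By Borel \cite{borel-tohoku}, $p$ is a torsion prime in sense (iv) iff the maximal compact subgroup $K$ has one. These two conditions agree for three reasons: every finite subgroup of $G(\mathbb C)$ is conjugate into $K$; for finite $A\subset K$, torality in $K$ or in $G$ is equivalent to $A$ lying in the identity component of its centralizer; and $C_G(A)$ is the complexification of $C_K(A)$. With (1) in hand, your root-datum argument for part (2) is the same as the paper's.
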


\begin{proof}
Definition (iii) is purely in terms of the root system of $G$, hence, is independent of the choice of $k$.

Steinberg~\cite[Lemma 2.5]{steinberg-torsion}  showed that (ii) is equivalent to (iii); hence, also, independent of the choice of $k$.  Grothendieck showed that (i) is also independent of the choice of $k$ (see the proof of \cite[Theorem 3]{grothendieck-special}) and that for $k = \mathbb C$, (i) is equivalent to (iv); see \cite[Theorem 4]{grothendieck-special}. 

It remains to show that (ii) is equivalent to (iv) when $k = \mathbb C$. By \cite[Theorem 2.28]{steinberg-torsion}, 
$p$ is a torsion prime for $G$ in the sense of (ii) if and only if 
\begin{equation} \label{e.non-toral-algebraic}
\text{$G$ has a non-toral finite elementary abelian $p$-subgroup.}
\end{equation}
On the other hand, by \cite[Theorem 2.28]{borel-tohoku}, $p$ is 
a torsion prime for $G$ in the sense of (iv) if and only if 
\begin{equation} \label{e.non-toral-compact}
\text{$K$ has a non-toral finite elementary abelian $p$-subgroup.}
\end{equation}
Conditions~\eqref{e.non-toral-algebraic} and~\eqref{e.non-toral-compact} are readily seen to be equivalent.
\end{proof}

\begin{remark} Lemma~\ref{lem.torsion-primes} will not be cited in this paper. However, it will be used implicitly to justify citing various sources, where a priori different definitions of torsion primes may be used.
\end{remark}

\begin{remark} Recall that the base field $k$ is assumed to be algebraically closed throughout this paper. Some authors consider the Grothendieck torsion index and torsion primes for algebraic groups defined over an arbitrary field;
see, e.g., \cite{tits1992, serre-pp}. We will not do this here.
\end{remark}

\subsection{Galois theory of iterated Laurent series fields}
\label{sect.galois-laurent}
Let $r\geqslant 0$ be an integer, and let $l$ be the characteristic exponent of $k$, that is, $l = 1$ if $\Char(k) = 0$ and $l = \Char(k)$ if $\Char(k) > 0$.
For any $r$-tuple of prime-to-$l$ positive integers $n = (n_1,\dots,n_r)$, let $F_n = k(\!(s_1)\!)\dots(\!(s_r)\!)$ be the field extension of $k_r$ given by $s_i^{n_i} = t_i$ for all $1\leq i\leq r$. Then $F_n/k_r$ is a tamely ramified Galois extension of $k_r$ with Galois group $$\Gal(F_n/k_r)\cong \mu_{n_1}\times \dots \times \mu_{n_r}.$$
Let $\hat{\bZ}'(1)\coloneqq \varprojlim \mu_m$, where $m$ ranges over all integers prime to $l$. Taking the limit of the surjections $\Gal(k_r) \twoheadrightarrow \Gal(F_n/k_r)$ over all possible $r$-tuples $n$ induces a short exact sequence
\begin{equation}\label{eq:gal-kr-presentation}
    1 \longrightarrow P \longrightarrow \Gal(k_r) \xlongrightarrow{\pi} \hat{\bZ}'(1)^r \longrightarrow 1,
\end{equation}
where $P$ is a pro-$l$ group; see \cite[Theorem A.24]{tignol2015value} or \cite[Lemma 5.1]{gille2009lower}. (In particular, $\pi$ is an isomorphism if $\Char(k)=0$.) In other words, $\pi$ identifies the maximal prime-to-$l$ quotient of $\Gal(k_r)$ with $\hat{\bZ}'(1)^r$.

Let $G$ be an algebraic $k$-group, and let $\vphi\colon \Gal(k_r)\to G(k)$ be a continuous homomorphism. The homomorphism $\vphi$ is in particular a cocycle for $\Gal(k_r)$, and hence it defines a cohomology class $[\vphi]\in H^1(k_r,G)$. We say that $\vphi\colon \Gal(k_r)\to G(k)$ is \emph{tame} if the order of $\mathrm{Im}(\vphi)\subset G(k)$ is prime to $l$.

\begin{lemma}\label{lem.tame_hom_is_loop}
Let $G$ be an algebraic $k$-group, let $r\geqslant 0$, and let $\vphi\colon \Gal(k_r)\to G(k)$ be a tame continuous homomorphism. Let $A_\vphi\subset G$ be the unique finite \'etale abelian $k$-subgroup of $G$ such that $A_\vphi(k)=\mathrm{Im}(\vphi)$, so that $\Char(k)\nmid |A_{\vphi}|$. Then there exists a connected $A_\vphi$-torsor $E$ over $k_r$ such that $[E*_{A_\vphi}G]=[\vphi]$ in $H^1(k_r,G)$.
\end{lemma}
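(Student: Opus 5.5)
The plan is to build the torsor directly from the kernel of $\vphi$, regarded as a surjection onto $A_\vphi(k)$. First I will check that $\mathrm{Im}(\vphi)$ is abelian. Since $\vphi$ is tame, its image has order prime to $l$. The image is therefore a quotient of the maximal prime-to-$l$ quotient of $\Gal(k_r)$. By \eqref{eq:gal-kr-presentation}, that quotient is $\hat{\bZ}'(1)^r$, which is abelian. Concretely, $\vphi$ factors through $\pi$, because $P$ is pro-$l$ and so its image is an $l$-group inside a group of prime-to-$l$ order, hence trivial. Consequently the image is a finite abelian subgroup of $G(k)$ of order prime to $l$. Because $k$ is algebraically closed, it is the group of $k$-points of a unique finite étale (constant) $k$-subgroup $A_\vphi\subset G$, and $\Char(k)\nmid|A_\vphi|$ since $|A_\vphi|=|A_\vphi(k)|$.

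Next I will construct $E$. Let $N=\ker\vphi$, an open normal subgroup of $\Gal(k_r)$, and let $M$ be its fixed field. Then $M/k_r$ is a finite Galois extension, and $\vphi$ induces an isomorphism $\Gal(M/k_r)\xrightarrow{\sim} A_\vphi(k)$. Set $E=\Spec(M)$, with $A_\vphi$ acting through this isomorphism. Since $A_\vphi$ is constant, $E$ is an $A_\vphi$-torsor, and it is connected because $M$ is a field. I will then compute the class of $E$ in $H^1(k_r,A_\vphi)$. For a constant finite group, the torsor attached to a surjection $\Gal(k_r)\to A_\vphi(k)$ is exactly the spectrum of the fixed field of its kernel. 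So the cocycle of $E$ is $\vphi$, regarded as a homomorphism into $A_\vphi(k)$. The only care needed here is a sign or inverse convention: if necessary, I will twist the action by inversion, or use $\sigma\mapsto\vphi(\sigma)$ in place of its inverse, so that the cocycle is literally $\vphi$ and not $\vphi^{-1}$ composed with inversion.

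Finally, the induced torsor. The map $H^1(k_r,A_\vphi)\to H^1(k_r,G)$ induced by the inclusion sends $[E]$ to $[E*_{A_\vphi}G]$. On cocycles, it is composition with $A_\vphi(k)\hookrightarrow G(k)$. This is the standard functoriality of nonabelian $H^1$ under the contracted product. Hence $[E*_{A_\vphi}G]=[\vphi]$.

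The expected obstacle is only bookkeeping. I need to make the identification of $E$'s cocycle with $\vphi$ compatible with the conventions for $H^1$ and for contracted products. I also need to handle non-smooth $G$; this causes no trouble, because $A_\vphi$ is étale and its $k_r$-points and $k_r^{\mathrm{sep}}$-points are as expected.
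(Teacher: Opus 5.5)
Your proposal is correct and matches the paper's proof essentially step for step. Both arguments kill $P$ because it is pro-$l$, take $M$ to be the fixed field of $\ker\vphi$, and set $E=\Spec(M)$ with the $A_\vphi$-action transported via $\bar{\vphi}\colon\Gal(M/k_r)\xrightarrow{\sim}A_\vphi(k)$. Both then observe that the pushforward cocycle is $\vphi$. Your one addition is the explicit check that $\mathrm{Im}(\vphi)$ is abelian, via the factorization through $\hat{\bZ}'(1)^r$, which the paper leaves implicit in the statement.
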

\begin{proof}
Since $\vphi$ is tame, its image has order prime to $l$. Hence the
pro-$l$ subgroup $P$ in \eqref{eq:gal-kr-presentation} is contained in
$\ker(\vphi)$, and $\vphi$ factors through the maximal prime-to-$l$
quotient of $\Gal(k_r)$. Let $M/k_r$ be the finite Galois extension corresponding to $\ker(\vphi)$. Then $\vphi$ induces an isomorphism
\[
    \bar{\vphi}\colon \Gal(M/k_r) \xlongrightarrow{\sim} A_\vphi(k).
\]
Let $E$ be the $A_\vphi(k)$-torsor over $k_r$ induced by the $\Gal(M/k_r)$-torsor $\Spec(M)\to\Spec(k_r)$ via $\bar{\vphi}$. Since $M$ is a field and $\bar{\vphi}$ is an isomorphism, $E\coloneqq\Spec(M)$ is connected. By definition, the induced $G$-torsor
$E*_{A_\vphi}G$ is represented by the composite cocycle
\[
  \Gal(k_r)
  \mathrel{\relbar\joinrel\twoheadrightarrow}
  \Gal(M/k_r)
  \xlongrightarrow{\bar{\vphi}}
  A_\vphi(k)
  \mathrel{\lhook\joinrel\longrightarrow}
  G(k)
\]
which by definition is equal to $\vphi$. Therefore
$[E*_{A_\vphi}G]=[\vphi]$ in $H^1(k_r,G)$, as desired.
\end{proof}
Under mild assumptions on $G$, every $G$-torsor over $k_r$ admits reduction of structure to a connected torsor under a finite \'etale abelian $k$-subgroup of $G$.

\begin{definition}\label{def.good_char}
    Let $G$ be a smooth affine algebraic $k$-group, let $R_u(G) \subset G$ be its unipotent radical and set $\ol{G} = G/R_u(G)$. We say that \emph{$\Char(k)$ is good for $G$} if $\Char(k)$ does not divide $|W(\ol{G})| \cdot [\ol{G}:\ol{G}^{\,\circ}]$, where
    $W(\ol{G})$ and $\ol{G}^{\, \circ}$ denote the Weyl group and the connected component of $\ol{G}$, respectively.
\end{definition}

\begin{lemma}\label{lem.serre_quest}
Let $G$ be a smooth affine algebraic $k$-group, let $r\geqslant 0$ be an integer, and let $X$ be a $G$-torsor over $k_r$.   Assume that $\Char(k)$ is good for $G$ as in Definition~\ref{def.good_char}. Then there exist a finite abelian $k$-subgroup $A \subset G$ of order prime to $\Char(k)$ and a connected $A$-torsor $E$ over $k_r$ such that $X\cong E*_AG$. In particular, $X$ admits reduction of structure to $A$.
\end{lemma}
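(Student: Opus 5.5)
By Lemma~\ref{lem.tame_hom_is_loop}, it suffices to show that the class of $X$ in $H^1(k_r,G)$ is represented by a tame continuous homomorphism $\vphi\colon \Gal(k_r)\to G(k)$. Indeed, the image of such a $\vphi$ is a finite quotient of $\Gal(k_r)$ of order prime to $l$. It therefore factors through the abelian group $\hat{\bZ}'(1)^r$ of \eqref{eq:gal-kr-presentation}, so $A_\vphi$ is abelian, and Lemma~\ref{lem.tame_hom_is_loop} produces the connected $A_\vphi$-torsor $E$ with $X\cong E*_{A_\vphi}G$. The work is in producing $\vphi$, and I would do this in three reduction steps.

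\textbf{Step 1: remove the unipotent radical.} Let $U=R_u(G)$ and $\ol G=G/U$. The group $U$ is smooth, connected and unipotent over the perfect field $k$, so it is split and has a filtration with successive quotients $\mathbb{G}_a$. Hence $H^1(F,{}_cU)=0$ for every twist, and $H^1(k_r,G)\to H^1(k_r,\ol G)$ is injective. Over $k$ a Levi decomposition need not exist in positive characteristic. Instead, I would lift a tame homomorphism $\Gal(k_r)\to \ol G(k)$ with finite image $\Gamma$ of order prime to $l$ to $G(k)$. The extension $1\to U(k)\to \pi^{-1}(\Gamma)\to\Gamma\to1$ splits because $U(k)$ has a filtration by $l$-groups or uniquely divisible groups and $|\Gamma|$ is prime to $l$ (Schur--Zassenhaus, applied inductively on the filtration). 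Surjectivity of $H^1(k_r,G)\to H^1(k_r,\ol G)$ also follows from the vanishing of $H^2$-type obstructions for split unipotent kernels over fields. So it suffices to treat $\ol G$ reductive up to finite components, with good characteristic.

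\textbf{Step 2: reduce to the normalizer of a maximal torus.} This is the main step. For connected reductive $\ol G^{\circ}$, the standard fact is that $H^1(F,N)\to H^1(F,\ol G^{\circ})$ is surjective for $N=N_{\ol G^\circ}(T)$. I would extend this to $\ol G$ using $N_{\ol G}(T)$, which meets every component by conjugacy of maximal tori. So $X$ reduces to $N'=N_{\ol G}(T)$, an extension of the finite group $\Phi=N'/T$ by $T$, and $|\Phi|=|W|\cdot[\ol G:\ol G^\circ]$ is prime to $l$ by goodness.

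\textbf{Step 3: torsors under $N'$ over $k_r$.} First, the image of $H^1(k_r,N')\to H^1(k_r,\Phi)=\mathrm{Hom}(\Gal(k_r),\Phi)/\sim$ consists of tame homomorphisms, since $|\Phi|$ is prime to $l$. Fix such a $\psi$ and twist. The fibre is then a quotient of $H^1(k_r,{}_\psi T)$, where ${}_\psi T$ is a torus split by a tame extension $F_n$. Using Hilbert 90 and the structure of $k_r^\times$ (each unit is $c\,t^{a}\cdot(1+\text{higher terms})$, and the principal units are $m$-divisible for $m$ prime to $l$), one shows the cocycles can be chosen with values in $T(k)_{\mathrm{tors}}$ of prime-to-$l$ order. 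A cleaner route is to use that $H^1(k_r,N')$ is computed by the tame quotient, i.e.\ $H^1(k_r,N')=H^1(\hat{\bZ}'(1)^r,N'(\bar k_r)^{P})$. Then one shows, by induction on $r$ via henselian reduction $H^1(F(\!(t)\!),H)\cong$ classes over the residue field $F$ twisted by loops (Bruhat--Tits/Gille style), that each class is represented by a homomorphism into $N'(k)$ with prime-to-$l$ image.

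The main obstacle is this last point: showing that an $N'$-cocycle over $k_r$ is cohomologous to a continuous homomorphism $\Gal(k_r)\to N'(k)$ of prime-to-$l$ order. My first attempt would be the inductive residue argument, using that $H^1(k_{r-1}(\!(t)\!),H)$ for smooth $H$ with $|\pi_0|$ prime to $l$ is governed by tame loops. The \'etale and toral pieces are handled respectively by the tame structure \eqref{eq:gal-kr-presentation} and by the $m$-divisibility of principal units.
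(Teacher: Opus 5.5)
Your opening reduction to a tame homomorphism via Lemma~\ref{lem.tame_hom_is_loop} is fine. So is your Step~1, which is essentially the paper's own second step: split the extension of the finite image by $R_u(G)$, then conclude by twisting. One small point there: a $\mathbb{G}_a$-filtration of $U$ need not be stable under the twisting action, so the vanishing of $H^1(F,{}_cU)$ really rests on \cite[Theorem B.3.4]{conrad-gabber-prasad}, which says a form that splits over $F^{\mathrm{sep}}$ is already split. Your Step~2 is also sound.

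The genuine gap is Step~3, which is where the whole content of the lemma lies, and which you leave as a plan. The route ``fix $\psi$ and twist'' needs a base point. Describing the fibre over $[\psi]$ as a quotient of $H^1(k_r,{}_\psi T)$ only helps if the fibre already contains a class given by a homomorphism $\Gal(k_r)\to N'(k)$ that lifts $\psi$ and has prime-to-$l$ image. Producing one is an embedding problem with kernel $T(k)$: you need commuting finite-order lifts in $N'(k)$ of the images of topological generators of $\hat{\bZ}'(1)^r$. Nothing in your sketch solves this, and if you twist by $\xi$ itself the argument is circular. Even granting a base point, you only assert that every class of $H^1(k_r,{}_\psi T)$ has a representative with values in $T(k)_{\mathrm{tors}}$ making the combined cocycle a homomorphism. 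The alternative inductive residue / Bruhat--Tits argument is not carried out at all.

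The paper avoids working over $k_r$ at this point. For $G^\circ$ reductive and $\Char(k)$ good, it cites \cite[Corollaire 18]{lucchini2015groupe}: there is a finite $k$-subgroup $S\subset G$ of order prime to $\Char(k)$ with $H^1(K,S)\to H^1(K,G)$ surjective for every field $K/k$. Since $S$ is constant, an $S(k)$-valued cocycle is just a continuous homomorphism, automatically tame, and Lemma~\ref{lem.tame_hom_is_loop} finishes.

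You can obtain the same from your Step~2 over an arbitrary field $K$, as follows. Let $m=|\Phi|$. The class of the extension $N'(k)$ of $\Phi$ by $T(k)$ in $H^2(\Phi,T(k))$ is $m$-torsion, so it comes from $H^2(\Phi,T[m])$. This gives a finite $S_m\subset N'(k)$ with $S_mT=N'$ and $S_m\cap T=T[m]$; set $S=S_m\cdot T[m^2]$. For $\xi\in H^1(K,N')$ with image $\psi$, the twist ${}_\xi(N'/S)$ is a torsor under ${}_\psi T$, after identifying $T/T[m^2]$ and $T/T[m]$ with $T$ via power maps. Its class is $m$ times the class of ${}_\xi(N'/S_m)$. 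The latter class is killed by $[K_\psi:K]$, which divides $m$, because $\xi$ becomes trivial over the field $K_\psi$ cut out by $\psi$ (Hilbert 90 for the split torus $T$). Hence ${}_\xi(N'/S)(K)\neq\emptyset$, so $\xi$ comes from $H^1(K,S)$.
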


\begin{proof}
Set $\ol{G} \coloneqq G/R_u(G)$, where $R_u(G)$ is the unipotent radical of $G$. We break the proof into two steps.

\smallskip
(1) We start with the special case where $G^{\circ}$ is reductive. Equivalently, we assume $\ol{G} = G$. By \cite[Corollaire 18]{lucchini2015groupe}, there exists a finite $k$-subgroup $S\subset G$ of order prime to $\Char(k)$ such that for every field extension $K/k$, the map $H^1(K,S)\to H^1(K,G)$ is surjective. In particular, $X$ admits reduction of structure to $S$, that is, $[X] = [\vphi]$ in $H^1(k_r,G)$ for some continuous homomorphism $\vphi\colon \Gal(k_r)\to G(k)$ taking values in $S$. Since $\Char(k)\nmid |S|$, the homomorphism $\vphi$ is tame. The conclusion now follows from Lemma~\ref{lem.tame_hom_is_loop}.

\smallskip
(2) Assume $G^{\circ}$ is not reductive and let $\pi\colon G\to \ol{G}$ be the canonical surjection. By the previous step, there exist a finite abelian $k$-subgroup $A\subset G$ of order prime to  $\Char(k)$ and a connected $A$-torsor $E$ such that $\pi_*(X) \cong E \ast_A \ol{G}$. Write $j\colon A\to \ol{G}$ for the inclusion map. The extension
$$ 1\longrightarrow R_u(G) \longrightarrow \pi^{-1}(A) \longrightarrow A \longrightarrow 1$$
splits by \cite[Expos\'e XVII, Théorème 5.1.1]{SGA3}. Thus, we obtain an embedding $i\colon A \to G$ such that $\pi\circ i = j$ is the given inclusion of $A$ into $\ol{G}$. Therefore
\begin{equation}\label{e.lem_serre_quest}
\pi_*[E\ast_A G] = [E\ast_{A} \ol{G}] = \pi_*[X],
\end{equation}
where $\pi_*\colon H^1(k_r,G)\to H^1(k_r,\ol{G})$ is the pushforward map.
Assume $[a] = [E]$ for some $\Gal(k_r)$-cocycle $a$ with values in $A(k)$. Let ${}_{a}R_{u}(G)$ be the twisted unipotent group defined by $a$ \cite[Chapter I, Section 5.3]{serre1997galois}. Then ${}_{a}R_{u}(G)$ becomes split over a separable closure of $k_r$, and hence must already be split over $k_r$ by \cite[Theorem B.3.4]{conrad-gabber-prasad}. In particular, we have $H^1(k_r, {}_{a} R_u(G)) = \{\ast\}$. Now \eqref{e.lem_serre_quest} implies $X \cong E\ast_A G$ by a standard twisting argument; see \cite[Chapter I, Section 5.5, Corollary 2]{serre1997galois}.
\end{proof}

\section{Proof of Theorem~\ref{thm.main}} %

We first prove Theorem~\ref{thm.main} in the special case, where
$L = k_r$. Under this assumption, both parts reduce to the following. 

\begin{proposition}\label{prop.special_case}
    Let $G$ be an algebraic $k$-group and let $A \subset G$ be a finite abelian $k$-subgroup of rank $r$.
    Assume that $\Char(k)\not\mid |A|$.
    Then the following conditions are equivalent.
    
    \begin{enumerate}
        \item[(a)] The abelian $k$-subgroup $A$ is toral.
        \item[(b)] For every field extension $K/k$, the map of pointed sets $H^1(K, A) \to H^1(K, G)$ is trivial.
        \item[(c)] The map of pointed sets $H^1(k_r, A) \to H^1(k_r, G)$ is trivial.
        \item[(d)] For some (equivalently, every) connected $A$-torsor $E$ over $k_r$, the induced $G$-torsor $E*_AG$ is split.
    \end{enumerate}
\end{proposition}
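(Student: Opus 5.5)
The implications (a)$\Rightarrow$(b)$\Rightarrow$(c)$\Rightarrow$(d) are formal, so the proof reduces to (d)$\Rightarrow$(a); that is where the real work lies.

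For (a)$\Rightarrow$(b), suppose $A\subset T$ for a $k$-torus $T$. Then $H^1(K,A)\to H^1(K,G)$ factors through $H^1(K,T)$. Since $T$ is split over $k$, Hilbert 90 gives $H^1(K,T)=\{\ast\}$. The implication (b)$\Rightarrow$(c) is the special case $K=k_r$. For (c)$\Rightarrow$(d), the class $[E*_AG]$ is the image of $[E]$ under the trivial map. The word ``equivalently'' in (d) should come out of the proof of (d)$\Rightarrow$(a): that proof will use an arbitrary connected $E$, and (a) in turn gives splitting for every $E$.

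For (d)$\Rightarrow$(a), write $l$ for the characteristic exponent. By \eqref{eq:gal-kr-presentation}, a connected $A$-torsor $E$ corresponds to a continuous surjection $\vphi\colon \Gal(k_r)\to A(k)$. Since $|A|$ is prime to $l$, $\vphi$ factors through $\hat{\bZ}'(1)^r$. Because $A$ has rank $r$, the images $a_1,\dots,a_r$ of the standard topological generators generate $A(k)$. After renumbering, $E=\Spec(F_n)$ with $\Gal(F_n/k_r)\cong\prod\mu_{n_i}\cong A(k)$.

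The splitting of $E*_AG$ is equivalent to an $A$-equivariant morphism $\Spec F_n\to G$. Concretely, it is an element $g\in G(F_n)$ with $\sigma(g)=g\,\vphi(\sigma)$ for all $\sigma\in \Gal(F_n/k_r)$.

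First I reduce to the reductive connected case.
\begin{enumerate}
\item Pushing forward to the finite \'etale group $\pi_0(G)$ gives a split torsor induced from the connected torsor $E$. This forces $A\to\pi_0(G)$ to be trivial, so $A\subset G^\circ$.
\item Next I pass to $G_{\mathrm{red}}^\circ$ and then to $\Glin$. The quotient is an abelian variety $B$, and $B(F_n)=B(k)$ after composing with reduction, since $B$ is proper and contains no rational curves. From this I expect the image of $A$ in $B$ to be trivial.
\item Finally I quotient by $R_u$, using the splitting argument of Lemma~\ref{lem.serre_quest}(2). A toral subgroup of $\ol{G}$ lifts to a toral subgroup of $G$, by conjugacy of complements \cite[Expos\'e XVII]{SGA3}.
\end{enumerate}

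So assume $G$ is reductive. I argue by induction on $r$, peeling off the last variable. Over $K=k_{r-1}(\!(t_r)\!)$ (more precisely over $F'(\!(t_r)\!)$, with $F'$ the $s_1,\dots,s_{r-1}$ part), I use the Iwasawa/Cartan decomposition $G(F')(\!(s_r)\!)=G[\![s_r]\!]\,T(F'(\!(s_r)\!))\,G[\![s_r]\!]$. Applying the relation $\sigma_r(g)=g\,a_r$ and comparing cocharacter parts, I want to show the following: after replacing $g$ by $g h$ with $h\in G(K)$, and conjugating $A$ by an element of $G(k)$ obtained by specialising $s_i\mapsto 0$ in a suitable integral model, the element $a_r$ lies in a torus $T$ and the rest of $A$ centralises it. The remaining generators then give a split torsor for $C_G(T')$ of smaller rank. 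Here $T'$ is a torus through $a_r$, or more simply I reduce into the centraliser $C_G(a_r)$, whose identity component is reductive. The induction hypothesis puts $\langle a_1,\dots,a_{r-1}\rangle$ in a maximal torus of $C_G(a_r)^\circ$, which contains $a_r$, since $a_r$ is central there and lies in every maximal torus of it. This yields (a).

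The main obstacle is precisely this step: extracting, from the equivariant point $g\in G(F_n)$, a $k$-point of a suitable centraliser. I would first try the Bruhat--Tits fixed-point formulation. Splitting means $A$ fixes a point of the building of $G$ over $F_n$ after twisting, so $A$ lands in a parahoric whose reductive quotient over the residue field $F'$ still receives a split torsor, now of rank $r-1$. The torsor $E*_AG$ becomes an $A$-torsor over $F'$ with one fewer variable. This is the loop-torsor mechanism of Chernousov--Gille--Pianzola, and the care needed is to ensure the reduction retains the toral property up to $G(k)$-conjugacy.
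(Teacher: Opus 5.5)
Your implications (a)$\Rightarrow$(b)$\Rightarrow$(c)$\Rightarrow$(d) are fine and match the paper. The real gap is (d)$\Rightarrow$(a), the only substantive part, which you do not actually prove.

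After your reductions, everything rests on the inductive step for reductive $G$. There you only state the goal: that splitting of $E*_AG$ over $k_r$ forces the torsor induced from $\langle a_1,\dots,a_{r-1}\rangle$ to $C_G(a_r)$, over a field with one fewer variable, to be split. The facts you do record (that $a_r$ lies in a torus and is centralized by the rest of $A$) hold for every finite abelian $A$ of order prime to $\Char(k)$, so they carry no information; the obstruction to torality is global. For example, in $\PGL_2$ take the Klein four-group generated by the images of $a_2=\operatorname{diag}(1,-1)$ and $a_1=\bigl(\begin{smallmatrix}0&1\\1&0\end{smallmatrix}\bigr)$. Each element is toral. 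However, $C_G(a_2)$ is the normalizer of the diagonal torus, and $a_1$ lies in its non-identity component. So ``put $\langle a_1,\dots,a_{r-1}\rangle$ in a maximal torus of $C_G(a_r)^\circ$'' presupposes exactly what has to be shown.

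Your Bruhat--Tits paragraph points to the loop-torsor machinery rather than giving an argument. That route is essentially Gille's theorem \cite[Corollary 4.15]{gille:hal-04621048} for $G^\circ$ reductive; citing it would be legitimate, but you have not reproduced its proof.

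There are also two smaller slips. First, a connected $E$ need not be $\Spec F_n$ for the given parameters $t_i$, even after renumbering. For $A=\bZ/2\times\bZ/4$ with $\vphi(\sigma_1)=(1,1)$ and $\vphi(\sigma_2)=(0,1)$, the kernel is not a coordinate subgroup. Second, in step (2), passing the splitting from $G^\circ_{\mathrm{red}}$ to $\Glin$ requires $G^\circ_{\mathrm{red}}(k_r)\to (G^\circ_{\mathrm{red}}/\Glin)(k_r)$ to be surjective. This is true by Hensel's lemma, but you do not address it.

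The missing idea is one you already half-use in step (2) for the abelian-variety quotient. Write $E=\Spec K$. The henselian rank-$r$ valuation of $k_r$ extends uniquely to $K$, so its valuation ring $V$ is $A(k)$-stable with residue field $k$.

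Now apply this to $G/B$, where $B$ is a Borel subgroup of $\Glin$, rather than to the abelian variety; $G/B$ is proper over $k$.
\begin{enumerate}
\item A trivialization of $E*_AG$ gives an $A$-equivariant morphism $\Spec K\to G/B$.
\item The valuative criterion extends it to $\Spec V\to G/B$. Existence holds for arbitrary valuation rings, and uniqueness makes the extension $A$-equivariant.
\item The $A$-fixed closed point of $\Spec V$ therefore maps to an $A$-fixed $k$-point $gB$, so $A\subset gBg^{-1}$.
\item Since $gBg^{-1}$ is connected solvable and $\Char(k)\nmid |A|$, $A$ lies in a maximal torus.
\end{enumerate}
This handles arbitrary $G$ and arbitrary connected $E$ at once, with no reduction to the reductive case, no induction on $r$ and no Bruhat--Tits theory. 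It is exactly the paper's argument.
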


In (d), note that a connected $A$-torsor over $k_r$ exists: indeed, this is equivalent to the existence of a surjection $\mathrm{Gal}(k_r)\to A(k)$, which follows from \eqref{eq:gal-kr-presentation}.

In the case where $G^{\circ}$ is reductive but $k$ is an arbitrary field, Proposition~\ref{prop.special_case} was proved by Gille using Bruhat-Tits theory; see \cite[Corollary 4.15]{gille:hal-04621048}.

\begin{proof}
(a) $\Longrightarrow$ (b). Suppose $A$ lies in a torus $T \subset G$. Then the morphism $H^1(K, A) \to H^1(K, G)$ factors through $H^1(K, T)$,
and $H^1(K, T) = 1$ by Hilbert's Theorem 90.

\smallskip
(b) $\Longrightarrow$ (c). This is immediate.
\smallskip

(c) $\Longrightarrow$ (d). By definition $[E*_AG]$ is the image of 
$[E]$ under the map $H^1(k_r, A) \to H^1(k_r, G)$, and hence by (c) the $G$-torsor $E*_AG$ is split.

\smallskip
(d) $\Longrightarrow$ (a). Let $H = \Glin \subset G$ be the smooth linear part of $G$ (see \ref{subsect.notation}).
 Note that the morphism $G/H \to G/G_{\mathrm{red}}^{\circ}$ is proper. Indeed, after the fppf base change $G\to G/G^{\circ}_{\mathrm{red}}$, it becomes the first projection $G\times_k (G^{\circ}_{\mathrm{red}}/H)\to G$, which is proper because  $G^{\circ}_{\mathrm{red}}/H$ is an abelian variety. Choose a Borel subgroup $B \subset H$. A similar descent argument shows $G/B \to G/H$  is proper because $H/B$ is proper. Since $G/G^{\circ}_{\mathrm{red}} \to \Spec k$ is finite, the composition
 $$G/B \to G/H \to G/G^{\circ}_{\mathrm{red}} \to \Spec k$$
 is also proper. We conclude that $G/B$ is proper.
 
The inclusion $A\hookrightarrow G$ induces an $A$-equivariant map $E=E*_AA\hookrightarrow E*_AG$. Since $E*_AG$ is split over $k_r$, there exists a $G$-equivariant isomorphism $E*_AG\xrightarrow{\sim} G_{k_r}$ over $k_r$. Composing this isomorphism with the natural projection $G_{k_r} \to (G/B)_{k_r}$, we obtain an $A$-equivariant morphism $\pi \colon E \to G/B$. 

 Since $\Char(k)\nmid |A|$, the connected $A$-torsor $E$ is \'etale, and hence it is of the form $E = \Spec(K)$ for some Galois field extension $K/k_r$ with Galois group $A(k)$. Since the rank-$r$ valuation on $k_r$ is henselian, it extends uniquely to a rank-$r$ valuation $v$ on $K$. The uniqueness of $v$ implies that $v$ is $A(k)$-invariant. Let $V\subset K$ be the valuation ring of $v$. Then $V$ is a local ring with fraction field $K$ and residue field $k$. Since $v$ is $A(k)$-invariant, $V$ is $A(k)$-stable, and hence the canonical morphism $E=\Spec(K)\longrightarrow \Spec(V)$ is $A(k)$-equivariant, or equivalently, $A$-equivariant. In summary, we obtain the following commutative square of $A$-equivariant maps, where $A$ acts trivially on $\Spec(k)$:
\[
\begin{tikzcd}
E \arrow[r, "\pi"] \arrow[d] & G/B \arrow[d] \\
\Spec(V) \arrow[r] & \Spec(k).
\end{tikzcd}
\]
Since $G/B$ is proper over $k$, the valuative criterion for properness tells us that there is a morphism
$h \colon \Spec(V) \to G/B$ such that the resulting diagram
\[
\begin{tikzcd}
E \arrow[r, "\pi"] \arrow[d] & G/B \arrow[d] \\
\Spec(V) \arrow[ur, "h"] \arrow[r] & \Spec(k)
\end{tikzcd}
\]
commutes. Since $\pi$ is $A$-equivariant, $h$ is $A$-equivariant as well. In particular, since the closed point of $V$ corresponding to the maximal ideal of $v$ is fixed by $A$, so is its image in $G/B$. Therefore, there exists $g\in G(k)$ such that $gB \in (G/B)(k)$ is fixed by $A$. We conclude that $A$ is contained in the $G$-stabilizer $B'\coloneqq gBg^{-1}$ of $gB$. Since $B$ is a Borel subgroup of $H = \Glin$, so is $B'$. Since $\Char(k)\nmid |A|$ and $k$ is algebraically closed, the abelian $k$-subgroup $A$ lies in a maximal torus of $B'$, and hence of $G$, as desired.
\end{proof}

\begin{proof}[Conclusion of the proof of Theorem~\ref{thm.main}]
Since $E$ is connected, $E = \Spec(K)$ for some Galois field extension $K/k_r$ with Galois group $A(k)$. The extension $K/k_r$ defines a surjective continuous homomorphism $\vphi'\colon\Gal(k_r)\to A(k)$. Let $\vphi\colon\Gal(k_r) \to G(k)$ be the composite of $\vphi'$ with the inclusion $A(k) \subset G(k)$. By construction, we have $[E*_AG] = [\vphi]$ in $H^1(k_r,G)$.

Let $L/k_r$ be a finite splitting field of $E*_AG$, consider $\Gal(L)$ as a subgroup of $\Gal(k_r)$ via the restriction map, and let $\vphi_L$ be the restriction of $\vphi$ to $\Gal(L)$, so that $[(E*_AG)_L] = [\vphi_{L}]$ by functoriality. The fields $L$ and $k_r$ are isomorphic over $k$; see \cite[Corollary 5.4]{gille2009lower}. Moreover, by Lemma~\ref{lem.tame_hom_is_loop} the class $[\vphi_{L}]\in H^1(L,G)$ is induced from a connected $A_{\vphi_L}$-torsor $E'$ over $L$.  Therefore $[E'\ast_{A_{\vphi_L}}G] = [\vphi_{L}]$ being split implies that $A_{\vphi_L}$ is contained in a torus $T \subset G$ by Proposition~\ref{prop.special_case}. Consider the composite surjection
$$\overline{\vphi}\colon \Gal(k_r) \xlongrightarrow{\vphi'} A(k) \longrightarrow A(k)/A_T(k).$$
Since $\Gal(L) \subset \ker \overline{\vphi}$, the index $[A:A_T]$ divides $[\Gal(k_r):\Gal(L)]$. Now (1) of the theorem follows because $[\Gal(k_r):\Gal(L)]$ is the separable degree of the extension $L/k_r$ and hence divides $[L:k_r]$.

\smallskip
Assume further that the assumptions of (2) are satisfied. Note that $[\Gal(k_r):\Gal(K)]$ divides $[K:k_r]$ because it is the separable degree of the extension $K/k_r$. Therefore, under the assumptions of (2), $[\Gal(k_r):\Gal(K)]$ is prime to $|A|$. We conclude that the restriction of $\ol{\vphi}$ to $\Gal(K)$ is surjective. Let $A'\coloneqq A(k)/A_T(k)$. Since $\vphi(\Gal(L))\subset A_T(k)$, quotienting by $\Gal(L)$ gives a surjective homomorphism of Galois groups:
\begin{equation}\label{e.A/A_T_is_quotient}
    \Gal(L/K) = \Gal(K)/\Gal(L) \twoheadrightarrow  A'.
\end{equation}
Therefore $A'$ is a homomorphic image of $\Gal(L/K)$. It remains to show that $A'$ is isomorphic to a subgroup of $\Gal(L/K)$. Note that $K$ and $k_r$ are isomorphic as fields over $k$ by \cite[Corollary 6.4]{gille2009lower}.
It follows from~\eqref{eq:gal-kr-presentation}
that $\Gal(L/K)$ is a semi-direct product $\Gal(L/K) = \mathcal{P} \rtimes \mathcal{A}$ for some finite abelian subgroup $\mathcal{A}$ of order prime to $l$ and a normal subgroup $\mathcal{P}$ of order a power of $l$. Here $l$ denotes the characteristic exponent of $k$, as in
Section~\ref{sect.galois-laurent}. 


Since $A'$ is of order prime to $l$, the surjection \eqref{e.A/A_T_is_quotient} restricts to a surjection $\mathcal{A} \twoheadrightarrow A'$. Since $\mathcal{A}$ is a finite abelian group, the existence of a surjection $\mathcal{A} \twoheadrightarrow A'$ implies the existence of an embedding $A'\hookrightarrow \mathcal{A}$ by the structure theorem of finite abelian groups. Composing with the inclusion into $\Gal(L/K)$, we see that $A'$ is isomorphic to a subgroup of $\Gal(L/K)$.
\end{proof}

\section{Variations on the torsion index}
\label{sect.t_2=t_3}

In \eqref{eq:t}-\eqref{eq:t3}, we defined the torsion indices $t(G)$, $t_2(G)$ and $t_3(G)$ for any affine $k$-group $G$. As we pointed out in the Introduction, we have 
\begin{equation} \label{e.divisibilities}
t_3(G) \mid t_2(G) \mid t(G).
\end{equation}
In this section we show that under mild characteristic assumptions, $t_3(G)$, $t_2(G)$ and $t(G)$ have the same prime factors (Lemma~\ref{lem.same-prime-factors}), and $t_2(G) = t_3(G)$ (Proposition~\ref{prop.torsion-a}).

\begin{lemma} \label{lem.same-prime-factors}
Let $G$ be a reductive $k$-group, and suppose that $\Char(k)$ is not a torsion prime for $G$. Then the integers $t_3(G)$, $t_2(G)$ and $t(G)$ have the same prime factors.
\end{lemma}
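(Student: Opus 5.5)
By \eqref{e.divisibilities} we have $t_3(G)\mid t_2(G)\mid t(G)$. Hence every prime factor of $t_3(G)$ divides $t_2(G)$, and every prime factor of $t_2(G)$ divides $t(G)$. It therefore suffices to show that every prime factor $p$ of $t(G)$ divides $t_3(G)$. For such a $p$ we will produce a finite abelian $k$-subgroup $A\subset G$ with $\Char(k)\nmid |A|$ whose depth is divisible by $p$.

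Let $p$ be a prime dividing $t(G)$, so that $p$ is a torsion prime for $G$ in the sense of definition (i). The hypothesis that $\Char(k)$ is not a torsion prime gives $p\neq \Char(k)$. By Lemma~\ref{lem.torsion-primes}, $p$ is also a torsion prime in the sense of Steinberg's definition (ii). By Steinberg's theorem \cite[Theorem 2.28]{steinberg-torsion}, quoted in the proof of Lemma~\ref{lem.torsion-primes}, $G$ then contains a non-toral finite elementary abelian $p$-subgroup $A$. Since $p\neq\Char(k)$, $A$ is étale, of order prime to $\Char(k)$, and corresponds to a finite subgroup of $G(k)$.

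We now compute the depth of $A$. For every maximal torus $T$ of $G$ we have $A_T\neq A$, because $A$ is non-toral: if $A_T=A$ then $A\subset T$. Since $A$ is a $p$-group, the index $[A:A_T]$ is a nontrivial power of $p$, and so is divisible by $p$. The depth of $A$ is the gcd of these indices over all maximal tori $T$, so $p$ divides $\depth(A)$. Consequently $p\mid t_3(G)$, which is what we needed.

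The main point requiring care is the existence of a non-toral elementary abelian $p$-subgroup in arbitrary characteristic $\neq p$, rather than only over $\mathbb{C}$. Steinberg's result is stated over an arbitrary algebraically closed field with $p$ different from the characteristic, and Lemma~\ref{lem.torsion-primes}(2) makes the notion of torsion prime field-independent, so this works. If one wanted to avoid citing Steinberg, an alternative route is available. By Lemma~\ref{lem.serre_quest} together with Proposition~\ref{prop:tG-well-defined}, $p\mid t(G)$ gives a torsor over some $k_r$ of index divisible by $p$, i.e.\ $p\mid t_2(G)$. One would then need the equality $t_2=t_3$ in its $p$-part, which is harder. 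The direct argument above is therefore the one I would use.
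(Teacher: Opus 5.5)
Your proof is correct and is essentially the paper's own argument. You use $t_3(G)\mid t_2(G)\mid t(G)$ to reduce to showing that each torsion prime $p\neq\Char(k)$ divides $t_3(G)$, then invoke Steinberg's theorem \cite[Theorem 2.28]{steinberg-torsion} to obtain a non-toral elementary abelian $p$-subgroup $A$, for which $p\mid\depth(A)\mid t_3(G)$. The alternative route in your last paragraph is unnecessary, and its first step does not follow from the two results you cite alone, but your proof does not depend on it.
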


\begin{proof} 
In view of~\eqref{e.divisibilities}, it suffices to show that every prime $p$ dividing $t(G)$ (i.e., every torsion prime of $G$) also divides $t_3(G)$. Indeed, if $p$ is a torsion prime of $G$,
then by~\cite[Theorem 2.28]{steinberg-torsion}, there exists a non-toral elementary abelian $p$-subgroup $(\bZ/p\bZ)^r \simeq A \subset G$ for some $r \geqslant 1$. (In fact, we can take $r \leqslant 3$, though we will not need this.) Since $A$ is non-toral, $p$ divides $\depth(A)$. On the other hand, $\depth(A)$ divides $t_3(G)$ by the definition of $t_3$.
\end{proof}

\begin{proposition} \label{prop.torsion-a}  Let $G$ be a smooth affine $k$-group. Assume that $\Char(k)$ is good for $G$ in the sense of Definition~\ref{def.good_char}. Then $t_2(G) = t_3(G)$.
\end{proposition}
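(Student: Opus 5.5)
We already know $t_3(G)\mid t_2(G)$ from Theorem~\ref{thm.main}(1), so the plan is to prove the reverse divisibility $t_2(G)\mid t_3(G)$. It suffices to fix $r\geqslant 0$ and a $G$-torsor $X$ over $k_r$ and show that $\ind(X)$ divides $\depth(A)$ for a suitable finite abelian $A\subset G$ with $\Char(k)\nmid|A|$. Since $\depth(A)\mid t_3(G)$, this gives $\ind(X)\mid t_3(G)$ for all such $X$, hence $t_2(G)\mid t_3(G)$.

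\textbf{Step 1: choose $A$.} Because $\Char(k)$ is good for $G$, Lemma~\ref{lem.serre_quest} gives a finite abelian $k$-subgroup $A\subset G$ of order prime to $\Char(k)$ and a connected $A$-torsor $E=\Spec(M)$ over $k_r$ with $X\cong E*_AG$. Here $M/k_r$ is Galois with group $A(k)$, corresponding to a surjection $\vphi'\colon\Gal(k_r)\to A(k)$.

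\textbf{Step 2: show $\ind(X)$ divides each index $[A:A_T]$.} Fix a maximal torus $T$ of $G$ and let $L\subset M$ be the fixed field of $A_T(k)$, so that $[L:k_r]=[A:A_T]$. Over $L$, the torsor $E_L$ reduces to a connected $A_T$-torsor $\Spec(M)$ over $L$. More precisely, the restriction of $\vphi'$ to $\Gal(L)$ takes values in $A_T(k)$, so $X_L\cong E'*_{A_T}G$ for the $A_T$-torsor $E'$ determined by $\vphi'|_{\Gal(L)}$. The class of $E'$ lies in the image of $H^1(L,A_T)\to H^1(L,T)=1$ by Hilbert's Theorem 90, so $X_L$ is split. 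Therefore $\ind(X)\mid[A:A_T]$. The key point is that this argument only uses the factorization through $H^1(L,T)$; it does not need $L\cong k_r$ or Proposition~\ref{prop.special_case}.

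\textbf{Step 3: pass to the gcd.} Since $\ind(X)$ divides $[A:A_T]$ for every maximal torus $T$, it divides their greatest common divisor, which is $\depth(A)$ by definition. This completes the argument.

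The only step requiring care is Step 1, namely that every torsor over $k_r$ is induced from a finite abelian subgroup of order prime to $\Char(k)$. This is exactly where the good-characteristic hypothesis enters, and it is handled by Lemma~\ref{lem.serre_quest}. Step 2 needs only the observation that $A_T\subset T$ forces $X_L$ to be trivial, and Step 3 is formal.
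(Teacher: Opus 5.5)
Your proof is correct and follows the paper's route. You use Lemma~\ref{lem.serre_quest} to write $X\cong E*_AG$, then the Hilbert~90 argument of your Step~2 (which is exactly Lemma~\ref{lem.index_equals_depth}(1)) to get $\ind(X)\mid\depth(A)$, and Theorem~\ref{thm.main}(1) for $t_3(G)\mid t_2(G)$; the paper merely packages the two divisibilities as the single equality $\ind(E*_AG)=\depth(A)$ of Lemma~\ref{lem.index_equals_depth}(2).
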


Our proof of Proposition~\ref{prop.torsion-a} will rely on the following lemma. This lemma may be viewed as a partial converse to Theorem~\ref{thm.main}; it will be used again later in the paper.

\begin{lemma} \label{lem.index_equals_depth}
    Let $G$ be an algebraic $k$-group, let $A\subset G$ be a finite abelian subgroup such that $\Char(k)\not\mid |A|$, let $r\geqslant 0$ be an integer, let $E$ be a connected $A$-torsor over $k_r$, and let $E*_AG$ be the $G$-torsor over $k_r$ induced by $E$.
    
    \smallskip
    (1) For every torus $T\subset G$, the $G$-torsor $E*_AG$ is split by a Galois extension $L/k_r$ such that $\Gal(L/k_r) \cong A(k)/A_T(k)$.

    \smallskip
    (2) 
$\ind(E*_AG) = \depth(A)$.
        \end{lemma}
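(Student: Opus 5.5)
For part (1) we work with the Galois-theoretic description used in the proof of Theorem~\ref{thm.main}. Since $E$ is connected, $E=\Spec(M)$ for a Galois extension $M/k_r$, and we get a surjection $\vphi'\colon\Gal(k_r)\twoheadrightarrow A(k)$. Let $\vphi$ be its composite with $A(k)\subset G(k)$, so that $[E*_AG]=[\vphi]$. Given a torus $T\subset G$, we take $L\subset M$ to be the fixed field of the subgroup $A_T(k)\subset A(k)=\Gal(M/k_r)$. This subgroup is normal because $A$ is abelian, so $L/k_r$ is Galois with $\Gal(L/k_r)\cong A(k)/A_T(k)$. The restriction $\vphi_L=\vphi|_{\Gal(L)}$ has image $A_T(k)$, which is contained in $T(k)$. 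Hence $[\vphi_L]$ lies in the image of $H^1(L,T)\to H^1(L,G)$, and this image is trivial by Hilbert's Theorem 90. (Here $T$ is a split torus over $k$, since $k$ is algebraically closed.) Therefore $(E*_AG)_L$ is split.

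For part (2) we prove the two divisibilities. First, part (1) gives, for each maximal torus $T$, a splitting field of degree $[A:A_T]$. Thus $\ind(E*_AG)$ divides $[A:A_T]$ for every $T$, and hence divides their gcd, which is $\depth(A)$. Conversely, let $L_1,\dots,L_n$ be finite extensions of $k_r$ splitting $E*_AG$ with $\gcd_i[L_i:k_r]=\ind(E*_AG)$. Theorem~\ref{thm.main}(1) gives, for each $i$, a maximal torus $T_i$ with $[A:A_{T_i}]\mid[L_i:k_r]$. Since $\depth(A)$ divides every $[A:A_{T_i}]$, it divides every $[L_i:k_r]$, and so it divides $\ind(E*_AG)$.

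There is one point to check. Part (1) is stated for an arbitrary torus, but the depth uses maximal tori. Since every torus lies in a maximal torus $T'$ and $A_T\subset A_{T'}$, we may restrict to maximal tori, or we may simply apply (1) to maximal tori directly. This step is routine, so no real obstacle is expected. The substantive input is Theorem~\ref{thm.main}(1), which we may assume.
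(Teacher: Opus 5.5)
Your proposal is correct and follows the paper's proof essentially step for step. In part (1) you take the fixed field of $A_T(k)$ inside the $A(k)$-Galois extension, reduce the structure group to $A_T\subset T$, and conclude with Hilbert's Theorem 90; in part (2) you obtain the two divisibilities from part (1) and from Theorem~\ref{thm.main}(1).
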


\begin{proof}
(1)  Let $\vphi\colon \Gal(k_r)\to A(k)$ be a continuous homomorphism such that $[E] = [\vphi]$ in $H^1(k_r,A)$. Since $E$ is connected, it is the spectrum of a Galois field extension $K/k_r$. The kernel of $\vphi$ is the absolute Galois group $\Gal(K)\subset \Gal(k_r)$ and $\vphi$ defines an isomorphism $\overline{\vphi}\colon\Gal(K/k_r) \xrightarrow{\sim} A(k)$. Let $L\subset K$ be the fixed field of $\overline{\vphi}^{\; -1}(A_T(k))$. By the Galois correspondence, 
$\vphi^{-1}(A_T(k))$ is precisely $\Gal(L)$ and  $L/k_r$ is Galois with Galois group $A(k)/A_T(k)$. The torsor $E_L$ admits reduction of structure to $A_T$ because $\vphi(\Gal(L))= A_T(k)$. We conclude that $(E*_A G)_L$ admits reduction of structure to $A_T$ and therefore to $T$. By Hilbert's Theorem 90, we have $H^1(L,T) = 0$, and hence $(E*_AG)_L$ is split.

\smallskip
(2) By (1), for every maximal torus $T\subset G$ there exists a splitting field $L_T$ of $E*_AG$ such that $[L_T:k_r] = [A:A_T]$. Therefore $\ind(E*_AG)$ divides $[A:A_T]$ for every torus $T\subset G$, and hence \[ \ind(E*_AG)\mid \depth(A). \]

Conversely, for every splitting field $L$ of $E*_AG$, by Theorem~\ref{thm.main}(1) there exists a torus $T_L \subset G$ such that $[A:A_{T_L}]$ divides $[L:k_r]$. Since $\depth(A)$ divides $[A:A_{T_L}]$, it divides $[L:k_r]$. As this holds for every splitting field $L/k_r$ of $E*_AG$, we get \[ \depth(A)\mid \ind(E*_AG). \] 
We conclude that $\ind(E*_AG)=\depth(A)$, as desired.
\end{proof}

\begin{proof}[Proof of Proposition~\ref{prop.torsion-a}]
    Let $r\geqslant 0$ be an integer. By Lemma~\ref{lem.serre_quest}, any $G$-torsor over $k_r$ is induced from a connected $A$-torsor for some finite abelian subgroup $A\subset G$ of order not divisible by $\Char(k)$.  
    Therefore, Lemma~\ref{lem.index_equals_depth}(2) tells us that
    $$\Big\{ \ind(X) : r\geqslant 0\, , X\in H^1(k_r,G)\Big\} = \Big\{ \depth(A) : A\subset G \text{ finite abelian, }\Char(k)\not\mid|A|\Big\}.$$
    This implies $t_2(G) = t_3(G)$.
\end{proof}

\section{Proofs of Theorems~\ref{thm.t(G)} and~\ref{thm.serre_quest}} 
\label{sect.t(G)}

Recall that an algebraic $k$-group $G$ is called {\em special} if $H^1(K, G) = 1$ for every field extension $K/k$. This notion was introduced by Serre in \cite{serre-special} (reprinted in \cite{serre-reprinted}), where he also showed that special $k$-groups are smooth, affine and connected. Special semisimple groups over the algebraically closed field $k$ were classified by Grothendieck; see~\cite[Theorem 3]{grothendieck-special}.
The notion of special group is of interest over an arbitrary field $k$; see~\cite{huruguen-special, reichstein-tossici-special, merkurjev-special}. Our
standing assumption in this paper is that the base field $k$ is algebraically closed.

\begin{lemma}\label{lem.special_kernel_torality}
    Let $G$ be an algebraic $k$-group, let $N\subset G$ be a normal $k$-subgroup, and let $A\subset G$ be a finite abelian $k$-subgroup such that $\Char(k)\not\mid |A|$. Assume that $N$ is special. Then $A$ is toral in $G$ if and only if the image of $A$ under the homomorphism $\pi\colon G\to G/N$ is toral.
\end{lemma}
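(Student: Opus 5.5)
The plan is to use the cohomological characterization of torality in Proposition~\ref{prop.special_case}, applied to both $G$ and $G/N$. Write $\bar A\coloneqq\pi(A)$. This is a finite abelian $k$-subgroup of $G/N$, and it is a quotient of $A$, so $\Char(k)\nmid|\bar A|$. Both implications will be checked through condition (b), which says that for every field extension $K/k$ the map $H^1(K,\cdot)\to H^1(K,\text{ambient group})$ is trivial.

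The direction ($\Rightarrow$) is direct. If $A\subset T$ for a torus $T$ of $G$, then $\pi(T)$ is a torus of $G/N$, being a quotient of a torus, and it contains $\bar A$. So $\bar A$ is toral.

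For ($\Leftarrow$), assume $\bar A$ is toral. By (a)$\Rightarrow$(b) of Proposition~\ref{prop.special_case} for $G/N$, the map $H^1(K,\bar A)\to H^1(K,G/N)$ is trivial for every $K/k$. I will then show that $H^1(K,A)\to H^1(K,G)$ is trivial, so that (b)$\Rightarrow$(a) gives torality of $A$. Fix $K$ and a class $\xi\in H^1(K,A)$, and let $\eta$ be its image in $H^1(K,G)$. By functoriality, $\pi_*\eta\in H^1(K,G/N)$ equals the image of $\pi|_{A*}\xi\in H^1(K,\bar A)$, and this is trivial. The exact sequence of pointed sets
\[H^1(K,N)\longrightarrow H^1(K,G)\longrightarrow H^1(K,G/N)\]
then shows that $\eta$ comes from $H^1(K,N)$. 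Since $N$ is special, $H^1(K,N)=1$, so $\eta$ is trivial.

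The main point needing care is the exactness of this sequence in the fppf setting. $G$ need not be smooth, but $N$ is smooth because it is special (Serre), so fppf and étale cohomology agree for $N$. The sequence $1\to N\to G\to G/N\to 1$ is exact as fppf sheaves, and the standard nonabelian cohomology sequence gives exactness at $H^1(K,G)$ in the sense that the fiber of $\pi_*$ over the trivial class is the image of $H^1(K,N)$. The rest is formal.
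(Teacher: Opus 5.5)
Your proposal is correct and is essentially the paper's proof: the forward direction uses that the image of a torus is a torus, and the converse combines the exact sequence $H^1(K,N)\to H^1(K,G)\to H^1(K,G/N)$ with $H^1(K,N)=1$ and the equivalence (a)$\Leftrightarrow$(b) of Proposition~\ref{prop.special_case}. Your extra remarks on fppf exactness and on the smoothness of $N$ are fine. They are details the paper leaves implicit.
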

\begin{proof}
   If $A$ is contained in a torus $T\subset G$, then $\pi(A)$ is contained in $\pi(T)$, which is a torus in $G/N$; see \cite[Theorem 12.9]{milne2017algebraic}.  Conversely, suppose $\pi(A)$ is toral in $G/N$. Our goal is to show that $A$ is toral in $G$. Let $K/k$ be any field extension and consider the commutative diagram of pointed sets
\begin{equation}\label{diag.special_kernel_toral}
    \begin{tikzcd}
           {\{\ast\}}=H^1(K,N)\arrow[r] & H^1(K,G)\arrow[r]& H^1(K,G/N)\\
            & H^1(K,A) \arrow[u]\arrow[r] & H^1(K,\pi(A)) \arrow[u]
    \end{tikzcd}
\end{equation}
    The map $H^1(K,\pi(A))\to H^1(K,G/N)$ is trivial because $\pi(A)$ is toral. Since the top row of \eqref{diag.special_kernel_toral} is exact, we conclude that the map $H^1(K,A)\to H^1(K,G)$ is trivial.  Since $K/k$ was arbitrary, we conclude that $A$ is toral using Proposition~\ref{prop.special_case}.
\end{proof}

The following proposition is a key ingredient in the proofs of both Theorem~\ref{thm.t(G)} and Theorem~\ref{thm.serre_quest}.

\begin{proposition} \label{prop1} Let $G$ be an algebraic $k$-group, let $m\geqslant 2$ be an integer, and let $A = A_1 \times \dots\times A_m$ be a finite abelian $k$-subgroup of $G$. Assume that $\Char(k) \! \! \not \! | \, |A|$, and $|A_1|,\dots,|A_m|$ are pairwise coprime. If $A_1,\dots,A_m$ are toral, then so is $A$. 
\end{proposition}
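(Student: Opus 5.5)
We will use the cohomological criterion of Proposition~\ref{prop.special_case}, specifically the implication (d)~$\Rightarrow$~(a). Let $r$ be the rank of $A$ and let $E$ be a connected $A$-torsor over $k_r$. It suffices to show that $E*_AG$ is split over $k_r$. Write $E=\Spec(M)$ with $M/k_r$ Galois and $\Gal(M/k_r)\cong A(k)=A_1(k)\times\dots\times A_m(k)$. For each $i$ let $M_i\subset M$ be the fixed field of $\prod_{j\neq i}A_j(k)$. Then $\Gal(M_i/k_r)\cong A_i(k)$ and $[M_i:k_r]=|A_i|$.

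\textbf{Step 1: splitting over $M_i$ for $j\neq i$.} Fix $i$. Over the field $L_i$, the fixed field of $A_i(k)$, the cocycle $\vphi\colon \Gal(k_r)\to A(k)$ restricted to $\Gal(L_i)$ takes values in $A_i(k)$. So $(E*_AG)_{L_i}$ is induced from an $A_i$-torsor over $L_i$. Since $A_i$ is toral, the implication (a)~$\Rightarrow$~(b) of Proposition~\ref{prop.special_case} shows that $H^1(L_i,A_i)\to H^1(L_i,G)$ is trivial. (Alternatively, factor through $H^1(L_i,T)=1$ by Hilbert~90.) Hence $E*_AG$ is split by $L_i$, and
\[
[L_i:k_r]=|A|/|A_i|=\prod_{j\neq i}|A_j|.
\]

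\textbf{Step 2: from coprime splitting degrees to a rational point.} Since the $|A_j|$ are pairwise coprime, $\gcd_i\bigl(\prod_{j\ne i}|A_j|\bigr)=1$. So the $G$-torsor $X=E*_AG$ over $k_r$ has index $1$. We cannot simply invoke a positive answer to Totaro's question here, because that is what we are trying to establish later. Instead we apply Theorem~\ref{thm.main}(1) to each splitting field $L_i$. This gives a maximal torus $T_i$ with $[A:A_{T_i}]$ dividing $[L_i:k_r]$, which is prime to $|A_i|$. Since $A/A_{T_i}$ is a quotient of $A=\prod_j A_j$ and its order is prime to $|A_i|$, the image of $A_i$ in $A/A_{T_i}$ is trivial. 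That is, $A_i\subset T_i$, which we already knew. So Theorem~\ref{thm.main}(1) alone does not produce a single torus, and this is the main obstacle.

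The better route is to work with the $k_r$-variety $Y=X/B$, where $B$ is a Borel subgroup of $\Glin$. By the proof of Proposition~\ref{prop.special_case}, $Y$ is proper, and $X$ is split if and only if it reduces to the solvable group $B$; over the relevant fields this amounts to $Y$ having a rational point. The argument of Proposition~\ref{prop.special_case} shows more directly that it suffices to find an $A$-fixed $k$-point of $G/B$. Concretely, $A$ acts on the proper $k$-variety $G/B$, and we want $(G/B)^A\neq\emptyset$. Because $A=A_1\times A_m'$ with $A_m'=A_2\times\dots\times A_m$, we proceed by induction on $m$ and reduce to $m=2$.

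\textbf{Step 3: the case $m=2$.} Let $A=A_1\times A_2$ with coprime orders. Since $A_1$ is toral, $(G/B)^{A_1}\neq\emptyset$; this follows from (a)~$\Rightarrow$~(d) together with the valuative argument, or directly because $A_1\subset T\subset$ some Borel subgroup. The group $A_2$ commutes with $A_1$, so it acts on $Z=(G/B)^{A_1}$. Let $C=Z_{G}(A_1)$, the centralizer. Since $\Char(k)\nmid|A_1|$, each connected component of $Z$ is a closed orbit of $C^\circ$ of the form $C^\circ/(\text{Borel of } C^{\circ}_{\rm red})$. The group $A_2$ permutes these finitely many components. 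The key claim is that $A_2$ fixes some component. To get it, apply Proposition~\ref{prop.special_case} to $A_2\subset C$. We need $A_2$ to be toral in $C$, not merely in $G$. Here we use Step 1: over $L_1$, which has degree $|A_2|$, the torsor is induced from $A_1$, and over $L_2$, which has degree $|A_1|$, it is induced from $A_2$. A transfer-style argument on the proper variety $Y$ uses points of coprime degree; for proper $Y$ this is exactly the sort of statement we cannot assume in general, so we instead use the concrete fixed-point structure. First, $A_2$ is toral in $G$, so $A_2$ fixes a point of $G/B$. Since $|A_2|$ is prime to $|A_1|$, averaging/orbit counting makes sense: the $A_1$-action on the $A_2$-fixed locus $(G/B)^{A_2}$, a smooth projective variety, has Euler characteristic (or $\ell$-adic Lefschetz number) congruent mod each prime $p\mid |A_1|$ to that of $(G/B)^{A_2}$ itself. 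The latter is positive, since it is a disjoint union of flag varieties of $Z_G(A_2)^\circ$, each with Euler characteristic $|W|>0$ up to sign conventions. Pass to the Sylow parts of $A_1$ one prime at a time. This Smith-theory/Lefschetz step, showing that a $p$-group acting on a union of flag varieties with Euler characteristic prime to $p$ has a fixed point, is where I expect the real difficulty to lie, especially in checking that the Euler characteristic is prime to $p$. A cleaner alternative is to choose the fixed components compatibly: an $A_2$-fixed point in $Z$ gives a Borel subgroup containing $A_1\times A_2$, and then $A$ lies in a torus of it, as in the end of the proof of Proposition~\ref{prop.special_case}.
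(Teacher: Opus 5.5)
\textbf{There is a genuine gap in Step~3.} Your Steps~1 and~2 are correct, and you diagnose the obstacle accurately. $E*_AG$ is split by extensions of the coprime degrees $|A|/|A_i|$. But concluding from this that it is split over $k_r$ is exactly Totaro's question (Theorem~\ref{thm.serre_quest}), whose proof depends on this proposition. Theorem~\ref{thm.main}(1) only returns $A_i\subset T_i$. Reformulating the goal as finding an $A$-fixed point on $G/B$ is also fine.

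Step~3, however, is the entire content of the proposition, and it is never carried out. Asking for $A_2$ to be toral in $C_G(A_1)$, or for an $A_2$-fixed point on $Z=(G/B)^{A_1}$ (your ``cleaner alternative''), is equivalent to the conclusion. These are restatements, not arguments.

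The Smith-theory route fails. When $A_2\subset T$, the torus $T$ acts on $(G/B)^{A_2}$, so $\chi\bigl((G/B)^{A_2}\bigr)=\chi\bigl((G/B)^{T}\bigr)=|W|$. (The components are flag varieties of $C_G(A_2)^\circ$ with Euler characteristic $|W(C_G(A_2)^\circ)|$, not $|W|$.) The congruence $\chi(Y^P)\equiv\chi(Y)\pmod p$ therefore gives a fixed point only when $p\nmid |W|$. Every torsion prime divides $|W|$ (e.g.\ $2,3,5$ for $E_8$), so the argument only works when there is nothing to prove. Coprimality is essential here: both factors of the Klein four-group $\mu_2\times\mu_2\subset\PGL_2$ are toral, but the product is not. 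Nothing in your Step~3 uses coprimality effectively.

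\textbf{The missing idea is a mechanism through which coprimality acts.} The paper first reduces to $G$ reductive. It then takes a maximal torus $T$ of the reductive group $C_G(A_1)^\circ$ that is stable under $A_2$ (Springer--Steinberg). Then $A_1\subset T$, since $A_1$ is central in $C_G(A_1)^\circ$, and $A_2\subset N=N_G(T)$.

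For $\alpha=(\alpha_1,\alpha_2)\in H^1(K,A)$, twisting by $\alpha_2$ shows that $\alpha$ and $(0,\alpha_2)$ have the same image in $H^1(K,N)$ provided a class $\gamma\in H^1(K,{}_{\alpha_2}T)$ vanishes. This $\gamma$ is split by extensions of degrees dividing $|A_1|$ and $|A_2|$. Because $H^1$ of a \emph{torus} is an abelian group, restriction--corestriction kills it. So the coprime-degree principle you could not apply to $G$ is applied to a twisted torus, where it does hold. Then $i_*(\alpha)=i_*(0,\alpha_2)$ is trivial, and Proposition~\ref{prop.special_case} finishes the proof.

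Your fixed-point strategy can also be repaired with the same torus $T$. Let $\Gamma$ be the image of $A_2$ in $\Aut(T)$. The norm map shows $t^{|\Gamma|}\in S:=(T^{\Gamma})^\circ$ for every $t\in T^{\Gamma}$, so coprimality forces $A_1\subset S$. The torus $S$ commutes with $A_2$, hence acts on the nonempty projective variety $(G/B)^{A_2}$. Borel's fixed point theorem then gives a Borel subgroup containing $A_2S\supset A$, and hence a torus containing $A$.
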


\begin{proof}We start by showing that we may assume $G$ is reductive without any loss of generality. Let $H = \Glin$ be the smooth linear part of $G$ (see Section \ref{subsect.notation}).
By definition, $H$ is the maximal smooth connected affine subgroup of $G$. Therefore 
any torus $T\subset G$ is contained in $H$. 
It follows that the toral subgroups $A_1,\dots,A_m$ are contained in $H$. The unipotent radical $R_u(H) \subset H$ is special because $k$ is algebraically closed \cite[Proposition 1]{grothendieck-special}. Therefore, by Lemma~\ref{lem.special_kernel_torality}, the subgroups $A_1,\dots,A_m$ and $A$ are toral in $H$ if and only if their images under the map $\pi\colon H\to H/R_u(H)$ are toral. Since $H/R_u(H)$ is reductive \cite[Proposition 19.11]{milne2017algebraic}, we can replace $A_1,\dots,A_m$ by $\pi(A_1),\dots,\pi(A_m)$ and $G$ by $H/R_u(H)$ to assume $G$ is reductive.

It remains to prove the proposition under the additional assumption that $G$ is reductive. By induction on $m$, we may assume that $m=2$. Thus
$A=A_1\times A_2$, where $|A_1|$ and $|A_2|$ are coprime. Let
$i\colon A\hookrightarrow G$ be the inclusion homomorphism. In view of
Proposition~\ref{prop.special_case}, it suffices to show that, for every
field extension $K/k$, the induced map
$i_*\colon H^1(K,A)\to H^1(K,G)$ is trivial.

Let $\alpha=(\alpha_1,\alpha_2)\in H^1(K,A_1)\times H^1(K,A_2)=H^1(K,A)$.
We will show that $i_*(\alpha)$ is trivial. Since $A_1$ is toral, choose a maximal torus $T_1$ of $G$ containing
$A_1$. The connected centralizer $C_G(A_1)^0$ is reductive (see, e.g., \cite[Corollary 2.18(a)]{steinberg-torsion}) and $T_1$ is a
maximal torus of $C_G(A_1)^0$. Moreover, $A_2$ acts on $C_G(A_1)^0$ by
conjugation, because $A_2$ centralizes $A_1$. By
\cite[Theorem 5.16]{springer-steinberg}, there is a maximal torus $T$ of
$C_G(A_1)^0$ which is preserved by this action.\footnote{In \cite{springer-steinberg}, $G$ is assumed to be semisimple. $C_G(A_1)^0$ is reductive but may not be semisimple in general. We can reduce to the semisimple case by modding out by the center. For an alternative argument using $w$-restricted Lie algebras, see \cite[Proposition 5.10]{chernousov2025loop}.} Since $A_1$ is central in
$C_G(A_1)^0$, it is contained in $T$. Thus $A_1\subset T$ and
$A_2\subset N_G(T)$. Set $N=N_G(T)$, and let $j\colon A\hookrightarrow N$
be the resulting inclusion.

It suffices to show
\begin{equation}\label{eq:claim-j}
j_*(\alpha_1,\alpha_2)=j_*(0,\alpha_2)
\quad\text{in } H^1(K,N).
\end{equation}
Indeed, \eqref{eq:claim-j} implies 
$i_*(\alpha_1,\alpha_2)=i_*(0,\alpha_2)$ in $H^1(K,G)$. But $(0,\alpha_2)$ comes from
$H^1(K,A_2)$, and $A_2$ is toral. Hence Proposition~\ref{prop.special_case}
implies that $i_*(0,\alpha_2)$ is trivial. Thus $i_*(\alpha)$ is trivial, as desired.

It remains to prove \eqref{eq:claim-j}. Set $\beta=(0,\alpha_2)$. Twisting by
a cocycle representing $\beta$ shows that \eqref{eq:claim-j} is equivalent to saying that $(\alpha_1,0)$ maps to the neutral element of $H^1(K,{}_{\alpha_2}N)$ under
\[
({}_{\beta}j)_*\colon
H^1(K,A_1\times {}_{\alpha_2}A_2)\longrightarrow H^1(K,{}_{\alpha_2}N).
\]
Since $(\alpha_1,0)$ comes from the subgroup $A_1$, and since
$A_1\subset T$, its image in $H^1(K,{}_{\alpha_2}N)$ comes from a class
$\gamma\in H^1(K,{}_{\alpha_2}T)$. Here the inclusion
$A_1\hookrightarrow {}_{\alpha_2}T$ is obtained by twisting the inclusion
$A_1\hookrightarrow T$ by $\alpha_2$.

We now show that $\gamma=0$. Since $\alpha_1\in H^1(K,A_1)$, there is a finite extension $L_1/K$ of degree dividing $|A_1|$
which splits $\alpha_1$, and hence splits $\gamma$. On the other
hand, there is a finite extension $L_2/K$ of degree dividing $|A_2|$ which
splits $\alpha_2$. Over $L_2$, the torus ${}_{\alpha_2}T$ becomes split,
and so by Hilbert's Theorem 90 the extension $L_2$ splits $\gamma$. Thus $\gamma$ is split by two finite extensions of $K$ of relatively prime degrees. By \cite[Lemma 3.1]{gordon-sarney}, Serre's question for tori \cite[Question 2]{serre-pp} (i.e., Question~\ref{Totaro_question} with $X$ a $G$-torsor and $d = 1$) has a positive answer. It follows that $\gamma=0$. This proves
\eqref{eq:claim-j}, and hence Proposition~\ref{prop1}.
\end{proof}

Recall that the depth of a finite abelian $k$-subgroup $A\subset G$ is the greatest common divisor of the indices $[A:A_T]$ where $A_T = A\cap T$ ranges over all maximal tori $T\subset G$.

\begin{corollary}\label{cor.of_prop1}
    Let $G$ be an algebraic $k$-group, and let $A \subset G$ be a finite abelian $k$-subgroup such that $\Char(k)\not\mid |A|$. Then there exists a maximal torus $T\subset G$ such that $[A:A_T] = \depth(A)$.
\end{corollary}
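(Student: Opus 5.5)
Decompose $A$ into its Sylow subgroups, $A = A_{p_1}\times\dots\times A_{p_m}$, where $p_1,\dots,p_m$ are the distinct primes dividing $|A|$. For each $i$, consider the indices $[A:A_T]$ as $T$ ranges over maximal tori. Since $A_T=\prod_i (A_{p_i})_T$, where $(A_{p_i})_T = A_{p_i}\cap T$ is the $p_i$-Sylow subgroup of $A_T$, we have $[A:A_T]=\prod_i [A_{p_i}:(A_{p_i})_T]$. The $p_i$-part of $[A:A_T]$ is therefore $[A_{p_i}:(A_{p_i})_T]$, a power of $p_i$. As a result, the $p_i$-part of $\depth(A)$ is the minimum over $T$ of $[A_{p_i}:(A_{p_i})_T]$, because the gcd of a family of integers has $p$-adic valuation equal to the minimal valuation. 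For each $i$, choose a maximal torus $T_i$ realizing this minimum, and set $B_i\coloneqq A_{p_i}\cap T_i$. Each $B_i$ is toral, and $[A_{p_i}:B_i]$ equals the $p_i$-part of $\depth(A)$.

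Now put $B\coloneqq B_1\times\dots\times B_m\subset A$. This is a finite abelian $k$-subgroup of order prime to $\Char(k)$. The orders $|B_i|$ are powers of distinct primes, hence pairwise coprime, and each $B_i$ is toral. If $m\geqslant 2$, Proposition~\ref{prop1} applies and shows that $B$ is toral. If $m=1$, this is trivial, and if $A$ is trivial there is nothing to prove. So there is a maximal torus $T$ of $G$ containing $B$. Here we use that any torus lies in a maximal torus, which holds because tori lie in $\Glin$.

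For this $T$, we have $B\subset A_T$, so $[A:A_T]$ divides $[A:B]=\prod_i[A_{p_i}:B_i]=\depth(A)$. Conversely, $\depth(A)$ divides $[A:A_T]$ by definition of depth. Hence $[A:A_T]=\depth(A)$. The only substantive input is Proposition~\ref{prop1}, which glues the Sylow-wise optimal toral subgroups. The remaining step needing care is bookkeeping: $(A_T)_{p_i}=A_{p_i}\cap T$, which holds since $A_T$ is a subgroup of the abelian group $A$, so its Sylow subgroups are its intersections with those of $A$.
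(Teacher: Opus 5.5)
Your proposal is correct and follows essentially the same route as the paper's proof. Both arguments split $A$ into Sylow subgroups, choose for each prime $p_i$ a maximal torus $T_i$ that minimizes the $p_i$-part of $[A:A_{T_i}]$, glue the toral pieces $A_{p_i}\cap T_i$ into a single toral subgroup using Proposition~\ref{prop1}, and finish with the two divisibilities $[A:A_T]\mid\depth(A)$ and $\depth(A)\mid[A:A_T]$.
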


\begin{proof}
    Let $p_1,\dots,p_m$ be the prime divisors of $|A|$. Write $A$ as a direct product $A = A_1 \times\dots\times A_m$, where $A_i \subset A$ is a $p_i$-Sylow subgroup for each $1\leq i\leq m$. There exist integers $s_1,\dots,s_m\geqslant 0$ such that 
$$\depth(A)=  p_1^{s_1} p_2^{s_2} \ldots p_m^{s_m}.$$ 
By the definition of $\depth(A)$, for any $1\leq i\leq m$ there exists a torus $T_i\subset G$ such that $p_i^{s_i+1}$ does not divide $[A:A_{T_i}]$. The intersection $A'_i  = A_{T_i} \cap A_i$ is a toral subgroup of $A_i$ such that $[A_i:A'_i] \mid p_i^{s_i}$. The subgroup $A' = A_1'\times\dots\times A'_m$ has index $[A:A']$ dividing $\depth(A)$ by construction. By Proposition~\ref{prop1}, there exists a torus $T \subset G$ such that $A' \subset A_T$. Therefore $$[A:A_T] \mid [A:A'] \mid \depth(A).$$
Since $\depth(A)$ divides $[A:A_T]$ by definition, we conclude $\depth(A) = [A:A_T]$.
\end{proof}

\begin{proof}[Proof of Theorem~\ref{thm.t(G)}]
Let $r\geqslant 0$ be the rank of $A$. By \eqref{eq:gal-kr-presentation}, there exists a surjection $\vphi\colon \mathrm{Gal}(k_r)\to A(k)$. The torsor $[\vphi]=E \in H^1(k_r,A)$ is connected by Lemma~\ref{lem.tame_hom_is_loop}. 
By Lemma~\ref{lem.index_equals_depth}(2), $\depth(A) = \ind(E *_A G)$, where
$E*_A G$  is the $G$-torsor over $k_r$ from the statement of Theorem~\ref{thm.main}.

Combining this with Corollary~\ref{cor.of_prop1},
we conclude that there exists a maximal torus $T \subset G$ such that $[A: A_T] = \ind(E *_A G)$. 
Since $\ind(E *_A G)$ divides $t(G)$ (by the definition of $t(G)$), we conclude that
$[A: A_T]$ divides $t(G)$, as desired.
\end{proof}

\begin{proof}[Proof of Theorem~\ref{thm.serre_quest}]
    Applying Lemma~\ref{lem.serre_quest} to $G$ gives an abelian $k$-subgroup $A \subset G$ and a connected $A$-torsor $E$ such that $X \cong E\ast_A G$.
By Corollary~\ref{cor.of_prop1}, there exists a torus $T\subset G$ such that $[A:A_T] = \depth(A)$. By Lemma~\ref{lem.index_equals_depth}(1) the $G$-torsor $X$ admits a splitting field $L/k_r$ of degree $[L:k_r]=[A:A_T] = \depth(A)$.  Finally, Lemma~\ref{lem.index_equals_depth}(2) implies $[L:k_r]=\depth(A)=\ind(X)$. Since $\ind(X)$ divides $d$, we conclude that $[L:k_r]$ divides $d$, as desired.
\end{proof}

For our final corollary, recall the definition of special group from the beginning of this section.

\begin{corollary} \label{cor.special} Let $G$ be a reductive $k$-group. Assume that $\Char(k)$ is not a torsion prime of $G$. 
Then the following conditions are equivalent:

\smallskip
(a) $G$ is special,

\smallskip
(b) $t(G) = 1$,

\smallskip
(c) $t_2(G) = 1$,

\smallskip
(d) $H^1(k_r, G) = 1$ for every $r \geqslant 0$,

\smallskip
(e) $H^1(k_r, G) = 1$ for every $0\leqslant r \leqslant 3$,

\smallskip
(f) every finite elementary abelian $p$-subgroup of $G$ of rank $\leqslant 3$ is toral for every prime $p \neq \Char(k)$,

\smallskip
(g) every finite abelian subgroup $A$ of $G$ whose order is invertible in $k$ is toral,

\smallskip
(h) $t_3(G) = 1$.
\end{corollary}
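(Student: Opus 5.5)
We will establish the equivalences by running around the cycle (a)$\Rightarrow$(b)$\Rightarrow$(c)$\Rightarrow$(d)$\Rightarrow$(e)$\Rightarrow$(f)$\Rightarrow$(g)$\Rightarrow$(h)$\Rightarrow$(a), using that every implication except the last two ultimately reduces to results already proved.

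\emph{The easy implications.}
(a)$\Rightarrow$(b): if $G$ is special, every $G$-torsor over every $K/k$ is split, so every index in \eqref{eq:t} equals $1$.
(b)$\Rightarrow$(c): this follows from $t_2(G)\mid t(G)$ in \eqref{e.divisibilities}.
(c)$\Rightarrow$(d): let $X$ be a $G$-torsor over $k_r$. Then $\ind(X)=1$, so $X$ has a zero-cycle of degree $1$. Theorem~\ref{thm.serre_quest} then gives a closed point of degree $1$, i.e.\ $X$ is split. This step needs $\Char(k)$ to be good for $G$.
(d)$\Rightarrow$(e): this is trivial.

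\emph{(e)$\Rightarrow$(f).} Let $A\subset G$ be elementary abelian of rank $r\leqslant 3$ with $p\neq \Char(k)$. By the remark after Proposition~\ref{prop.special_case}, there is a connected $A$-torsor $E$ over $k_r$. By (e), $E*_AG$ is split. Proposition~\ref{prop.special_case}, (d)$\Rightarrow$(a), then shows that $A$ is toral.

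\emph{(f)$\Rightarrow$(g).} We argue by contrapositive. If some such $A$ is non-toral, Corollary~\ref{cor.of_prop1} shows that $\depth(A)>1$. Hence some prime $p$ divides $t_3(G)$, so $p\mid t(G)$ by \eqref{e.divisibilities}, i.e.\ $p$ is a torsion prime. By hypothesis $p\neq\Char(k)$. By \cite[Theorem 2.28]{steinberg-torsion}, as used in Lemma~\ref{lem.same-prime-factors}, $G$ then has a non-toral elementary abelian $p$-subgroup, which can be taken of rank $\leqslant 3$ (the parenthetical remark there). This contradicts (f).

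(g)$\Rightarrow$(h): every depth is $1$, so $t_3(G)=1$.

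\emph{(h)$\Rightarrow$(a).} By Lemma~\ref{lem.same-prime-factors}, which applies because $\Char(k)$ is not a torsion prime, $t(G)$ has the same prime factors as $t_3(G)=1$. Hence $t(G)=1$. Grothendieck's theorem \cite{grothendieck-special} says that a reductive group with $t(G)=1$ is special: its torsors become trivial after extensions of coprime degrees and hence are generically split. I will invoke it in this form. This closes the cycle.

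\emph{Main obstacle.} The delicate point is the characteristic hypothesis in (c)$\Rightarrow$(d). Theorem~\ref{thm.serre_quest} needs $\Char(k)$ good, i.e.\ prime to $|W(G)|$, and this is stronger than $\Char(k)$ not being torsion. I would try to avoid this in one of two ways:
\begin{enumerate}
\item Route (c)$\Rightarrow$(h) directly through Lemma~\ref{lem.index_equals_depth}(2). Any $A$ with $\Char(k)\nmid|A|$ gives a connected torsor $E$ over $k_r$ with $\depth(A)=\ind(E*_AG)\mid t_2(G)=1$. This yields (c)$\Rightarrow$(h) without goodness.
\item Obtain (d) from (a) instead, since special means all $H^1$ vanish.
\end{enumerate}
The ordering would then be (a)$\Rightarrow$(d)$\Rightarrow$(e)$\Rightarrow$(f)$\Rightarrow$(g)$\Rightarrow$(h)$\Rightarrow$(a), together with (a)$\Rightarrow$(b)$\Rightarrow$(c)$\Rightarrow$(h). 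The other point to check is that the rank-$\leqslant 3$ refinement of Steinberg's theorem holds when $\Char(k)$ is non-torsion.
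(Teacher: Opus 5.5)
Your final arrangement, (a)$\Rightarrow$(d)$\Rightarrow$(e)$\Rightarrow$(f)$\Rightarrow$(g)$\Rightarrow$(h)$\Rightarrow$(a) together with (a)$\Rightarrow$(b)$\Rightarrow$(c)$\Rightarrow$(h), is correct (you were right to drop the first cycle's (c)$\Rightarrow$(d), since Theorem~\ref{thm.serre_quest} needs good characteristic), and it uses exactly the paper's ingredients, merely regrouped: Proposition~\ref{prop.special_case}, $t_3\mid t_2\mid t$, Corollary~\ref{cor.of_prop1}, Steinberg's rank-$\leqslant 3$ theorem and Grothendieck's theorem; the paper proves (f)$\Rightarrow$(b) directly from Steinberg and (h)$\Rightarrow$(g) from Corollary~\ref{cor.of_prop1}, so your (f)$\Rightarrow$(g) and (h)$\Rightarrow$(a) amount to composites of its arrows. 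Rest (h)$\Rightarrow$(a) on the citation of Grothendieck's theorem alone, not on your gloss, because passing from ``split by extensions of coprime degrees'' to ``split'' is exactly Serre's question (Question~\ref{Totaro_question} with $d=1$), which is not available as a general tool.
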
 

The implication (e) $\Longrightarrow$ (a) refines~\cite[Theorem 1(c)]{reichstein-youssin-special}.

\begin{proof} The overall scheme of the proof is indicated below.
\[
  \xymatrix{
(a) \ar@{=>}[d] \ar@{<=}[r] & (b) \ar@{=>}[r] & (c)   \ar@{=>}[d] \\ 
  (d) \ar@{=>}[d]  &   &  (h) \ar@{=>}[d]   
  \\  (e) \ar@{=>}[r]  & (f) \ar@{=>}[uu]  &   (g) \ar@{=>}[l]    }
\]

\smallskip
The implications (a) $\Longrightarrow$ (d), (b) $\Longrightarrow$ (c),  (d) $\Longrightarrow$ (e), and (g) $\Longrightarrow$ (f) are immediate from the definitions.

\smallskip
(b) $\Longrightarrow$ (a) is proved 
in \cite[Theorem 3]{grothendieck-special}; see also \cite[Corollary 1 to Theorem 2]{grothendieck-special}.

\smallskip
(c) $\Longrightarrow$ (h). As we noted in the Introduction, $t_3(G)$ divides $t_2(G)$ by Theorem~\ref{thm.main}. Thus $t_2(G) = 1$ forces $t_3(G) = 1$.

\smallskip
(d) $\Longrightarrow$ (g) and (e) $\Longrightarrow$ (f) follow from Proposition~\ref{prop.special_case}.

\smallskip
(f) $\Longrightarrow$ (b). Suppose that $p$ is a torsion prime of $G$.
By our assumption, $p \neq \Char(k)$. By \cite[Theorem 2.28]{steinberg-torsion}, $G$ contains a non-toral finite elementary abelian $p$-subgroup $A$ of rank $\leqslant 3$. This contradicts (f). In other words, if (f) holds, then $G$ has no torsion primes. Since the torsion primes of $G$ are precisely the prime divisors of $t(G)$, we conclude that $t(G) = 1$.

\smallskip
(h) $\Longrightarrow$ (g) follows from Corollary~\ref{cor.of_prop1}.
\end{proof}

\section{Proof of Theorem~\ref{thm.torsion-b}}
\label{sect.E8}

Our proof is based on the following lemma.

\begin{lemma}\label{lem.reduction_levis_and_maximals}
Let $G$ be a semisimple $k$-group. For any proper parabolic subgroup $P\subset G$, let $L_P \subset P$ denote a Levi subgroup. The index  $t_3(G)$ is the least common multiple of the depths of all maximal finite abelian subgroups $A\subset G$ and the indices $t_3(L_P)$ as P varies over all the maximal proper parabolic subgroups of $G$.
\end{lemma}

\begin{proof}  
 Let $A\subset G$ be a finite abelian $k$-subgroup of order prime to $\Char(k)$. Then either

    \smallskip
    (i) the centralizer $C_G(A)$ is finite or

    \smallskip
    (ii) $A \subset L_P$, where $P$ is a proper parabolic subgroup of $G$.

\smallskip
\noindent
This follows from~\cite[Lemma 7.6]{reichstein-youssin}; we give
a short proof for the sake of completeness. Assume that (i) fails. 
Then the connected centralizer
$C^0_G(A)$ is reductive and positive-dimensional; see~\cite[Corollary 2.18(a)]{steinberg-torsion}.
Hence, it contains a non-trivial torus $S$.  By \cite[Proposition 20.4]{borel_lag}, $C_G(S) = L_P$ is a Levi subgroup of some parabolic $P \subset G$. By our construction, $A \subset L_P$. Note that $P \neq G$, otherwise $S$ would be central in $G$, contradicting the fact that the center of $G$ is finite. This implies (ii).

Continuing with the proof of Lemma~\ref{lem.reduction_levis_and_maximals},
in case (i), $A$ is contained in a maximal finite abelian subgroup
$A'$. Clearly $\depth(A) \mid \depth(A')$. 

In case (ii), $A \subset L_P$ for some proper parabolic subgroup $P \subsetneq G$. After replacing $P$ by a maximal parabolic $P_{\rm max}$ containing $P$, we may assume that $P$ is maximal. This is because, by \cite[Lemma 2.6]{bate2005geometric}, if $A \subset P_{\max}$, then $A$ lies in some Levi subgroup of $P_{\rm max}$. 
In order to finish the proof of the lemma, it suffices to show that  
\begin{equation} \label{e.parabolic}
\depth(A) = \depth(i(A)), \end{equation}
where $i\colon L_P\hookrightarrow G$ is the natural inclusion map. Here the left hand side denotes the depth of $A$ in $L_P$ and the right hand side denotes the depth of $A$ in $G$. 
If we can establish~\eqref{e.parabolic}, then we will know that the least common multiple of $\depth(i(A))$ over all finite abelian subgroups $A \subset L_P$ such that $|A|$ is prime to $\Char(k)$ is $t_3(L_P)$, and the Lemma will follow.

To prove \eqref{e.parabolic}, let $L/k_r$ be an $A(k)$-Galois field extension, for some $r\geqslant \rank(A)$. Let $E \to \Spec(k_r)$ be the connected $A$-torsor defined by $L/k_r$ and let $[E_{L_P}]\in H^1(k_r,L_{P})$ be the image of $[E]$ under $H^1(k_r,A)\to H^1(k_r,L_P)$. By \cite[Exposé XXVI, Corollaire 5.10(i)]{SGA3}, the pushforward map $i_*\colon H^1(K,L_P) \to H^1(K,G)$ is injective for every field extension $K/k$. 
Therefore $\ind(E_{L_P}) = \ind(i_*(E_{L_P}))$. Applying Lemma~\ref{lem.index_equals_depth}(2) twice yields
    $$\depth(A) = \ind(E_{L_P}) = \ind(i_*(E_{L_P}))= \depth(i(A)),$$
as desired.    
\end{proof}

We now recall the classification of maximal finite abelian subgroups of $E_8$ in characteristic zero due to Draper and Elduque \cite{draper2017maximal}. They showed that, up to conjugacy, there are exactly seven such subgroups: four elementary subgroups, isomorphic to $\mu_2^9,\mu_2^8,\mu_3^5, \mu_5^3$, and three non-elementary subgroups, isomorphic to $\mu_6^3,\mu_4^3\times \mu_2^2, \mu_4\times \mu_2^6$. In particular, up to conjugacy, a maximal finite abelian subgroup $A$ of $E_8$ is uniquely determined by its isomorphism type.

\begin{proposition} \label{prop.torsion-d} Assume $\Char(k)=0$. Let $A$ be a maximal finite abelian subgroup of a split group of type $E_8$. 

\smallskip
(a) If $A \simeq \mu_2^9$, then $\depth(A) = 2$,

\smallskip
(b) If $A \simeq \mu_2^8$, then $\depth(A) = 4$,

\smallskip
(c) If $A \simeq \mu_6^3$, then $\depth(A) \, | \, 6$.

\smallskip
(d) If $A \simeq \mu_5^3$, then $\depth(A) = 5$,

\smallskip
(e) If $A \simeq \mu_4\times \mu_2^6$, then $\depth(A) \, | \, 4$.

\smallskip
(f) If $A \simeq \mu_3^5$, then $\depth(A) = 3$.

\smallskip
(g) If $A \simeq \mu_4^3 \times \mu_2^2$, then $\depth(A) \, | \, 4$.
\end{proposition}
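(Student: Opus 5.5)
Each of (a)--(g) amounts to an upper bound and, in (a), (b), (d), (f), a matching lower bound. By Corollary~\ref{cor.of_prop1}, $\depth(A)=\min_T[A:A_T]$ is attained by a single maximal torus. So for an upper bound it suffices to exhibit one maximal torus $T$ with $[A:A_T]$ equal to (or dividing) the claimed number, i.e.\ a toral subgroup $A'\subset A$ of the right index. For a lower bound it suffices to show that no subgroup of $A$ of smaller index is toral. Since $A$ is a $p$-group in cases (a), (b), (d), (f), it is enough there to show that every subgroup of index $p^{s-1}$ is non-toral, where $p^s$ is the claimed depth.

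For the explicit constructions I will use the models of Draper and Elduque.

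\begin{itemize}
\item $\mu_5^3$ and $\mu_3^5$ arise from the subgroups $\SL_5\times\SL_5/\mu_5$ and $\SL_3^4/\ldots$ (equivalently $(E_6\times\SL_3)/\mu_3$) of $E_8$.
\item The $2$-groups arise from $\HSpin_{16}=\Spin_{16}/\mu_2$, where finite abelian $2$-subgroups are described through lifts of diagonal sign matrices.
\item $\mu_6^3=\mu_2^3\times\mu_3^3$ arises via $(G_2\times F_4)$, with the $\mu_2^3\subset G_2$ and $\mu_3^3\subset F_4$ non-toral.
\end{itemize}

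In each model I will identify a concrete toral subgroup. For $\mu_5^3$, one factor $\mu_5$ coming from the center of $\SL_5$ is central in a Levi-type subgroup and hence toral, and $\mu_5^2$ sits in a torus of $(\SL_5\times\SL_5)/\mu_5$. Similarly, for $\mu_3^5$ I will find a toral $\mu_3^4$. In $\HSpin_{16}$, a subgroup of $\mu_2^9$ of index $2$ (resp.\ of $\mu_2^8$ of index $4$) is the image of a commuting family of diagonal elements lifting to a torus of $\Spin_{16}$. For (c), (e), (g), the same procedure yields a toral subgroup of index $6$, $4$, $4$ respectively: in (c) it is $\mu_2^2\times\mu_3^2$, using that rank-$2$ subgroups of $G_2$ and $F_4$ of the relevant type are toral. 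These give the divisibility statements.

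For the lower bounds I will use the criterion that a finite abelian $p$-subgroup is toral if and only if it lies in a torus, which forces its centralizer to have rank $8$. Equivalently, by Proposition~\ref{prop.special_case}, the induced torsor over $k_r$ would be split. The classical facts I will invoke are these.

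\begin{itemize}
\item Every elementary abelian $5$-subgroup of $E_8$ of rank $3$ whose centralizer is finite is non-toral. So $\mu_5^3$ itself is non-toral, giving $5\mid\depth$ in (d).
\item For (f), $\mu_3^5$ is non-toral; the rank of a toral elementary $3$-subgroup could be as large as $8$, so this alone settles (f).
\item For (a), $\mu_2^9$ has rank $9>8$, so it is non-toral.
\item For (b), I must show that every index-$2$ subgroup $\mu_2^7\subset\mu_2^8$ is non-toral. I plan to do this via Griess's classification of elementary abelian $2$-subgroups of $E_8$, using the count of conjugacy classes of elements of type $2A$ and $2B$. In a toral $2$-subgroup the distribution of involution types is constrained by the Weyl group action on $T[2]$, and I will compare that with the distribution in the hyperplanes of the $\mu_2^8$ built from $\HSpin_{16}$.
\end{itemize}

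The main obstacle is this last step, the lower bound $\depth=4$ in (b), together with verifying the explicit toral subgroups in the $2$-group cases. If the involution-count argument is unwieldy, I would instead compute the centralizer of a hyperplane in $\mu_2^8$ and show that it is finite or has rank $<8$, which rules out torality.
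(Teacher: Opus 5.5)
\textbf{Verdict: there is a genuine gap.} The proposal correctly sets up the problem but only announces the parts that carry the content, and its proposed method for the $2$-groups rests on a false structural claim.

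\textbf{What works.} Your framework is fine. By Corollary~\ref{cor.of_prop1} the depth is the minimal index $[A:A_T]$. A maximal finite abelian subgroup is never toral, since a torus containing $A$ would contain strictly larger finite abelian subgroups. This gives the lower bounds in (a), (d), (f). Your treatment of (c) and (d) also works. The paper needs no models there: by Steinberg, every abelian subgroup of $E_8$ generated by two elements is toral.

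\textbf{Main gap: no way to certify torality in rank $\geqslant 3$.} The substantive parts are only announced (``I will find a toral $\mu_3^4$'', ``the same procedure''). These are a toral subgroup of index $3$ in $\mu_3^5$, one of index $4$ in $\mu_4^3\times\mu_2^2$ and in $\mu_4\times\mu_2^6$, and the lower bound in (b). You give no tool for showing that a \emph{specific} subgroup of rank $\geqslant 3$ is toral. Rank arguments like Steinberg's stop at rank $2$, since $E_8$ has non-toral abelian subgroups of rank $3$ (e.g.\ $\mu_5^3$).

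The missing idea is the paper's Lie-algebra criterion. Suppose $C_G(A)$ is finite, $A=\langle A_0,\theta\rangle$, and $\dim\mathfrak g^{A_0}\geqslant 8=\rank G$. Then $A_0$ is toral:
\begin{itemize}
\item $\mathfrak g^{A_0}$ is reductive and $\theta$-stable.
\item If its derived algebra $\mathfrak s$ were nonzero, Borel--Mostow would give $0\neq\mathfrak s^\theta\subset\mathfrak g^A=0$, a contradiction.
\item Hence $C_G(A_0)^\circ$ is a torus of dimension $\geqslant 8$, i.e.\ a maximal torus, and it contains $A_0$.
\end{itemize}
What remains is to show $\dim\mathfrak g^{A_0}=8$ for a suitable $A_0$. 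In (f), $\mathfrak g$ splits under $\mu_3^5$ into $240$ one-dimensional character spaces and two four-dimensional ones $\mathfrak g_{\pm\chi_0}$, so $A_0=\ker\chi_0$ has $\mathfrak g^{A_0}=\mathfrak g_{\chi_0}\oplus\mathfrak g_{-\chi_0}$. In (g) it is an explicit trace computation in the Draper--Elduque model coming from $(\SL_2\times\SL_8)/\mu_4$.

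\textbf{The claim about $\HSpin_{16}$ is false.} Abelian $2$-subgroups of $\HSpin_{16}$ are not in general generated by lifts of diagonal sign matrices. Such lifts lie in $\{\pm e_v: v\in C\}$. Commutativity modulo $e_{\mathbf 1}$ forces $C\subset(\bF_2^{16})_0$ to be self-orthogonal, because $-1\notin\langle e_{\mathbf 1}\rangle$. All their squares lie in $\{\pm 1\}$. Consequences:
\begin{itemize}
\item The image in $E_8$ has order at most $2^8$ and at most one cyclic factor of order $4$. So neither $\mu_2^9$ nor $\mu_4^3\times\mu_2^2$ arises this way, and ``the same procedure'' is unavailable in (g).
\item Such a group is toral exactly when $C$ is constant on the blocks of a pairing of coordinates (the condition of Lemma~\ref{lem.depth_in_Spin}(3), which is also necessary). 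The elementary abelian ones then have order at most $2^7$. So the toral index-$2$ subgroup of $\mu_2^9$, of order $2^8$, cannot be produced as you describe. In (a), simply take $A=\langle T[2],w\rangle$ with $w\in N_G(T)$ acting by $-1$; then $T[2]$ is visibly toral of index $2$.
\end{itemize}

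\textbf{Status of (e) and (b).} In (e) your idea is the paper's: the self-dual code $F_{16}$ with no weight-$2$ words, its pair-constant subcode $C_{2,8}$, and maximality via $\mathfrak{so}_{16}^{A_C}=0$ plus the Draper--Elduque list. None of it is carried out. In (b) the lower bound is the whole difficulty; the paper cites \cite[Proposition 5.3]{reichstein-youssin2} plus uniqueness up to conjugacy.

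Your involution count can work, but only with the right model. Take the $\mu_2^8$ built from the direct sum of two extended Hamming codes of length $8$. Its $56$ involutions of type $2A$ (the images of $\pm e_v$, $\wt(v)=4$) generate the group, so every hyperplane contains fewer than $56$ of them. A toral $\mu_2^7$, being a hyperplane of some $T[2]\cong Q/2Q$ with $Q$ the root lattice, contains exactly $56$ or $64$. For the analogous group built from $d_{16}^{+}$, the hyperplane coming from $d_{16}$ is toral, so that group has depth $2$.
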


We conclude this section with a proof of Theorem~\ref{thm.torsion-b} based on Proposition~\ref{prop.torsion-d} and Lemma~\ref{lem.reduction_levis_and_maximals}.
In order to preserve the flow of the exposition, 
we defer the proof of Proposition~\ref{prop.torsion-d} to the next section. 

\begin{proof}[Proof of Theorem~\ref{thm.torsion-b}]
Let $G$ be an exceptional group of type $E_8$ over an algebraically closed field of characteristic $0$. Since $t_3(G) = t_2(G)$ by Proposition~\ref{prop.torsion-a}, it suffices to prove
\begin{equation}\label{e.torsion_goal_1}
    t_3(G) = 60.
\end{equation}
The least common multiple of $\depth(A)$ as $A$ varies over all maximal finite abelian subgroups of $G$ is $60$ by Proposition~\ref{prop.torsion-d}. By Lemma~\ref{lem.reduction_levis_and_maximals}, \eqref{e.torsion_goal_1} will follow once we show
\begin{equation}\label{e.torsion_goal_2}
    t_3(L_P) \mid 60,
\end{equation}
for any Levi subgroup $L_P \subset P$ of a maximal parabolic $P\subset G$. 
Since $t_3(L_P)$ divides the Grothendieck torsion index $t(L_P)$, it suffices to show that $t(L_P)$ divides $12$.

Let $L'_P \subset L_P$ be the derived subgroup of $L_P$. The cokernel $L_P/L'_P$ is a torus. By Hilbert 90, $H^1(K,L_P/L_P')=0$ for every field $K$ containing $k$. Thus the pushforward map $i_*\colon H^1(K,L'_P)\to H^1(K,L_P)$ is surjective. Here $i \colon L_P' \hookrightarrow L_P$ is the inclusion map. Since $\ind(i_*(X)) \mid \ind(X)$ for any $[X]\in H^1(K,L'_P)$, we conclude that
$t(L_P)\mid t(L'_P)$. Therefore, in order to establish
\eqref{e.torsion_goal_2}, it suffices to show that
\begin{equation}\label{e.E8_tor_2^5_not_divide_L'_P}
    t(L'_P)\mid 12.
\end{equation}

 Since $P$ is a maximal proper parabolic of $G$, the Dynkin diagram of $L'_P$ is obtained from the Dynkin diagram of $E_8$ by removing one vertex \cite{tits1965classification}. Moreover, $L'_P$ is simply connected by \cite[Corollary 5.4(b)]{springer-steinberg}. Therefore $L'_P$ is a product of simple simply connected groups.
    Removing each of the eight vertices in the Dynkin diagram of $E_8$ gives the following possibilities: 
 \begin{gather*} \text{(i) $D_7$, \, (ii) $A_1\times A_6$, \, (iii) $A_2\times A_1\times A_4$, \, (iv) $A_7$, \, (v) $A_4\times A_3$,} \\
  \text{(vi) $D_5\times A_2$, \,
   (vii) $E_6\times A_1$, \, (viii) $E_7$.}
   \end{gather*}
   
\noindent
   That is, in case (i), $L'_P$ is isomorphic to $\Spin_{14}$, in case (ii) to $\SL_2 \times \SL_7$, etc.
    We will now check that~\eqref{e.E8_tor_2^5_not_divide_L'_P} holds in each of these eight cases.
    By Lemma~\ref{lem.t(G)}, if $H$ is the direct product $H = H_1\times \dots\times H_r$, we have 
    \begin{equation}\label{e.torsion_of_prod}
    \text{$t(H)$ divides $\prod_{i=1,\dots,r} t(H_i)$.}\footnote{Using Grothendieck's definition of the torsion index (see Remark~\ref{rem.chow}), one can show that, in fact, equality holds in \eqref{e.torsion_of_prod}. We will not use this.}
    \end{equation}
    The torsion index of special linear groups is $1$. In cases (ii) - (v), $L'_P$ is a direct product of special linear groups and thus $t(L'_P) =  1$. The simply connected group of type $D_n$ is $\Spin_{2n}$. We have $t(\Spin_{14}) = 2^2$ and $t(\Spin_{10}) = 2$ \cite{totaro-spin}. Therefore $t(L'_P)\mid 4$ in case (i) and $t(L'_P) \mid 2 \cdot 1 = 2$ in case (vi). In cases (vii) and (viii), $t(L'_P)$ divides 
    \[ t(E_6^{\rm sc}) \cdot t(\SL_2) = 6 \cdot 1 = 6 \]
and    
    \[ t(E_7^{\rm sc}) = 12, \]
respectively.
(See~\cite{totaro-E8} for torsion indices of $E_6^{\rm sc}$ and $E_7^{{\rm sc}}$.) We conclude that~\eqref{e.E8_tor_2^5_not_divide_L'_P} holds in each of the cases (i) - (viii), as we wanted to show.
\end{proof}

\begin{remark} Let $G$ be an affine group over an algebraically closed field $k$. Recall that $t(G)$ is defined as the least common multiple of $\ind(X)$, as $X$ ranges over all $G$-torsors $X \to \Spec(K)$, where $K$ is a field containing $k$. By Proposition~\ref{prop:tG-well-defined}, there exists a field $K/k$ and a single (versal) $G$-torsor $X \to \Spec(K)$ such that $\ind(X) = t(G)$. The invariant $t_2(G)$ is defined in a similar manner, as the least common multiple of $\ind(Y)$, over the $G$-torsors $Y \to \Spec(k_r)$. 
It is natural to ask if a similar assertion is 
true for $t_2(G)$. In other words, is there an integer 
$r \geqslant 0$ and a $G$-torsor $Y \to \Spec(k_r)$ such that
$\ind(Y) = t_2(G)$?

For simplicity let us assume that $\Char(k) = 0$. By Lemma~\ref{lem.serre_quest},  
$Y \simeq E*_A G$ for some finite abelian $k$-subgroup $A \subset G$ and some connected $A$-torsor $E \to \Spec(k_r)$. By Lemma~\ref{lem.index_equals_depth}(2), $\depth(A) = \ind(Y)$. 
In other words, our question can be rephrased as follows: is there 
a finite abelian subgroup $A \subset G$ of depth $t_2(G) = t_3(G)$?

Examining the proof of Theorem~\ref{thm.torsion-b},
we see that when $G$ is an exceptional simple group of type $E_8$ and $\Char(k) = 0$ the answer is ``no''. Indeed, in this case, $t_2(G) = 60$ but $\depth(A) \leqslant 12$ for every finite abelian subgroup $A \subset G$.
\end{remark}

\section{Depths of the maximal finite abelian subgroups
of \texorpdfstring{$E_8$}{E8}}
\label{sect.conclusion}

In this section we prove Proposition~\ref{prop.torsion-d}. Let $k$ be a field of characteristic zero, and let $G$ be a semisimple $k$-group of type $E_8$. Recall from \cite{draper2017maximal} that a maximal finite abelian subgroup $A \subset G$ is determined up to conjugacy by its isomorphism type. 

\subsection{Case (a)} 
We may take $A$ to be the subgroup generated by the 2-torsion subgroup $T[2]$ of a maximal torus $T \subset G$ and an element of the normalizer $N(T)$ which acts on $T$ via multiplication by $-1$. Indeed, $A \cong \mu_2^9$ and $C_G(A) = A$; see Theorem 2.17 and the table on p. 258 in~\cite{griess1991elementary}. Therefore, $A$ is maximal. Hence $\depth(A) \, | \, [A: T[2]] = 2$. Since $A$ is not toral, we have equality, $\depth(A) = 2$.

\subsection{Case (b)} 
By \cite[Proposition 5.3]{reichstein-youssin2}, $G$ has a self-centralizing subgroup $A \simeq \mu_2^8$ of depth $4$~\footnote{Note that the definition of depth in \cite{reichstein-youssin2} is slightly different. There the depth of an abelian $p$-subgroup $A \subset G$ is defined as the smallest exponent $e$ such that $A$ has a toral subgroup of index $p^e$. In this paper we say that the depth is $p^e$, rather than $e$. The advantage of the definition of depth in this paper is that it works equally well if $A$ is not a $p$-group.}. Since all maximal finite abelian subgroups of $G$ isomorphic to $\mu_2^8$ are conjugate, Proposition~\ref{prop.torsion-d}(b) follows. 

\subsection{Cases (c) and (d)}
By \cite[Corollary 5.25(c)]{steinberg-torsion} any 
finite abelian subgroup of $G$ of rank $2$ (i.e., generated by $2$ elements) is toral. In particular, 
this tells us that in Case (c) $\mu_6^2 \subset A$ is toral, and thus $\depth(A) \, | \, 6$. In Case (d), $\mu_5^2$ is toral and hence, $\depth(A) \, | \, 5$. Since $A$ is not toral, i.e., 
$\depth(A) \neq 1$, we conclude that $\depth(A) = 5$.

\subsection{Case (e)} We use the correspondence between finite abelian subgroups of spin groups and self-orthogonal binary linear codes introduced in \cite{wood}. Let $\mathbb F_2$ be the field of two elements. Recall that a \emph{linear code of length $n$} is an $\mathbb F_2$-linear subspace $C \subset \bF_2^n$. Elements of $C$ are called \emph{words}. We denote the $i$-th coordinate of a word $w\in C$ by $w_i$. The \emph{Hamming weight} of $w\in \bF_2^n$ is the number of non-zero coordinates
$$\wt(w) = \big{|} \{ 1\leq i\leq n : w_i=1\} \big{|}.$$
We use the notation $(\bF_2^n)_0 \subset \bF_2^n$ for the subspace consisting of all even weight words.
 We regard  $\bF_2^n$ as an inner product space with respect to the standard dot product
$$u\cdot v = \sum_{i=1}^n u_i v_i$$
taking values in $\mathbb F_2$. A code $C\subset \bF_2^n$ is called \emph{self-orthogonal} if $C \subset C^{\perp}$ and \emph{self-dual} if $C = C^{\perp}$.

Let $\Spin_{2n}$ be the spin group associated to the quadratic space $(V,q)$, where $V = k^{2n}$ and  $q(x) = x_1 ^2+\dots +x_{2n}^2$. Let $e_1,\dots,e_{2n}$ be the standard basis of $V$. 
For any vector $v\in (\bF_2^{2n})_0$ let \[e_v\coloneqq \prod_{i=1}^ne_i^{v_i}\in \Spin_{2n}(k),\] where the product is taken in the Clifford algebra of $(V,q)$. For any two vectors $u,v\in (\bF_2^{2n})_0$, we have 
\begin{equation}\label{e.prop_of_e_u_e_v} 
e_v e_u = (-1)^{u \cdot v} e_u e_v \ \text{ and } \ e_v^2 = (-1)^{\wt(v)/2}.
\end{equation}
In particular, any self-orthogonal code $C \subset (\bF_2^{2n})_0$ defines a finite \'etale abelian $k$-subgroup $A_{C}\subset \Spin_{2n}$ by the formula
$$A_C(k) = \langle e_v : v\in C\rangle\subset \Spin_{2n}(k).$$
The following lemma collects the properties of $A_C$ needed for the proof of Proposition~\ref{prop.torsion-d}(e).

\begin{lemma}\label{lem.depth_in_Spin}
    Let $C\subset (\bF_2^{2n})_0$ be a self-orthogonal code with basis $v^1,\dots,v^m \in (\bF_2^{2n})_0$. 
    
   \smallskip
   (1) If there exists $w\in C$ such that $\wt(w)\equiv 2\mod 4$, then $A_C \cong \mu_4\times \mu_2^{m-1}$.
    
   \smallskip
   (2) If $C$ is self-dual and $\wt(w)\neq 2$ for all $w\in C$, then $\mathfrak {so}_{2n}^{A_C} = 0$.   

   \smallskip
   (3) Assume $\{1,\dots,2n\} = \{x_1,y_1\}\cup\dots\cup \{x_{n},y_n\}$ for some numbers $x_1,\dots,y_n$. If we have
   \begin{equation}\label{e.torality_in_spin}
    \text{for any }1\leqslant i\leqslant m\text{ and  }1\leqslant j\leqslant n: \ \ v^i_{x_j} = 1 \iff v^i_{y_j}=1,
\end{equation}
   then the subgroup $A_C$ is toral.
\end{lemma}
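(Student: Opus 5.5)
For all three parts I will work inside the Clifford algebra, using the relations \eqref{e.prop_of_e_u_e_v} together with the fact that $e_{u}e_{v}=\pm e_{u+v}$ for $u,v\in(\bF_2^{2n})_0$.

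For (1), self-orthogonality and \eqref{e.prop_of_e_u_e_v} show that the generators $e_{v^1},\dots,e_{v^m}$ commute. Each satisfies $e_{v^i}^2=\pm1$, and $e_{u}e_{v}=\pm e_{u+v}$, so $A_C(k)$ is generated by the $e_{v^i}$ together with $-1$. There is a surjection $A_C(k)\to C$, $\pm e_v\mapsto v$, with kernel $\{\pm1\}$, so $|A_C(k)|=2^{m+1}$. Now choose the basis so that $v^1=w$ has $\wt(w)\equiv 2 \pmod 4$. Then $e_w^2=-1$, so $e_w$ has order $4$ and $-1\in\langle e_w\rangle$. For every other $v^i$ with $\wt(v^i)\equiv 2\pmod 4$, replace it by $v^i+w$. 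Its weight is $\wt(v^i)+\wt(w)-2|v^i\cap w|$, and $|v^i\cap w|$ is even by self-orthogonality, so the new weight is $\equiv 0\pmod 4$. Hence $e_{v^i+w}$ has order $2$ (replacing it by $-e_{v^i+w}$ if necessary). The resulting group is $\langle e_w\rangle\times\langle e_{v^2}\rangle\times\dots$, which is $\mu_4\times\mu_2^{m-1}$ by an order count.

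For (2), identify $\mathfrak{so}_{2n}$ with the span of the $e_ie_j$, $i<j$, inside the Clifford algebra, with $A_C$ acting by conjugation. Since $e_v e_ie_j e_v^{-1}=\pm e_ie_j$, each $e_ie_j$ spans a character line for $A_C$, and distinct pairs give linearly independent lines. Therefore the invariants are spanned by those $e_ie_j$ that commute with every $e_v$, $v\in C$. By \eqref{e.prop_of_e_u_e_v}, this happens exactly when the weight-$2$ word $u=\epsilon_i+\epsilon_j$ satisfies $u\cdot v=0$ for all $v\in C$, i.e.\ $u\in C^\perp=C$. No such $u$ exists by hypothesis, so the invariants vanish. (This only uses that conjugation by $e_v\in\Spin$ acts on $\mathfrak{so}$ through the adjoint action, which is standard.)

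For (3), consider the elements $f_j\coloneqq e_{x_j}e_{y_j}$ for $j=1,\dots,n$. They pairwise commute, since they are disjoint products of two anticommuting generators. Condition \eqref{e.torality_in_spin} says that each $v^i$ is a sum of the words $\epsilon_{x_j}+\epsilon_{y_j}$, so each $e_{v^i}$ equals $\pm\prod_{j} f_j^{\,\cdot}$. Hence $A_C$ lies in the group generated by $-1$ and the $f_j$. The $f_j$ lie in a common maximal torus of $\Spin_{2n}$: the $f_j$ are the elements $\exp(\tfrac{\pi}{2}e_{x_j}e_{y_j})$, i.e.\ rotation by $\pi$ in the plane $\langle e_{x_j},e_{y_j}\rangle$. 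These planes are orthogonal, and the lift to $\Spin$ of the product of $\SO_2$'s on them is a maximal torus $T$ containing every $\cos\theta+\sin\theta\, e_{x_j}e_{y_j}$, including $-1$ and $f_j$. So $A_C\subset T$.

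The main obstacle is the bookkeeping in (1): choosing the modified basis so that the orders come out right and the product decomposition is genuinely direct.
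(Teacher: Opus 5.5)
Your proposal is correct and follows essentially the same route as the paper. In (1) your replacement $v^i\mapsto v^i+w$ is exactly the paper's choice of a basis adapted to the homomorphism $\eps(w)=(-1)^{\wt(w)/2}$, and your count $|A_C(k)|=2^{m+1}$ fills in a step the paper leaves implicit. In (2) you run the same character-line argument with bivectors $e_ie_j$ in place of $E_{ij}-E_{ji}$, and in (3) you use the paper's observation that $A_C$ lies in the preimage in $\Spin_{2n}$ of the block-diagonal torus $\SO_2^n$ (though over a general algebraically closed field you should describe that torus by elements $a+b\,e_{x_j}e_{y_j}$ with $a^2+b^2=1$ rather than via $\exp$ and $\cos\theta+\sin\theta\,e_{x_j}e_{y_j}$).
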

\begin{proof}
    (1) One checks that the function $$\eps\colon C\to \mu_2,\quad w\mapsto (-1)^{\wt(w)/2}$$ is a homomorphism using \eqref{e.prop_of_e_u_e_v}. The kernel of $\eps$ consists of all $w\in C$ such that $\wt(w) \equiv 0\mod 4$. By assumption, $\eps$ is surjective, so $\dim_{\bF_2}(\ker\eps) = m-1$. Therefore, up to a change of basis, we may assume that $\wt(v^1)\equiv 2 \mod 4$ and $\wt(v^2),\dots,\wt(v^m)$ are divisible by $4$.  Then $e_{v^1}$ is of order $4$ and $e_{v^2},\dots,e_{v^m}$ are of order $2$ by  \eqref{e.prop_of_e_u_e_v}. Therefore $A_C\cong \mu_4\times \mu_2^{m-1}$.

    \smallskip
    (2) Let $E_{i,j}$ ($1\leqslant i,j\leqslant 2n$) be the standard basis of $M_{2n}(k)$. The Lie algebra $\mathfrak{so}_{2n}$ of skew-symmetric $2n \times 2n$-matrices  decomposes as a direct sum of $1$-dimensional $A_C$-eigenspaces \begin{equation}\label{e.eigen_decomposition_so_n}
    \mathfrak{so}_{2n} = \bigoplus_{1\leqslant i<j\leqslant 2n} \Span_k (E_{ij}- E_{ji}).
    \end{equation}
    For any word $w\in C$, we compute
    $$e_{w}(E_{ij}- E_{ji})e_w^{-1}  = (-1)^{w_i +w_j}(E_{ij}- E_{ji}).$$
    If $E_{ij}- E_{ji}$ is fixed by $A_C$, then $w_i+w_j = 0$ for all $w\in C$. In other words, we have $u^{ij} \in C^{\perp}$, where $u^{ij}\in \bF_2^{2n}$ is the vector whose only non-zero coordinates are the $i$-th and $j$-th coordinates. However, $u^{ij} \not\in C^{\perp}$ because $\wt(u^{ij}) = 2$ and $C = C^{\perp}$ by assumption. Therefore $A_C$ does not act trivially on any of the eigenspaces $\Span_k (E_{ij}-E_{ji})$. We conclude that $\mathfrak{so}_{2n}^{A_C} = 0$ from \eqref{e.eigen_decomposition_so_n}. 

    \smallskip
    (3) Up to relabeling the coordinates, we may assume $x_j = 2j-1$ and $y_j = 2j$ for all $1\leqslant j\leqslant n$. We denote the standard block-diagonal torus of $\SO_{2n}$ by 
$$ T = \begin{pmatrix}
     \SO_2 & 0 & 0 & \ldots & 0 \\
     0 & \SO_2 & 0 & \ldots & 0 \\
     \vdots & \vdots & \vdots & \ldots & \vdots \\
       0 & 0 & 0 & \ldots & \SO_2 
 \end{pmatrix} \simeq \SO_2^n \simeq \mathbb G_m^n.$$   
(Recall that $\SO_2 \simeq \mathbb G_m$ is a $1$-dimensional torus). Let $\pi\colon \Spin_{2n}\to \SO_{2n}$ be the covering map described in \cite[Page 279]{wood}. For all $1\leqslant i\leqslant m$, we have $$\pi(e_{v^i}) = \diag((-1)^{v^i_1},\dots,(-1)^{v^i_{2n}}).$$
    Therefore \eqref{e.torality_in_spin} implies $\pi(e_{v^i}) \in T$ for all $i$. In particular, $A_C$ is contained in the subgroup $\pi^{-1}(T)\subset \Spin_{2n}$, which is a torus by \cite[Proposition 11.14]{borel_lag}.
\end{proof}

\begin{proof}[Proof of Proposition~\ref{prop.torsion-d}(e)]
 Let $C \subset (\bF_2^{16})_0$ be the code generated by the rows of the following $8$-by-$16$ matrix
 \setcounter{MaxMatrixCols}{20}
 $$\begin{pmatrix}
     1 & 1 &1 &1& 0 & 0 & 0 & 0 & 0 & 0 & 0 & 0 & 0 & 0 & 0 & 0 \\
     0&0& 1 & 1 &1 &1&  0 & 0 & 0 & 0 & 0 & 0 & 0 & 0 & 0 & 0 \\
     0 & 0 & 0 & 0 & 1 & 1 &1 &1&  0 & 0 & 0 & 0 & 0 & 0 & 0 & 0 \\
     0 & 0 & 0 & 0 & 0 & 0 & 0 & 0 & 1 & 1 &1 &1& 0 & 0 & 0 & 0\\
      0 & 0 & 0 & 0 & 0 & 0 & 0 & 0 &0&0& 1 & 1 &1 &1&  0 & 0 \\
      0 & 0 & 0 & 0 & 0 & 0 & 0 & 0 & 0 & 0 & 0 & 0 & 1 & 1 &1 &1 \\
       1 & 0 & 1 & 0 & 1 & 0 & 1 & 0 & 0 & 0 & 0 & 0 & 0 & 0 &1 &1 \\
        1 & 1 & 0 & 0 & 0 & 0 & 0 & 0 & 0 & 1 & 0 & 1 & 0 & 1 &0 &1 
 \end{pmatrix}$$
 Then $C$ is an $8$-dimensional self-dual code containing no words of weight $2$; see \cite[Table 2]{pless1972classification} where this code is denoted $F_{16}$. Let $C_{2,8} \subset C$ be the subcode generated by all rows of the above matrix except for the second and eighth rows. Since $C$ and $C_{2,8}$ contain words of weight $6$, Lemma~\ref{lem.depth_in_Spin}(1) implies
 $$ A_C \cong \mu_4 \times \mu_2^7, \ \text{ and }\ A_{C_{2,8}} \cong \mu_4\times \mu_2^5.$$
 The rows generating $C_{2,8}$ satisfy \eqref{e.torality_in_spin} with respect to the partition $$\{1,\dots,16\}=\{1,3\}\cup\{2,4\}\cup\{5,7\}\cup\{6,8\}\cup\{9,10\}\cup\{11,12\}\cup\{13,14\}\cup\{15,16\}.$$ 
 Therefore $A_{C_{2,8}}$ is toral by Lemma~\ref{lem.depth_in_Spin}(3). Since $A_{C_{2,8}}\subset A_C$, we have
 $$\depth(A_C) \mid [A_C:A_{C_{2,8}}] = 4.$$
 Recall that $\HSpin_{16}$ is the quotient $ \Spin_{16}/\langle e_{\mathbf{1}}\rangle,$ where $\mathbf{1}\in (\bF_2^{16})_0$ is the vector with all coordinates equal to one.
There exists a morphism $\Phi\colon \Spin_{16} \to G$ such that $\ker\Phi = \langle e_{\mathbf{1}}\rangle$ and  $\HSpin_{16} \cong \im\Phi=C_G(\Phi(-1))$; see \cite[Example 4.3]{garibaldi2016E8}. Consider the subgroup
$$A  = \Phi(A_C) \subset G.$$
Then $\depth(A) \mid \depth(A_C) \mid 4$, so it suffices to prove:

\smallskip (1) $A\cong \mu_4\times \mu_2^6$  and

\smallskip (2) $A \subset G$ is a maximal finite abelian subgroup. 

\smallskip\noindent
Since the vector $\mathbf{1} \in (\bF_2^{16})_0$ is orthogonal to $C$ and $C$ is self-dual, we have $\mathbf{1}\in C$. The quotient $A_C/\langle e_{\mathbf{1}}\rangle$ contains an element of order $4$ because $e_w^2 \neq e_{\mathbf{1}}$ for any $w\in C$. This proves (1) because 
$$A \cong A_C/\langle e_{\mathbf{1}}\rangle \cong \mu_4\times\mu_2^6.$$
Since $\im\Phi=C_G(\Phi(-1))$, $\Phi$ induces an isomorphism
$$\mathfrak e_8^{\Phi(-1)} \cong \mathfrak{so}_{16},$$
such that $\Spin_{16}$ acts on $\mathfrak e_8^{\Phi(-1)} \cong \mathfrak{so}_{16}$ via the adjoint representation. The element $\Phi(-1)$ is in $A = \Phi(A_C)$ because $e_w^{2} = -1$ for any weight-$6$ word $w\in C$. Therefore 
$$\mathfrak e_8^{A} = (\mathfrak e_8^{\Phi(-1)} )^{A} \cong \mathfrak{so}_{16}^{A_C} = \{0\},$$
where the last equality follows from Lemma~\ref{lem.depth_in_Spin}(2). Since $\mathfrak e_8^{A}$ is the Lie algebra of $C_G(A)$, we conclude that $C_G(A)$ is finite. Therefore $A \subset \tilde{A}$ for some maximal finite abelian subgroup $\tilde{A} \subset G$. Referring to \cite[Theorem 1.1]{draper2017maximal}, we see that any maximal finite abelian subgroup of $G$ containing $\mu_4\times \mu_2^6$ must be isomorphic to $\mu_4\times \mu_2^6$. Therefore $A = \tilde{A}$ is maximal. This shows that (2) holds, finishing the proof.  
\end{proof}

\subsection{Case (f)} The following lemma will be used in the proofs of parts (f) and (g) of Proposition~\ref{prop.torsion-d}.

\begin{lemma} \label{lem.lie-algebra} Let $H$ be a semisimple
algebraic group over a field $k$ of characteristic $0$. Let  
$A$ be a finite abelian subgroup of $H$, and let $A_0$ be a subgroup of $A$. Assume that 

\begin{itemize}
\item
the centralizer $C_H(A)$ is finite,

\item the quotient $A/A_0$ is cyclic, and

\item $\dim_k(\mathfrak h^{A_0}) \geqslant \rank(H)$. Here $A_0$ acts on the Lie algebra $\mathfrak h$ of $H$ via the adjoint action,
and $\mathfrak h^{A_0}$ denotes the subalgebra of elements fixed by $A_0$. 
\end{itemize}

Then $A_0$ is toral in $H$. 
\end{lemma}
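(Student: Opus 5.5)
The plan is to pass to the centralizer of $A_0$ and exploit a generator of the cyclic quotient $A/A_0$. Set $M \coloneqq C_H(A_0)^0$. Since $\Char(k)=0$ and $A_0$ is a finite group of semisimple elements, $M$ is reductive by \cite[Corollary 2.18(a)]{steinberg-torsion}. Its Lie algebra is $\mathfrak h^{A_0}$, so the third hypothesis gives $\dim M \geqslant \rank(H)$. The goal is to produce a maximal torus of $H$ inside $M$ that contains $A_0$. Because $A_0$ is central in $C_H(A_0)$, any maximal torus of $M$ will do, provided it contains $A_0$ and is a maximal torus of $H$.

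The first main step is to show that $\rank(M) = \rank(H)$. Choose $a \in A$ whose image generates $A/A_0$. Then $A = \langle A_0, a\rangle$. The element $a$ normalizes $M$, since it commutes with $A_0$, and so $a$ acts on $M$ by an automorphism of finite order. Moreover, $C_M(a) \subset C_H(A)$, and the latter is finite by hypothesis, so $C_M(a)$ is finite. By \cite[Theorem 5.16]{springer-steinberg}, $a$ preserves a Borel subgroup $B_M$ of $M$ and a maximal torus $S \subset B_M$. The connected group $(C_M(a))^0$ is trivial. I would then argue that $a$ acts on $S$ with only finitely many fixed points, so it acts on $X(S)\otimes\mathbb Q$ without eigenvalue $1$. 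I would also argue that $a$ permutes the roots of $M$ with respect to $B_M$ without fixed vectors in the relevant sums.

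The key point is to use this to compare dimensions. A quasi-semisimple automorphism with finite fixed group forces the root spaces to be permuted freely in a way that bounds $\dim M$. I expect the cleanest route to be that $\mathfrak m^{a}=0$, which follows from the finiteness of $C_M(a)$ and characteristic $0$. The Lefschetz/trace count then says the Lie algebra $\mathfrak m$ carries no trivial $\langle a\rangle$-eigenvector. This step is the main obstacle: turning "$\dim M \geqslant \rank H$ plus $a$ acting with finite centralizer" into "$M$ contains a maximal torus of $H$".

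For that step I would first try the following. Consider the torus $S$ above and let $T \supset S$ be a maximal torus of $H$ contained in $C_H(S)$. The set $T \cap C_H(A_0)$ has identity component containing $S$. If $\rank(M) < \rank(H)$, then $C_H(S)$ is a Levi-type reductive group of positive semisimple rank containing $A_0$ and $a$, and I would try to contradict finiteness of $C_H(A)$ using the central torus of $C_H(S)^0\cap M$. If that fails, I would instead bound $\dim \mathfrak m$ from above by $\rank(M)$ when $\mathfrak m^a = 0$ and $a$ acts by a regular-type automorphism. The idea is that $\mathfrak s^a=0$, and each $a$-orbit of roots contributes no fixed vector, which forces the roots to be "paired" with the torus. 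Combined with $\dim M \geqslant \rank(H)$, this would give $\rank(M) = \rank(H)$. Granting this, $S$ is a maximal torus of $H$, and $A_0 \subset Z(C_H(A_0)) \cap M \subset S$, as $S$ is its own centralizer in $H$. Hence $A_0$ is toral.
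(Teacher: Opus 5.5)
There is a genuine gap, and it sits at the step you flag as the main obstacle. What you need is that $M=C_H(A_0)^0$ is a torus, equivalently that $\mathfrak h^{A_0}$ is abelian. Without this, $\dim M\geqslant\rank(H)$ says nothing about $\rank(M)$, and neither of your routes supplies it.

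\begin{itemize}
\item In the first route, $a$ only normalizes $S$, so it need not lie in $C_H(S)$. The appeal to ``the central torus of $C_H(S)^0\cap M$'' is not an argument.
\item In the second route, the observation that $a$-orbits of root spaces contribute no fixed vectors is harmless. For conjugation by $\diag(\zeta,\zeta^{-1})$ on $\mathfrak{sl}_2$ with $\zeta^2=-1$, both root spaces are negated while the Cartan subalgebra is fixed. So such considerations give no bound $\dim\mathfrak m\leqslant\rank(M)$; the obstruction has to come from the Cartan part.
\item Your claim that $a$ permutes the positive roots ``without fixed vectors in the relevant sums'' is false whenever $M$ is not a torus. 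If $a$ stabilizes $B_M$ and $S$, it permutes $\Phi^+(M,S)$. Hence $\sum_{\alpha\in\Phi^+(M,S)}\alpha$ is an $a$-fixed vector of $X(S)\otimes\mathbb Q$, and it is nonzero as soon as $\Phi^+\neq\emptyset$.
\end{itemize}

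That last observation is exactly what closes the gap. You also correctly note that $a$ has no eigenvalue $1$ on $X(S)\otimes\mathbb Q$, because $S^a\subset C_M(a)\subset C_H(A)$ is finite. Combining the two forces $\Phi(M,S)=\emptyset$, i.e.\ $M=S$ is a torus. Then $\rank(H)\leqslant\dim M=\rank(M)\leqslant\rank(H)$, so $S$ is a maximal torus of $H$ and $A_0\subset C_H(S)=S$.

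A smaller point: $A_0\subset M$ is not known a priori. So the final inclusion should be justified by $A_0\subset C_H(S)=S$, not by ``$A_0\subset Z(C_H(A_0))\cap M$''.

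The paper makes the same argument at the Lie algebra level. There $\mathfrak s=[\mathfrak h^{A_0},\mathfrak h^{A_0}]$ is a $\theta$-stable semisimple ideal. By Borel--Mostow, a semisimple automorphism of a nonzero semisimple Lie algebra has nonzero fixed vectors, so $\mathfrak s\neq 0$ would give $0\neq\mathfrak s^\theta\subset\mathfrak h^A=0$. Hence $C_H(A_0)^\circ$ is a torus of dimension $\geqslant\rank(H)$, so it is a maximal torus $T$, and $A_0\subset C_H(T)=T$.
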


\begin{proof} Since $A/A_0$ is cyclic, we can write $A = \langle A_0, \theta \rangle$ for some $\theta \in A$.
    
    We claim that $\mathfrak h^{A_0}$ is abelian. Indeed, assume the contrary: $\mathfrak s \coloneqq  [\mathfrak h^{A_0},\mathfrak h^{A_0}] \neq 0$. 
    Since $\mathfrak h^{A_0}$ is reductive, $\mathfrak s$ is semisimple.  Moreover, $\mathfrak s$ is $\theta$-invariant because $A_0$ commutes with $\theta$.
    By a theorem of Borel and Mostow~\cite[Proposition~4.3]{borel1955semi-simple}, 
    \[ 0 \neq {\mathfrak s}^\theta \subset (\mathfrak h^{A_0})^\theta = \mathfrak h^A, \]
    On the other hand, by our assumption $C_H(A)$ is finite and hence, its tangent space $\mathfrak h^A = 0$. This contradiction shows that $\mathfrak s=0$, and hence proves the claim. 

      Since $A_0$ is diagonalizable, its connected centralizer $C_H(A_0)^\circ$ is reductive. Moreover, by the above claim, its Lie algebra $\mathfrak h^{A_0}$ is commutative. Consequently, $C_H(A_0)^\circ$ is reductive and commutative, i.e., $C_H(A_0)^\circ$ is a torus. By our assumption, 
       \begin{equation} \label{e.maximal-torus}
       \dim  C_H(A_0) = \dim (\mathfrak h^{A_0}) \geqslant \rank(H) \, . 
       \end{equation}
        Since $\rank(H)$ is, by definition, the dimension of a maximal torus in $H$, we conclude that the inequality in~\eqref{e.maximal-torus} is an equality, and
        $T = C_H(A_0)^\circ$ is a maximal torus of $H$. Therefore
        $A_0 \subset C_H(T) = T$, and the lemma follows.
\end{proof}

\begin{proof}[Proof of Proposition~\ref{prop.torsion-d}(f)] 
The adjoint action
of the maximal finite abelian subgroup $A \simeq \mu_3^5$ decomposes the 248-dimensional Lie algebra $\mathfrak g$ of $G$ as a direct sum of character spaces
\[ \mathfrak g_{\chi_0}, \mathfrak  g_{-\chi_0}, \mathfrak g_{\chi_1}, \mathfrak g_{\chi_2}, 
\ldots ,\mathfrak g_{\chi_{240}}, \]
where $\chi_0$, $- \chi_0$, and $\chi_i$, $i = 1, 2, \ldots, 240$ are the $242$ non-trivial characters of $A$,
$\dim(\mathfrak g_{\chi_0}) = \dim (\mathfrak g_{-\chi_0}) = 4$ and $\dim(\mathfrak g_{\chi_i}) = 1$ for every $i = 1, 2, \ldots, 240$. For details, see Proposition 6.30(2) in \cite{elduque-kochetov13} and the paragraph preceding this proposition.

Let $A_0$ be the kernel of the character
$\chi_0 \colon A \to \mu_3$. Then $\mathfrak{g}^{A_0} = \mathfrak g_{\chi_0} \oplus \mathfrak g_{-\chi_0}$
is an $8$-dimensional $k$-vector space. By Lemma~\ref{lem.lie-algebra},
$A_0$ is toral. Hence, $\depth(A) \, | \, [A:A_0] = 3$. Since $\depth(A) \neq 1$ (i.e., $A$ is not toral), we conclude that $\depth(A) = 3$, as desired.
\end{proof}

\subsection{Case (g)}
We identify $G$ with $\Aut(\mathfrak g)$, where $\mathfrak g =\mathfrak e_8$ is the Lie algebra of $G$. We recall the setup of \cite[Section 5, p.~8]{draper2017maximal}. Let $\theta$ be an order $4$ automorphism of $\mathfrak g$ of type I. Then $\theta$ induces a $\mathbb Z/4\mathbb Z$-grading
\[
        \mathfrak g
        =
        \mathfrak g_{0}\oplus
        \mathfrak g_{1}\oplus
        \mathfrak g_{2}\oplus
        \mathfrak g_{3},
        \qquad
        \mathfrak g_{r}
        =
        \{x\in\mathfrak g\mid \theta(x)=i^r x\} \quad(0\leq r\leq 3),
\]
where $i$ is a primitive fourth root of unity. We have
\begin{equation}
\label{eq:DE-type-I}
\begin{aligned}
        \mathfrak g_{0} &= \mathfrak{sl}(U)\oplus \mathfrak{sl}(V),\\
        \mathfrak g_{1} &= U\otimes \wedge^2 V,\\
        \mathfrak g_{2} &= \wedge^4 V,\\
        \mathfrak g_{3} &= U\otimes \wedge^6 V,
\end{aligned}
\end{equation}
where $U$ and $V$ are $k$-vector spaces of dimensions $2$ and $8$, respectively. We also have homomorphisms
\[
        \Phi\colon \SL(U)\times \SL(V)\longrightarrow C_G(\theta),
        \qquad
        (a,b)\longmapsto \phi_{a,b},
\]
and
\[
        \Psi\colon C_G(\theta)\longrightarrow \Aut(\mathfrak g_0),
        \qquad
        \varphi\longmapsto \varphi|_{\mathfrak g_0},
\]
and moreover for all $(a,b)\in \SL(U)\times \SL(V)$
\[
        \phi_{a,b}|_{\mathfrak g_0}=(\Ad a,\Ad b),
        \qquad
        \phi_{a,b}|_{\mathfrak g_1}=a\otimes \wedge^2 b.
\]
In particular, $\theta=\phi_{1_U,\zeta_81_V}$, where $\zeta_8^2=i$. By \cite[Lemma 5.1]{draper2017maximal}, the homomorphism $\Phi$ is surjective with kernel $\langle(-1_U,i1_V)\rangle\simeq \mathbb Z/4\mathbb Z$. 

Define
\begin{equation}
\label{eq.DE_Q_type_I}
        Q=
        \left\langle
        \phi_{a_1,b_1},
        \phi_{a_2,b_2},
        \phi_{1_U,c_1},
        \phi_{1_U,c_2},
        \phi_{1_U,\zeta_81_V}
        \right\rangle\subset G,
\end{equation}
where $a_1,a_2\in \SL(U)$ and $b_1,b_2,c_1,c_2\in \SL(V)$ are chosen such that
\begin{equation}
\label{eq.DE_type_I_relations}
\begin{gathered}
        a_i^2=-1_U,\qquad b_i^4=-1_V,\qquad c_i^2=-1_V
        \quad (i=1,2),\\
        b_1b_2=ib_2b_1,\qquad
        c_1c_2=-c_2c_1,\qquad
        b_i c_j=c_j b_i.
\end{gathered}
\end{equation}
By \cite[Theorem 5.2]{draper2017maximal}, up to replacing $A$ by a conjugate $k$-subgroup, we may assume that $A=Q$. Again by \cite[Theorem 5.2]{draper2017maximal}, we have $Q\simeq \mu_4^3\times \mu_2^2$.

For every subset $S\subset \End(V)$, let $\alg\langle S\rangle\subset \End(V)$ be the associative $k$-subalgebra of $\End(V)$ generated by $S$. From \cite[Proof of Theorem 5.2]{draper2017maximal}\footnote{More specifically, see the last three paragraphs of the proof.}, 
we may write
\[
        \End(V)=\alg\langle b_1,b_2\rangle\otimes \alg\langle c_1,c_2\rangle,
        \qquad
        \alg\langle b_1,b_2\rangle\simeq M_4(k),\qquad \alg\langle c_1,c_2\rangle\simeq M_2(k).
\]
Define
\[
        A_0 =
        \left\langle
        \phi_{a_1,b_1},
        \phi_{a_2,b_2},
        \phi_{1_U,c_1},
        \phi_{1_U,c_2}
        \right\rangle
        \subset Q.
\]
Recalling that $\theta=\phi_{1_U,\zeta_81_V}$, we see that $Q=\langle A_0,\theta\rangle$. We have $[Q: A_0]=4$. Therefore, in order to complete the proof
of Proposition~\ref{prop.torsion-d}(g), it suffices to show that $A_0$ is toral. In view of Lemma~\ref{lem.lie-algebra}, it is enough to establish the following

\begin{claim}\label{claim:dim-gB-8}
    We have $\dim(\mathfrak g^{A_0})=8$.
\end{claim}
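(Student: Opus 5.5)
The plan is to compute $\mathfrak g^{A_0}$ one graded piece at a time. Every generator of $A_0$ lies in $C_G(\theta)$, so $A_0$ preserves each $\mathfrak g_r$, and
\[
\mathfrak g^{A_0}=\mathfrak g_0^{A_0}\oplus\mathfrak g_1^{A_0}\oplus\mathfrak g_2^{A_0}\oplus\mathfrak g_3^{A_0}.
\]
I would then use the explicit descriptions \eqref{eq:DE-type-I} and the formulas for $\phi_{a,b}$ to compute each summand by linear algebra on $U$ and $V$.

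\emph{Degree $0$.} Since $\phi_{a,b}|_{\mathfrak g_0}=(\Ad a,\Ad b)$, the space $\mathfrak g_0^{A_0}$ is the sum of two centralizers. The first is the centralizer of $a_1,a_2$ in $\mathfrak{sl}(U)$. The second is the centralizer of $b_1,b_2,c_1,c_2$ in $\mathfrak{sl}(V)$.
\begin{itemize}
\item The $b_i,c_j$ generate $\alg\langle b_1,b_2\rangle\otimes\alg\langle c_1,c_2\rangle=\End(V)$. So their centralizer in $\End(V)$ is the scalars, and it meets $\mathfrak{sl}(V)$ trivially.
\item For $U$, I first check that $a_1a_2=-a_2a_1$. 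This is forced by $A_0$ being abelian: on $\mathfrak g_1$ the commutator of $\phi_{a_1,b_1}$ and $\phi_{a_2,b_2}$ is $[a_1,a_2]\otimes\wedge^2(i\cdot 1_V)=-[a_1,a_2]$, so the group commutator $[a_1,a_2]$ must act on $U$ as $-1$. Hence $a_1,a_2$ generate $M_2(k)$, and the centralizer in $\mathfrak{sl}(U)$ is $0$.
\end{itemize}
So $\mathfrak g_0^{A_0}=0$.

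\emph{Degrees $1,2,3$.} Here I would use characters. Let $\Gamma\subset \SL(U)\times\SL(V)$ be the (finite) preimage of $A_0$ under $\Phi$, generated by $(a_1,b_1),(a_2,b_2),(1,c_1),(1,c_2)$ together with $\ker\Phi=\langle(-1_U,i1_V)\rangle$. Then
\[
\dim\mathfrak g_r^{A_0}=\frac1{|\Gamma|}\sum_{(a,b)\in\Gamma}\chi_r(a,b),
\]
where the characters of $\mathfrak g_1=U\otimes\wedge^2V$, $\mathfrak g_2=\wedge^4V$ and $\mathfrak g_3=U\otimes\wedge^6V$ are
\[
\chi_1=\operatorname{tr}(a)\operatorname{tr}(\wedge^2 b),\qquad
\chi_2=\operatorname{tr}(\wedge^4 b),\qquad
\chi_3=\operatorname{tr}(a)\operatorname{tr}(\wedge^6 b).
\]
The action on $\mathfrak g_2$ should be $\wedge^4b$, since $\mathfrak g_2=[\mathfrak g_1,\mathfrak g_1]$ and $\wedge^2U$ is trivial for $\SL(U)$; on $\mathfrak g_3$ it is $a\otimes\wedge^6 b$.

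The elements of $\Gamma$ are easy to list. Each is, up to a central scalar, of the form $(a_1^{x}a_2^{y},\,b_1^{x}b_2^{y}c_1^{z}c_2^{w})$. For such an element:
\begin{itemize}
\item $\operatorname{tr}(a)=0$ unless $a=\pm1$, i.e.\ unless $x\equiv y\equiv 0\pmod 2$.
\item The eigenvalues of $b$ on $V=W\otimes X$ are products of the eigenvalues of the factors. Non-central Heisenberg-type elements of $\alg\langle b_1,b_2\rangle$ and $\alg\langle c_1,c_2\rangle$ have eigenvalue multisets that are full cosets of cyclic groups of roots of unity.
\end{itemize}
So $\det(1+tb)$ has a simple closed form, namely a product of factors $(1\pm\lambda^m t^m)$. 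From this I read off $\operatorname{tr}\wedge^2b$, $\operatorname{tr}\wedge^4b$ and $\operatorname{tr}\wedge^6b$. I would organize the sum by the images of $(x,y)$ in $(\bZ/4)^2$ and of $(z,w)$ in $(\bZ/2)^2$, and expect only the classes with $a$ central (and a few others) to contribute. The target is that the three averages add up to $8$. I expect roughly $\dim\mathfrak g_2^{A_0}$ plus a small contribution from $\mathfrak g_1\oplus\mathfrak g_3$, but I would simply let the computation decide the split.

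\emph{Main obstacle.} The hard step is the bookkeeping in degrees $1$--$3$: correctly pinning down the central scalars in $\Gamma$ (including the kernel $\langle(-1_U,i1_V)\rangle$ and the relation $b_i^4=-1$), and the sign of the action on $\mathfrak g_2$. As a consistency check I would verify that the same averaging gives $\dim\mathfrak g^{Q}=0$ once $\theta$ is added. This must hold because $Q$ is maximal with $C_G(Q)$ finite, and adding $\theta=\phi_{1_U,\zeta_8 1_V}$ simply kills $\mathfrak g_1,\mathfrak g_2,\mathfrak g_3$. Alternatively, I could avoid characters on $\mathfrak g_2$ by decomposing $\wedge^4(W\otimes X)$ via Schur functors into $\GL(W)\times\GL(X)$-isotypic pieces and counting invariants of the Heisenberg groups piece by piece.
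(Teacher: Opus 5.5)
Your proposal is essentially the paper's proof. The paper also splits $\mathfrak g^{A_0}$ along the $\theta$-grading and kills $\mathfrak g_0^{A_0}$, in the main text via $\mathfrak g^Q=0$ and in a footnote by exactly your centralizer argument. It then handles $\mathfrak g_1,\mathfrak g_2,\mathfrak g_3$ by averaging $\operatorname{tr}(a)\,[x^m]\det(1+xb)$ over the $64$ representatives $g_{r,s,e,f}$. This is the same as your average over $\Gamma$, since $\langle(-1_U,i1_V)\rangle$ acts trivially on all three pieces; and because $\SL(U)\times\SL(V)$ is connected, the action on $\mathfrak g_2$ is forced to be $\wedge^4 b$, so there is no sign to pin down.

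What your plan leaves undone is the actual count, and that count is the whole content of the claim. Put $B_{r,s}=\zeta^{r+s}P^rS^s$. Then $\det(1+xB_{r,s})$ equals
\begin{itemize}
\item $(1+x)^4$ for $(r,s)=(0,0)$;
\item $(1+x^2)^2$ for $(2,0)$ and $(0,2)$;
\item $(1-x^2)^2$ for $(2,2)$;
\item $1+x^4$ for the twelve remaining classes.
\end{itemize}
This gives $\dim\mathfrak g_1^{A_0}=\dim\mathfrak g_3^{A_0}=2$ and $\dim\mathfrak g_2^{A_0}=4$, hence $0+2+4+2=8$.

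Your proposed sanity check $\mathfrak g^Q=0$ cannot catch errors in these three degrees. The element $\theta$ acts on $\mathfrak g_r$ ($r\neq 0$) by the scalar $i^r\neq 1$, so that check only tests the degree-$0$ part.
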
 

We now proceed with the proof of Claim~\ref{claim:dim-gB-8}.
As $A_0$ commutes with $\theta$, the decomposition \eqref{eq:DE-type-I} is $A_0$-stable,
and hence
\[
        \mathfrak g^{A_0}
        =
        \mathfrak g_{0}^{A_0}\oplus
        \mathfrak g_{1}^{A_0}\oplus
        \mathfrak g_{2}^{A_0}\oplus
        \mathfrak g_{3}^{A_0}.
\]
By \cite[Section 2, p.~3]{draper2017maximal}, if $Q$ is a finite
maximal quasi-torus of $\Aut(\mathfrak g)$, then the neutral homogeneous
component of the corresponding fine grading is trivial; equivalently, $\mathfrak g^Q=0$ \footnote{Alternatively, one may check that $\mathfrak g^Q= (\mathfrak{sl}(U)\oplus \mathfrak{sl}(V))^Q = 0$ using $\alg\langle a_1,a_2\rangle = \End(U), \alg\langle b_1,b_2\rangle = \End(V_1)$, and $ \alg\langle c_1,c_2\rangle = \End(V_2)$. This gives another way of verifying that $Q$ is a maximal finite abelian subgroup. Indeed, $\mathfrak g^Q =0$ implies $C_G(Q)$ is finite and therefore $Q$ is contained in some maximal finite abelian subgroup $\tilde{Q}$. Since $Q\cong \mu_4^3\times \mu_2^2$, \cite[Theorem 1.1]{draper2017maximal} implies $\tilde{Q}$ has to be equal to $Q$.}. 
In our situation $Q=\langle A_0,\theta\rangle$, and $A_0$ commutes with
$\theta$. Since $\mathfrak g_0=\mathfrak g^\theta$, we get
\begin{equation}\label{eq:g0-inv}
\mathfrak g_0^{A_0} = (\mathfrak g^\theta)^{A_0} = (\mathfrak g^{A_0})^\theta = \mathfrak g^{\langle {A_0},\theta\rangle} = \mathfrak g^Q = 0.
\end{equation}
Following the proof of
\cite[Theorem 5.2, p.~9]{draper2017maximal}, let $\zeta\in k$ be such that $\zeta^2=i$. We may choose
\[
        a_1=\begin{pmatrix}
        i&0\\
        0&-i
        \end{pmatrix},\qquad
        a_2=
        \begin{pmatrix}
        0&1\\
        -1&0
        \end{pmatrix},
\]
and
\[
        b_1=\zeta P\otimes 1_{V_2},
        \qquad
        b_2=\zeta S\otimes 1_{V_2},
        \qquad
        c_1=1_{V_1}\otimes
        \begin{pmatrix}
        i&0\\
        0&-i
        \end{pmatrix},
        \qquad
        c_2=1_{V_1}\otimes
        \begin{pmatrix}
        0&1\\
        -1&0
        \end{pmatrix},
\]
where
\[
        P=\operatorname{diag}(1,i,-1,-i),
        \qquad
        S(e_j)=e_{j+1}
        \quad (j\in \mathbb Z/4\mathbb Z),
\]
for some basis $e_0,e_1,e_2,e_3$ of $V_1$. One easily checks that \eqref{eq.DE_type_I_relations} holds. 

For $r,s\in\{0,1,2,3\}$ and $e,f\in\{0,1\}$, set
\[
        g_{r,s,e,f}
        =
        \phi_{a_1,b_1}^r
        \phi_{a_2,b_2}^s
        \phi_{1_U,c_1}^e
        \phi_{1_U,c_2}^f
        =
        \phi_{a_1^r a_2^s,\,
        b_1^r b_2^s c_1^e c_2^f}.
\]
Since $\Char(k)=0$, for every finite-dimensional ${A_0}$-module $M$,
\begin{equation}\label{eq:tr-dim}
        \dim M^{A_0}
        =
        \frac{1}{64}
        \sum
        \operatorname{tr}(g_{r,s,e,f}\mid M).
\end{equation}
For an endomorphism $T$ of $V$, we use the identity
\[
        \operatorname{tr}(\wedge^m T) = [x^m]\det(1+xT),
\]
where $[x^m]$ denotes the coefficient of $x^m$. Therefore,
\begin{equation}\label{eq:tr1}
        \operatorname{tr}(g_{r,s,e,f}\mid U\otimes\wedge^m V)=\operatorname{tr}(a_1^r a_2^s)\cdot [x^m]\det(1+xb_1^r b_2^s c_1^e c_2^f ),
\end{equation}
whereas
\begin{equation}\label{eq:tr2}
        \operatorname{tr}(g_{r,s,e,f}\mid \wedge^4 V)
        =
        [x^4]\det(
        1+xb_1^r b_2^s c_1^e c_2^f
        ).
\end{equation}

Let $B_{r,s}\coloneqq b_1^r b_2^s|_{V_1}=\zeta^{r+s}P^rS^s$.
Since 
$c_1^e c_2^f$ acts on $V_2$ with eigenvalues $1,1$ for $(e,f)=(0,0)$ and with eigenvalues $i,-i$ for the other three choices of $(e,f)$, we have
\[
\begin{aligned}
        \sum_{e,f}
        \det(1+x\,b_1^r b_2^s c_1^e c_2^f)
        &=
        \det(1+xB_{r,s})^2
        +
        3\det(1+x^2B_{r,s}^2).
\end{aligned}
\]
Set $F_{r,s}(x)\coloneqq\det(1+xB_{r,s})^2+3\det(1+x^2B_{r,s}^2)$, so that the previous equation implies
\[
       \sum_{e,f}
        [x^m]\det(1+x\,b_1^r b_2^s c_1^e c_2^f)=[x^m]F_{r,s}(x).
\]

The characteristic polynomials of the monomial matrices
$B_{r,s}=\zeta^{r+s}P^rS^s$ give the following table:
\[
\begin{array}{c|c|c|c|c|c}
(r,s)&\det(1+xB_{r,s})&\det(1+x^2B_{r,s}^2)&[x^2]F_{r,s}&[x^4]F_{r,s}&[x^6]F_{r,s}\\
\hline
(0,0)&(1+x)^4&(1+x^2)^4&40&88&40\\
(2,0),(0,2)&(1+x^2)^2&(1-x^2)^4&-8&24&-8\\
(2,2)&(1-x^2)^2&(1+x^2)^4&8&24&8\\
\text{all other }(r,s)
&1+x^4&(1+x^4)^2&0&8&0.
\end{array}
\]
Moreover, one immediately sees
\[
        \operatorname{tr}(a_1^r a_2^s)
        =
        \begin{cases}
        2, & (r,s)=(0,0),\\
        -2, & (r,s)=(2,0)\text{ or }(0,2),\\
        2, & (r,s)=(2,2),\\
        0, & \text{otherwise}.
        \end{cases}
\]
Therefore, \eqref{eq:tr1} yields
\[
\sum_{r,s,e,f}\operatorname{tr}(g_{r,s,e,f}\mid U\otimes\wedge^2V)=
2\cdot 40+(-2)(-8)+(-2)(-8)+2\cdot 8=128\]
and
\[
\sum_{r,s,e,f}\operatorname{tr}(g_{r,s,e,f}\mid U\otimes\wedge^6V)=2\cdot 40+(-2)(-8)+(-2)(-8)+2\cdot 8=128,\]
while \eqref{eq:tr2} gives
\[
\sum_{r,s,e,f}\operatorname{tr}(g_{r,s,e,f}\mid \wedge^4V)=
88+24+24+24+12\cdot 8=256.\]
By \eqref{eq:tr-dim}, we deduce that
\[
        \dim \mathfrak g_{1}^{A_0}=\frac{128}{64}=2,\qquad
        \dim \mathfrak g_{2}^{A_0}=\frac{256}{64}=4,\qquad
        \dim \mathfrak g_{3}^{A_0}=\frac{128}{64}=2,
\]
and hence $\dim \mathfrak g^{A_0}=0+2+4+2=8$, as claimed. This completes the proof of Claim~\ref{claim:dim-gB-8} and thus of Proposition~\ref{prop.torsion-d}(g).

\section{Proof of Theorem~\ref{thm.non-toral-obstruction}}
\label{sect.genus1}
   
\begin{definition}	For a prime $p$ and a finite group $\Gamma$, the $p$-rank of $\Gamma$, denoted by $\on{rank}_p(\Gamma)$, is the largest integer $r\geqslant 0$ such that $\Gamma$ has a subgroup isomorphic to $(\mathbb{Z}/p\mathbb{Z})^r$.
\end{definition}

\begin{proof}[Proof of Theorem~\ref{thm.non-toral-obstruction}]
We argue by contradiction. Suppose that there exist a $g$-dimensional abelian variety $B$ over $k_r$ and a $B$-torsor $X$ such that $E*_AG$ becomes split over $k_r(X)$. 

By \cite[Lemma 6.1]{genus1}, there exists a finite extension $K/k_r$ of prime-to-$p$ degree such that the class of $X_{K}$ in the abelian group $H^1(K,B)$ has order $p^e$ for some $e\geqslant 0$. Since $k$ is algebraically closed of characteristic not equal to $p$, it contains a root of unity of order $p^e$. By \cite[Proposition 3.1]{genus1}, there exists a finite Galois extension $L/K$ such that $X(L)\neq \emptyset$ and such that $\Gamma\coloneqq \operatorname{Gal}(L/K)$ fits into an exact sequence of finite groups
\[
        1\longrightarrow N\longrightarrow \Gamma\longrightarrow \Gamma_0
        \longrightarrow 1,
\]
where $N\subset (\mathbb Z/p^e\mathbb Z)^{2g}$ and $\Gamma_0\subset \operatorname{SL}_{2g}(\mathbb Z/p^e\mathbb Z)$. By the subadditivity of the $p$-rank in short exact sequences
		\[\on{rank}_p(\on{Gal}(L/K)) \leqslant \on{rank}_p(\mathrm{SL}_{2g}(\mathbb{Z}/p^e\mathbb{Z}))+\on{rank}_p((\mathbb{Z}/p^e\mathbb{Z})^{2g}),\]
		and hence by \cite[Corollary 4.4]{genus1} we have
		\begin{equation}\label{eq:p-rank-upper-bound}
		\mathrm{rank}_p(\on{Gal}(L/K)) \leqslant
		\begin{cases}
			5g^2 +2g-1 & (p>2,\, g\geqslant 2), \\
			9g^2 + 2g-2 & (p = 2,\, g\geqslant 2), \\
			5 & (p>2,\, g=1), \\
			6 & (p=2,\, g=1).
		\end{cases}
		\end{equation}
Since $E*_AG$ is split by $X$ and $X(L)\neq \emptyset$, the pullback of $E*_AG$ to $\operatorname{Spec}(L)$ is trivial. Thus $L$ is a splitting field of $E*_AG$ over $K$. The extension $K/k_r$ has degree prime to $|A|=p^r$, and $L/K$ is Galois. Therefore, by Theorem~\ref{thm.main}(2) applied to the tower $k_r\subset K\subset L$, there exists a maximal torus $T\subset G$ such that $A/A_T$ is isomorphic to a subgroup of $\Gamma$, where $A_T=A\cap T$. Now the lower bound on $\mathrm{rank}(A/A_T)$ in the statement of Theorem~\ref{thm.non-toral-obstruction} contradicts \eqref{eq:p-rank-upper-bound}.
\end{proof}

Now recall from the beginning of Section~\ref{sect.t(G)} that an algebraic group $G$ defined over $k$ is called special if $H^1(K, G) = 1$ for every field extension $K/k$. We characterized special groups in Corollary~\ref{cor.special}.
As a consequence of Theorem~\ref{thm.non-toral-obstruction}, we obtain the following alternative characterization.

\begin{corollary}\label{cor:special-splitting-genus-one}
   Let $G$ be a reductive $k$-group such that $\Char(k)$ is not a torsion prime of $G$. The following are equivalent:
    \begin{itemize}
        \item[(1)] $G$ is not special;
                \item[(2)] there exist a field extension $K/k$ and seven $G$-torsors $E_1,\dots,E_7$ over $K$ that cannot all be simultaneously split by any genus $1$ curve over $K$.
    \end{itemize}
\end{corollary}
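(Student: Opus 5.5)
Both directions are handled separately, using Corollary~\ref{cor.special} for (2) $\Rightarrow$ (1) and Theorem~\ref{thm.non-toral-obstruction} with $g=1$ for (1) $\Rightarrow$ (2).

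\textbf{(2) $\Rightarrow$ (1).} If $G$ is special, then every $G$-torsor over any $K$ is already split. Hence any genus $1$ curve over $K$ splits $E_1,\dots,E_7$ simultaneously; for instance, take a plane cubic with a rational point. So (2) fails.

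\textbf{(1) $\Rightarrow$ (2).} Assume $G$ is not special. By Corollary~\ref{cor.special}, (f) fails. So there is a prime $p\neq\Char(k)$ and a non-toral elementary abelian $p$-subgroup $A_1\subset G$ of rank $\leqslant 3$.

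The plan is to take seven copies. Set $A=A_1^7$, embedded in $G^7$ via the product of the inclusions. The group $G^7$ is again an algebraic $k$-group. A maximal torus of $G^7$ has the form $T=T_1\times\dots\times T_7$ with $T_j$ maximal tori of $G$, and then $A_T=\prod_j (A_1)_{T_j}$. Since $A_1$ is non-toral, each factor $A_1/(A_1)_{T_j}$ is a nontrivial elementary abelian $p$-group. It therefore has rank $\geqslant 1$, giving $\rank(A/A_T)\geqslant 7$. This meets the bound in Theorem~\ref{thm.non-toral-obstruction} for $g=1$ (it needs $6$ if $p>2$ and $7$ if $p=2$).

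Now let $r=\rank(A)$ and let $E$ be a connected $A$-torsor over $K=k_r$; it exists by \eqref{eq:gal-kr-presentation}. By Theorem~\ref{thm.non-toral-obstruction}, applied to $G^7$ with $g=1$, the $G^7$-torsor $E*_A G^7$ cannot be split by any torsor under an elliptic curve over $k_r$. Every genus $1$ curve is such a torsor, namely a torsor under its Jacobian.

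It remains to translate back to $G$-torsors. There is a natural identification $H^1(K,G^7)=H^1(K,G)^7$. Let $E_j$ be the pushforward of $E*_AG^7$ along the $j$-th projection. Then $E*_AG^7\cong E_1\times\dots\times E_7$. A genus $1$ curve $C$ splits the product if and only if it splits each $E_j$, since triviality over $K(C)$ is checked componentwise. Hence $E_1,\dots,E_7$ cannot be simultaneously split by a genus $1$ curve over $K$.

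The main point needing care is the description of the maximal tori of $G^7$, which is needed to compute $A_T$. A maximal torus of $G^7$ is contained in $\prod_j \mathrm{pr}_j(T)$, which is itself a torus. By maximality, the two are equal.
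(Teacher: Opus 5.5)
Your proof is correct and follows the paper's argument: pass to $A^7\subset G^7$, use that maximal tori of $G^7$ are products of tori of $G$ to get $\rank(A^7/(A^7)_T)\geqslant 7$, apply Theorem~\ref{thm.non-toral-obstruction} with $g=1$, and decompose the resulting $G^7$-torsor into seven $G$-torsors. You also spell out two points the paper leaves implicit, namely why maximal tori of $G^7$ are products and why a genus $1$ curve exists over $K$ in the trivial direction; note only that your opening sentence misattributes Corollary~\ref{cor.special} to (2) $\Rightarrow$ (1), whereas, as your body correctly shows, it is used in (1) $\Rightarrow$ (2).
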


\begin{proof}
(2)$\Longrightarrow$(1). This is clear.

\smallskip

(1)$\Longrightarrow$(2). By Corollary~\ref{cor.special}, there exists a prime $p\neq \operatorname{char}(k)$ and a non-toral elementary abelian $p$-subgroup $A\subset G$ of rank $r\leqslant 3$. Every maximal torus $T$ of $G^7$ is of the form $T = T_1 \times \ldots \times T_7$ for some maximal tori $T_1, \ldots, T_7$ of $G$; in particular, $\mathrm{rank}(A^7/(A^7)_T)\geqslant 7$.

By Theorem~\ref{thm.non-toral-obstruction} applied to the algebraic $k$-group $G^7$ and the abelian subgroup $A^7\subset G^7$ of rank $7r$, there exists a $G^7$-torsor $E$ over $k_{7r}$ which is not split by a genus $1$ curve over $k_{7r}$. Let $E_1,\dots,E_7$ be $G$-torsors over $k_{7r}$ such that $E\cong E_1\times\cdots\times E_7$. If $C/k_{7r}$ is a genus $1$ curve splitting $E_1,\dots,E_7$, then $C$ splits $E$, a contradiction.
\end{proof}

\begin{remark}
    It is not true that a reductive $k$-group $G$ is special if and only if, for every field extension $K/k$, every $G$-torsor over $K$ is split by a genus $1$ curve over $K$. The smallest counterexample is $G=\PGL_2$; see \cite{swets}.
\end{remark}

\begin{remark}
    Let $G$ be a reductive $k$-group, let $p\neq\Char(k)$ be a torsion prime for $G$, and let $A\subset G$ be an elementary abelian $p$-subgroup of $G$. The non-toral $p$-rank of $A$ is the smallest rank of $A/(A\cap T)$, where $T$ ranges over the maximal tori of $G$. It is equal to the depth of $A$ in the sense of \cite[Definition~4.5]{reichstein-youssin2}.

    Table~\ref{tab:RY-elementary-lower-bounds} records some examples of simple $k$-groups $G$ with elementary abelian $p$-subgroups of non-toral $p$-rank $\geqslant 2$.
 \end{remark}
 
\begin{table}[ht]
\centering
\renewcommand{\arraystretch}{1.25}
\begin{tabular}{c|c|c|c}
\hline
$G$ & $p$ & $\exists\, \mu_p^r\simeq A\subset G$ of non-toral $p$-rank $\geqslant$ & Reference \\
\hline
$\PGL_{p^m}$ & $p$ &
$m$ &
\cite[Corollary~7.9]{reichstein-youssin2} \\
\hline
$\SO_n$ & $2$ &
$\left\lfloor (n-1)/2\right\rfloor$ &
\cite[Proof of Proposition~5.2]{reichstein-youssin2} \\
\hline
$E_7^{\mathrm{ad}}$ & $2$ &
$2$ &
\cite[Proposition~5.7]{reichstein-youssin2} \\
\hline
$E_8$ & $2$ &
$2$ &
\cite[Proposition~5.3]{reichstein-youssin2} \\
\hline
\end{tabular}
\caption{Some elementary abelian $p$-subgroups of non-toral $p$-rank $\geqslant 2$.}
\label{tab:RY-elementary-lower-bounds}
\end{table}

 \begin{remark} 
 Suppose that $\Char(k)\neq 2$. In the case, where $G = \SO_n$, Theorem~\ref{thm.non-toral-obstruction} and Table~\ref{tab:RY-elementary-lower-bounds} tells us that for every $n \geqslant 15$ there exist a field extension $K/k$ and a quadratic form $q$ of rank $n$ over $K$ such that $q$ has trivial discriminant and $q$ does not become hyperbolic over the function field of any genus $1$ curve over $K$. 
 
 For $G = E_8$, arguing as in the proof of Corollary~\ref{cor:special-splitting-genus-one}, we see that there exists a field $K/k$ and four $E_8$-torsors over $K$ which cannot be simultaneously split by any genus $1$ curve over $K$.
\end{remark}

\section*{Acknowledgements} Theorem~\ref{thm.t(G)} was conjectured by Burt Totaro and Jiahe Wang. We are grateful to them for sharing this conjecture with us. We also thank Burt Totaro for helpful comments on preliminary drafts of this paper. 


\end{document}